\documentclass[11pt]{amsart}

\usepackage{amsmath,amsthm,amsfonts,amssymb,latexsym,mathrsfs,cite,cases,url,enumerate}

\usepackage{color}
\usepackage[pagebackref]{hyperref}
\usepackage[capitalize]{cleveref}
\usepackage{enumitem}
\usepackage{verbatim}
\usepackage{ifthen}
\usepackage{tikz}
\usepackage{doi}
\usepackage{float}

\usetikzlibrary{calc}
\usetikzlibrary{arrows,automata} 
\usetikzlibrary{matrix,backgrounds}

\def\NN{\mathbb{N}}

\def\Dyck{\mathrm{Dyck}}
\def\half{\mathrm{half}}
\def\SDyck{\mathrm{SDyck}}
\def\rk{\mathrm{rank}}
\def\ff{\mathfrak{f}}
\def\fh{\mathfrak{h}}

\def\ZZ{\mathbb{Z}}
\def\fD{\mathfrak{D}}
\def\rev{\mathrm{rev}}
\def\suc{\mathrm{suc}}
\def\cF{\mathcal{F}}
\allowdisplaybreaks

  \makeatletter
  \newcommand{\numberlike}[2]{%
     \expandafter\def\csname c@#1\endcsname{%
         \expandafter\csname c@#2\endcsname}%
  }
  \makeatother

  \def\DefaultNumberTheoremWithin{section}

  \theoremstyle{plain}
  \newtheorem{lemma}{Lemma}
     \numberwithin{lemma}{\DefaultNumberTheoremWithin}
     \labelformat{lemma}{Lemma~#1}
  \newtheorem{theorem}{Theorem}
     \numberwithin{theorem}{\DefaultNumberTheoremWithin}
     \numberlike{theorem}{lemma}
     \labelformat{theorem}{Theorem~#1}
  \newtheorem*{maintheorem*}{Corollary of Main Theorem}
  \newtheorem{corollary}{Corollary}
     \numberwithin{corollary}{\DefaultNumberTheoremWithin}
     \numberlike{corollary}{lemma}
     \labelformat{corollary}{Corollary~#1}
  \newtheorem{proposition}{Proposition}
     \numberwithin{proposition}{\DefaultNumberTheoremWithin}
     \numberlike{proposition}{lemma}
     \labelformat{proposition}{Proposition~#1}
  \newtheorem{conjecture}{Conjecture}
     \numberwithin{conjecture}{\DefaultNumberTheoremWithin}
     \numberlike{conjecture}{lemma}
     \labelformat{conjecture}{Conjecture~#1}

  \theoremstyle{definition}
  
     \numberwithin{definition}{\DefaultNumberTheoremWithin}
     \numberlike{definition}{lemma}
     \labelformat{definition}{Definition~#1}
  \newtheorem{question}{Question}
     \numberwithin{question}{\DefaultNumberTheoremWithin}
     \numberlike{question}{lemma}
     \labelformat{question}{Question~#1}
  
     \numberwithin{problem}{\DefaultNumberTheoremWithin}
     \numberlike{problem}{lemma}
     \labelformat{problem}{Problem~#1}
   
  \theoremstyle{remark}
  
     \numberwithin{remark}{\DefaultNumberTheoremWithin}
     \numberlike{remark}{lemma}
     \labelformat{remark}{Remark~#1}
  \newtheorem{example}{Example}
     \numberwithin{example}{\DefaultNumberTheoremWithin}
     \numberlike{example}{lemma}
     \labelformat{example}{Example~#1}

     \numberwithin{claim}{\DefaultNumberTheoremWithin}
     \numberlike{claim}{lemma}
     \labelformat{claim}{Claim~#1}
\labelformat{figure}{Figure~#1}
\labelformat{chapter}{Chapter~#1}
\labelformat{appendix}{Appendix~#1}
\labelformat{section}{Section~#1}
\labelformat{subsection}{Subsection~#1}

\title[Simplicial Complexes of Antichains and Combinatorics of Dyck Paths]{Simplicial Complexes of Antichains in Root Posets and Related Combinatorics of Dyck Paths}

\author{Lili Mu} 
\address{School of Mathematics and Statistics, Jiangsu Normal University, Xuzhou 221116, PR China} 
\email{lilimu@jsnu.edu.cn}

\author{Volkmar Welker}
\address{Philipps-Universit\"at Marburg, Fachbereich Mathematik und Informatik, 35032 Marburg, Germany}
\email{welker@mathematik.uni-marburg.de}

\begin{document}

\begin{abstract}
For a crystallographic root system $\fD$ we consider the simplicial complex $\Delta_{\fD}$ of all antichains in the
root poset of $\fD$. We show that $\Delta_{\fD}$ is shellable if
and only if $\fD$ is $A_n$, $B_n$, $D_3$ or $G_2$. 
Since antichains in types $A_n$ and $B_n$ can be identified with 
Dyck paths and symmetric Dyck paths, respectively, this yields a simplicial complex on Dyck paths. Indeed, in type $A_n$, shellability can be extended to rational Dyck paths. The $f$- and $h$-triangles then 
yield statistics on (symmetric/rational) Dyck paths. We determine these statistics for $A_n$ and $B_n$ and leave the case of rational Dyck paths as an open problem.
\end{abstract}

\maketitle

\section{Dyck complexes through Root Poset Antichains}

Let $\fD$ be a crystallographic Dynkin type and $\Phi(\fD)$ be the crystallographic root system of type $\fD$. 
We identify $\fD$ with the corresponding set of simple roots of
$\Phi(\fD)$ and denote by
$\Phi_+(\fD) = \Phi(\fD)
\cap \NN \fD$
the corresponding set of positive roots (for definitions see for example \cite{BB05}).
The root poset of type $\fD$ is the partially ordered set on 
$\Phi_+(\fD)$ where $\alpha \leq \beta$ if and only if
$\beta - \alpha \in \NN \fD$. See \ref{fig:a6} and \ref{fig:b3} for the
root posets of type $A_5$ and $B_3$, respectively. 

We set $$\Delta_\fD = \{\, A \subseteq \Phi_+(\fD)~:~A \text{ is an antichain } \,\}.$$
Since every subset of an antichain is again an antichain it follows that
$\Delta_\fD$ is a simplicial complex. For the Dynkin type $A_n$, the complex
$\Delta_\fD$ has a simple interpretation as a simplicial complex whose simplices are identified with the Dyck paths of semilength $n$ (see \ref{fig:dyck} and the discussion in \ref{sec:a}). An analogous identification with symmetric Dyck paths exists in type
$B$ (see \ref{fig:b3} and the discussion in \ref{sec:b}). For this reason, we call $\Delta_\fD$ the Dyck complex of 
type $\fD$. 
We refer the reader to \cite{Arm09} or \cite{Rin16} for further background on root posets and related topics.

These identifications indicate that a lot of Catalan combinatorics may be encoded in $\Delta_\fD$.
In this work we want to unravel some of these connections.
To do so, we first provide insight into the geometric combinatorial structure of 
$\Delta_\fD$. This will allow us to reveal some of the Catalan combinatorics  of $\Delta_\fD$
through its $f$- and $h$-triangles in \ref{thm:typeA-f}, \ref{thm:typeA-h}, \ref{thm:typeB-f}, and \ref{thm:typeB-h}. 

\begin{theorem} \label{thm:antichain}
    Let $\fD$ be one of the Dynkin types $A_n$, $B_n$, $D_3$, or $G_2$. Then the simplicial complex $\Delta_\fD$ is shellable. For the Dynkin types $D_n$ with $n\geq 4$, $F_4$, $E_6$, $E_7$, and $E_8$, the simplicial complex $\Delta_\fD$ is not shellable.
\end{theorem}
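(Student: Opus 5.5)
We treat the two halves of \ref{thm:antichain} separately: the positive part by explicit shellings, the negative part by finding a link that is not shellable.

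\textbf{Positive part.} The complex $\Delta_\fD$ is the independence complex of the comparability graph of the root poset $\Phi_+(\fD)$. It is flag, but not pure in general: in type $A_n$ the maximal antichains have sizes ranging from $1$ up to the number of simple roots. So we aim for a nonpure shelling in the sense of Bj\"orner--Wachs, using a recursive vertex decomposition.

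For type $A_n$ the root poset is the staircase of intervals $[i,j]$ with $1\le i\le j\le n$, ordered by containment. Antichains are sets of intervals with strictly increasing left and right endpoints. We would show that $\Delta_{A_n}$ is vertex decomposable by shedding the top root $[1,n]$ first.
\begin{itemize}
\item The link of $[1,n]$ is empty, since $[1,n]$ is comparable to every other root.
\item The deletion of $[1,n]$ is the antichain complex of the poset with the top removed.
\end{itemize}
Next we shed the maximal elements one at a time, for instance the element $[1,j]$, scanning from the top row downward. At each step the link of a shedding vertex $v$ is the antichain complex of the subposet of elements incomparable to $v$. For $v=[i,j]$ this subposet splits as a disjoint union of two smaller staircase-type posets, one on the left and one on the right, so the link is a join. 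Each piece is again an order ideal of a root poset of type $A$ with some top part removed. This suggests proving vertex decomposability for the whole class of \emph{order-ideal-free convex} subposets of the type-$A$ staircase obtained by removing upper sets; these are the ``skew'' staircase shapes. We would prove this by induction on the number of elements, checking the shedding condition that no facet of the deletion is a facet of the link. The checks reduce to extending an antichain by a smaller root below $v$, which is always possible.

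Type $B_n$ is handled the same way. Its root poset is isomorphic to the quotient of $A_{2n-1}$ by the involution, and it has a staircase description of the same kind. Alternatively, $\Delta_{B_n}$ is isomorphic to the subcomplex of $\Delta_{A_{2n-1}}$ of symmetric antichains, and the same shedding order adapts. For $D_3=A_3$ the claim is already covered by type $A$. For $G_2$ the root poset is a chain of $5$ elements with two minimal elements, and shellability is checked directly.

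\textbf{Negative part.} Links of faces in a shellable complex are shellable. Moreover, a shellable complex has Cohen--Macaulay pure skeleta, so in particular a shellable complex of dimension at least $1$ whose pure $1$-skeleton is disconnected cannot exist. Our plan is therefore:
\begin{enumerate}
\item In $D_4$, find an antichain $A$ whose link is $\Delta(P)$, where $P$ is the subposet of elements incomparable to all of $A$, such that $\Delta(P)$ is pure of dimension at least $1$ with disconnected facet graph, or has nonzero homology below its top dimension. The natural candidate is the ``claw'': the three outer simple roots $\alpha_1,\alpha_3,\alpha_4$ and the roots built from them. We would compute some link or an induced subposet that yields two disjoint edges or a similar obstruction.
\item Extend the result to $D_n$ for $n\ge 4$, $E_6$, $E_7$, $E_8$ and $F_4$. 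An induced subposet of a root poset that is an interval or lower set corresponding to a $D_4$ parabolic appears as a link, namely the link of a suitable antichain of roots far away from it. Since all these types except $F_4$ contain $D_4$ parabolically, this gives the result for them. For $F_4$ we would do a separate direct check, using either its $B_3$/$C_3$ parabolic structure combined with the long/short root interaction, or a computer computation.
\end{enumerate}

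\textbf{Main obstacle.} We expect the main obstacle to be twofold. The first difficulty is realizing a parabolic subposet as an actual link: the elements incomparable to a chosen antichain must form exactly the $D_4$ root poset, and this may instead require a Morse-matching or homology argument. The second is identifying the concrete obstruction in $D_4$ and $F_4$. For these we would start by computing the reduced homology of the skeleta of small links, by hand or by computer.
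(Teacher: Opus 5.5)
There is a genuine gap in the base cases of your negative part.

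\textbf{The reduction step is sound.} It is also easier than you fear. For a set $S$ of simple roots, a positive root is incomparable to every element of $S$ exactly when its coefficients on $S$ vanish. So the link of the face $S$ is the antichain complex of the parabolic root poset on the remaining nodes. Taking $S$ to be the nodes off a $D_4$ subdiagram exhibits $\Delta_{D_4}$ as a link in $\Delta_{D_n}$ ($n\ge 5$) and in $\Delta_{E_6},\Delta_{E_7},\Delta_{E_8}$. This would replace the paper's separate constructions for those types.

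\textbf{The base cases are missing.} Everything then rests on $\Delta_{D_4}$ and $\Delta_{F_4}$. There you produce no obstruction, and the ones you propose cannot work. Up to triality, the vertex links of $\Delta_{D_4}$ are:
\begin{itemize}
\item $\Delta_{A_3}$ at $\alpha_1$;
\item a $2$-simplex at $\alpha_2$;
\item a $4$-cycle plus a point at $\alpha_1+\alpha_2$;
\item an edge plus two points at $\alpha_1+\alpha_2+\alpha_3$;
\item $\{\emptyset\}$ at the two highest roots.
\end{itemize}
All of these are shellable. So no link of a nonempty face is a witness, and the witness must be $\Delta_{D_4}$ itself. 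That complex is not pure and has connected pure $1$-skeleton. Nor is homology below the top dimension an obstruction for nonpure complexes: $\Delta_{G_2}$ is $1$-dimensional with $\widetilde H_0=\ZZ^4$, and $\Delta_{D_3}$ has $\widetilde H_0=\widetilde H_1=\ZZ$. Yet both are shellable by the very theorem you are proving.

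\textbf{The missing idea.} You need a facet of non-maximal dimension that is attached to nothing along a ridge. Your skeleton criterion detects it once you apply it to the pure $2$-skeleton of $\Delta_{D_4}$ itself. The three roots of height $3$, $F=\{e_1-e_4,e_1+e_4,e_2+e_3\}$, form a facet none of whose vertices lies in another $3$-element antichain. (The link of $e_1-e_4$ is the edge $\{e_1+e_4,e_2+e_3\}$ plus two points.) So $F$ is a connected component of the pure $2$-skeleton, which therefore is not shellable. This is the paper's argument via \ref{lem:nonshel}.

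\textbf{For $F_4$, do not copy the paper's facet.} The facet $\{\alpha_1+\alpha_2+\alpha_3,\alpha_2+2\alpha_3,\alpha_2+\alpha_3+\alpha_4\}$ violates the hypothesis of \ref{lem:nonshel}, since $\{\alpha_1,\alpha_2+2\alpha_3,\alpha_2+\alpha_3+\alpha_4\}$ is also an antichain. The same skeleton argument works instead. List the $24$ three-element antichains of $\Phi_+(F_4)$: each consists either of roots of height at most $3$, or of roots of height $4$ or $5$. Both kinds occur, e.g.\ $\{\alpha_1,\alpha_2,\alpha_3\}$ and the three roots of height $4$. So the pure $2$-skeleton of $\Delta_{F_4}$ is disconnected.

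\textbf{Your positive part is only a sketch.} It is a legitimate alternative to the paper's explicit lexicographic shellings of $UUDD$-avoiding (symmetric) Dyck paths. However, the shedding condition is not automatic. If $v=[a,b]$ is shed while $[a-1,b-1]$ and $[a+1,b+1]$ are still present, a facet of the link through these two roots admits no extension by a root below $v$. So you must specify the order and a class of shapes closed under the left and right link pieces. Type $B_n$ is not actually argued.
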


The proofs for types $A_n$, $B_n$ and $D_n$ can be found in \ref{sec:a}, \ref{sec:b} and \ref{sec:d}, respectively. The exceptional types are
treated in \ref{sec:exc}. Note that the positive cases in \ref{thm:antichain} are precisely those types whose root posets contain no interval isomorphic to the Boolean lattice of rank $3$. Equivalently, these are the root posets which in the 
sense of \cite{Rin18} have only one cubical layer.

Using the correspondence between Dyck paths and antichains in the root poset in type $A_n$, 
we can define, in type $A_n$, a more general simplicial complex $\Delta_{\Dyck_n}^{(a,b)}$ on the set of
rational or $(a,b)$-Dyck paths of $(a,b)$-semilength $n$
for coprime natural numbers $(a,b)$ (see e.g. \cite{ALW15, BGSX16, CDH16}). 
The case $(a,b)= (1,1)$ is the
case of classical Dyck paths or equivalently antichains in the root poset of type $A_n$.
We write $\Delta_{\Dyck_n}$ for $\Delta_{\Dyck_n}^{(1,1)}$ which is isomorphic to 
$\Delta_{A_n}$. In \ref{sec:a} we prove the following result which implies 
\ref{thm:antichain} in type $A_n$.

\begin{theorem}\label{thm:shellable}
    Let $(a,b)$ be a pair of coprime natural numbers. Then 
    $\Delta_{\Dyck_n}^{(a,b)}$ is a shellable simplicial complex. In particular, 
    $\Delta_{\Dyck_n} = \Delta_{A_n}$ is
    a shellable simplicial complex.
\end{theorem}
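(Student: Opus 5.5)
The plan is to reduce \ref{thm:shellable} to a statement about a family of planar point configurations that is closed under the operations used in a vertex decomposition, and then to prove that statement by induction.

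\emph{Geometric model.} Encode a rational $(a,b)$-Dyck path by the set of its valleys, viewed as lattice points of the grid lying weakly above the line of slope $b/a$ (in type $A_n$ these valleys are exactly the antichain elements, i.e. the roots $e_i-e_j$ placed in the staircase picture of the root poset). A set of valleys $\{p_1,\dots,p_k\}$ occurs in a single path if and only if, after sorting, both coordinates strictly increase. Moreover, every subset of a valley set is the valley set of another path, namely the path hugging those corners. Hence $\Delta_{\Dyck_n}^{(a,b)}$ should be exactly the order complex of the poset $P_R$ on a finite region $R\subset\ZZ^2$ of admissible valley positions, with
\[
(x,y)<(x',y') \iff x<x' \text{ and } y<y'.
\]
The region $R$ is bounded by a monotone staircase, namely the lattice cells above the rational diagonal. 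I would first verify this identification carefully, including the boundary conventions that decide which lattice points may be valleys.

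\emph{Induction on regions.} Next I would introduce the class $\mathcal{R}$ of finite regions $R\subset\ZZ^2$ that are order-convex with respect to a staircase boundary, in the sense that $R$ is the set of lattice points of a box lying above a weakly monotone lattice path. I would then prove that the order complex of $P_R$ is vertex decomposable for every $R\in\mathcal{R}$, in the non-pure sense of Björner--Wachs; this implies shellability. The induction is on $|R|$. As shedding vertex I would take an extreme corner $v$ of $R$, for instance the point with maximal $y$ and, among those, maximal $x$. With this choice:
\begin{itemize}
\item the link of $v$ is the order complex of the points of $R$ strictly south-west of $v$, which is again a region in $\mathcal{R}$;
\item the deletion of $v$ is the order complex of $R\setminus\{v\}$, which still lies in $\mathcal{R}$ because $v$ is a corner.
\end{itemize}
By induction, both the link and the deletion are vertex decomposable.

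\emph{Main obstacle: the shedding condition.} It remains to check that no facet of the link of $v$ is a facet of the deletion. Concretely, every maximal strict chain $C$ of points strictly south-west of $v$ must extend, inside $R\setminus\{v\}$, by some other point. The natural candidates are the neighbours $v-(1,0)$ and $v-(0,1)$. If $C$ has no point with $x=v_x-1$, then $v-(1,0)$ can be appended to $C$; a symmetric argument handles $y=v_y-1$ with $v-(0,1)$. The difficult case is when $C$ uses both of these extreme rows, and here I expect the exact choice of corner, and possibly a refinement of the class $\mathcal{R}$, to matter. If the corner choice fails, my fallback is a direct non-pure shelling:
\begin{itemize}
\item order the facets, which are maximal strict staircases of points, by reverse lexicographic order of their point sequences;
\item for earlier $F'$ and current $F$, take the first point $v\in F\setminus F'$;
\item show that sliding $v$ one step down-right, or deleting it and inserting a neighbouring lattice point, produces an earlier facet $F''$ with $F\cap F'\subseteq F\cap F''=F\setminus\{v\}$.
\end{itemize}

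The case $(a,b)=(1,1)$ then gives $\Delta_{A_n}\cong\Delta_{\Dyck_n}$, which yields the type $A_n$ part of \ref{thm:antichain}.
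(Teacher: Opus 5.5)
Your reformulation is sound: valley sets of $(a,b)$-Dyck paths are exactly the strict chains in $R=\{(d,u): 1\le d\le na-1,\ 1\le u\le nb-1,\ bd\le au\}$ under the strict product order. The paper uses the same chain picture in type $B$. So $\Delta_{\Dyck_n}^{(a,b)}$ is the order complex of $P_R$.

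The gap is the shedding condition, which you leave open. For the vertex you actually choose, it is false, not merely difficult. Since $R$ is closed under moving up and left inside its box, the point $v$ of maximal $y$ (then maximal $x$) is the north-east corner. Hence the only point of $R$ strictly north-east of $v-(1,1)$ is $v$ itself. Consequently, if $v-(1,1)\in R$, every maximal chain of the link passing through $v-(1,1)$ has no extension in $R\setminus\{v\}$. It is then a facet of both the link and the deletion.

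The smallest instance is $(a,b)=(1,1)$, $n=3$. Here $R=\{(1,1),(1,2),(2,2)\}$, and the complex is the edge $\{(1,1),(2,2)\}$ plus the isolated vertex $(1,2)$. The link of $v=(2,2)$ is the vertex $(1,1)$, which is also a facet of the deletion. For general $n$ the chain $(1,1)<\cdots<(n-2,n-2)$ fails in the same way.

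The class is also not closed under this deletion. Whenever the column of $v$ has more than one point (e.g. $(a,b)=(1,2)$, $n=3$, where the column over $d=2$ is $\{4,5\}$), $R\setminus\{v\}$ has a column not reaching the top of the box, so it is not in $\mathcal{R}$. No refinement of $\mathcal{R}$ rescues this corner. A vertex decomposition would need a different shedding order and a different class, and the proposal supplies neither.

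Your fallback is, in outline, the paper's proof, but all of its content sits in the step you only announce. The paper orders facets by increasing lexicographic order of their valley sequences and lets $p$ be the first valley of $w$ not in $v$. It then builds $z\prec w$ with $V(z)\cap V(w)=V(w)\setminus\{p\}$ by cases:
\begin{itemize}
\item If $p$ is preceded by $DD$, replace $DDU$ by $DUD$, so $p$ moves to $p-(1,0)$.
\item Otherwise, using $v\prec w$ and the minimality of $p$, one shows $p-(1,1)$ is not a valley, so $p$ is preceded by $UU$. According as $w$ continues with $U$, $DU$ or $DD$, one replaces $p$ by $p-(0,1)$, adding $p+(1,0)$ in the last case.
\end{itemize}
The delicate point is that $p-(0,1)$ need not satisfy $bd\le a(u-1)$ on the strength of $w$ alone. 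In the first two subcases the paper derives this inequality from the earlier facet $v$, whose $k$th valley precedes $p$. Your sketch (``slide down-right or insert a neighbouring point'') identifies neither the moves nor why they stay in the region, stay $UUDD$-avoiding, and come earlier.

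The direction also matters. Suppose ``reverse lexicographic'' means decreasing lexicographic order on sorted sequences, which is what makes a down-right slide earlier. Then for $(a,b)=(1,1)$, $n=3$, the isolated vertex $\{(1,2)\}$ precedes the edge $\{(1,1),(2,2)\}$. The edge meets it in $\varnothing$, so that order is not a shelling.
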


A correspondence analogous to the one in type $A_n$ identifies the
faces of $\Delta_{B_n}$ with symmetric Dyck paths. 

The sum of the entries of the $f$-triangle (see \cite{BjoWac96}) of $\Delta_{A_n} = \Delta_{\Dyck_n}$ is the number of Dyck paths and for $\Delta_{B_n}$ the sum counts symmetric Dyck paths. We work out the statistics provided by the $f$-triangle on (symmetric) Dyck paths in \ref{thm:typeA-f} and \ref{thm:typeB-f}. By \cite{BjoWac96} the $h$-triangle of a shellable simplicial complex provides a statistic on the set of facets of the complex. 
We work out these statistics for type $A_n$ and $B_n$ in \ref{thm:typeA-h} and \ref{thm:typeB-h}. Even though there appear to be some relations to known statistics all those statistics seem to be new.

Indeed, our original motivation for defining and studying $\Delta_{\fD}$ 
is its $f$-vector and a
question posed by Bell and Skandera \cite{BS07}:

\begin{question} \label{que:bs}
Let 
$1+f_0t+\cdots +f_{d-1}t^d \in \NN[t]$ be a polynomial with
only real roots. Is there a simplicial complex 
$\Delta$ whose $f$-vector is $(1,f_0,\ldots, f_{d-1})$? 
\end{question}

In recent work \cite{MW26}, the authors obtained several partial
affirmative answers to this question. 
They also showed that several classical real-rooted
polynomials, including the Eulerian polynomials, the Stirling
polynomials of the second kind, and the derangement polynomials, are
$f$-polynomials of simplicial complexes.

Another classical family of real-rooted
polynomials arises from the Narayana numbers defined as
$
N_{n,k}
=
\frac{1}{n}\binom{n}{k}\binom{n}{k-1},
 1\leq k\leq n.
$
The corresponding Narayana polynomial
$
N_n(t)=\sum_{k=1}^{n}N_{n,k}t^{k-1}
$
is real-rooted; see \cite[Theorem~5.3.1]{Bre89}. Since
$N_{n,1}=1$, the polynomial $N_n(t)$ is an instance of the
Bell--Skandera question. It is an immediate consequence of the definition and well-known facts about antichains in the root poset (see \cite{Rin18}) that $\Delta_{\Dyck_n}$ answers the question positively for Narayana numbers.
Thus, our construction not only verifies the required
$f$-vector property but also provides an explicit combinatorial
realization in terms of Dyck paths. Narayana numbers and polynomials are defined for every type 
(see e.g. \cite{Arm09} or \cite{Rin16}). 
For type $B_n$, the real-rootedness of their generating polynomial follows from 
a result by Simion \cite{Sim84}, for $D_n$ it is a result of Br\"anden \cite{Bra04} and
for the exceptional types it was first mentioned in \cite[Section 5.2]{RW05}. The latter also contains
a discussion of all types. 
By well-known facts about antichains in the root poset (see \cite{Rin18}) the type $\fD$ Narayana numbers are the entries in the $f$-vector of $\Delta_{\fD}$. This is what initially motivated us to  study the simplicial complexes
$\Delta_{\fD}$.

The paper is structured as follows. In \ref{sec:basic} we provide the basic definitions around
simplicial complexes, their $f$- and $h$-triangles and shellability. In \ref{sec:a} we prove
\ref{thm:shellable} which implies \ref{thm:antichain} in type $A_n$. We also give explicit combinatorial interpretations of the $f$- and
 $h$-triangle for $\Delta_{A_n}$. We ask for an extension of these interpretations to all
 rational Dyck paths. In the Appendix, we provide $f$- and $h$-triangles of rational Dyck paths counted by Fuss-Catalan numbers for small parameters.
 In \ref{sec:b}, we prove \ref{thm:antichain} for type $B_n$ and
also provide combinatorial interpretations of the $f$- and $h$-triangles.
In \ref{sec:d}, we cover the type $D_n$ case of \ref{thm:antichain}. In \ref{sec:exc}, we then
treat all exceptional cases of \ref{thm:antichain}.

All calculations of the homology, the $f$-triangles, and the $h$-triangles of the $\Delta_{\fD}$ were performed with the help of Sage \cite{sagemath}.

\section{Basic notation and results on simplicial complexes} \label{sec:basic}

In this section we introduce some basic notions and results around simplicial complexes
(see \cite{BjoWac96} for more details).

A simplicial complex $\Delta$ on a (finite) ground set $\Omega$ is a
subset of $2^\Omega$ such that $\tau \subseteq \sigma \in \Delta$ implies $\tau \in \Delta$.
We call $\sigma
\in  \Delta$ a face of $\Delta$ and an inclusion-wise maximal face of $\Delta$ is called a maximal face. The dimension $\dim (\sigma)$ of a face is 
$|\sigma| -1$. The dimension 
$\dim(\Delta)$ of $\Delta$ is the maximal dimension of one of its faces. 
The vector $(f_{-1},\ldots, f_{d-1})$ for
$d-1 = \dim (\Delta)$ and $f_i = \big|\,\{
\,\sigma \in \Delta~:~\dim \sigma = i\,\}\,\big|$ is 
called the $f$-vector of $\Delta$. 
The degree $\deg(\sigma)$ of a face
$\sigma \in \Delta$ is 
$\max \{\, \dim(\tau)+1~:~
\tau \in \Delta \text{ and } \sigma \subseteq \tau\,\}$. 
Let $f_{ij}$ be the number of faces
$\sigma$ of $\Delta$ with $\dim(\sigma) +1 
= |\sigma| = j$ and $\deg(\sigma) = i$.
Then the array $(f_{ij})_{0 \leq j \leq i \leq d}$
is called the $f$-triangle of $\Delta$.
It follows from the
definitions that 
$f_{j-1} = f_{d,j} + \cdots + f_{j,j}$.
The single-variable $f$-polynomial $\ff^\Delta(t)$ of a simplicial complex $\Delta$ 
of dimension $d-1$ is
defined as $\ff^ \Delta(t) = \sum_{i=-1}^{d} f_{i}t^ {d-i}$
which by the above identity satisfies
$\ff^ \Delta(t) = \frac{1}{t^{d+1}} \ff^\Delta(t,\frac{1}{t})$
for the two-variable $f$-polynomial $\ff^ \Delta(x,y)= \sum_{0 \leq j \leq i} f_{ij}x^ iy^{i-j}$.
Note that \ref{que:bs} applies to the polynomial $t^{d+1} \ff^\Delta(\frac{1}{t})$. 
The coefficients $(h_{ij})_{0 \leq j \leq i}$ of the two-variable $h$-polynomial
$\fh^\Delta(x,y) = \sum_{0 \leq j \leq i} h_{ij} x^iy^ {i-j} = \ff^ \Delta(x,y-1)$
are called the $h$-triangle of $\Delta$.
The coefficient vector $(h_0,\ldots, h_d)$ of the single variable $h$-polynomial 
$\fh^ \Delta(t) = \sum_{i=0}^ d h_it^{d-i} = \frac{1}{(t-1)^ {d+1}}\fh^ \Delta(\frac{1}{t-1},t)$
is called the $h$-vector of $\Delta$ and $\fh^\Delta(t)$ the single-variable $h$-polynomial
of $\Delta$.

By the above formulas, the $h$-triangle and $h$-polynomial can in principle be
calculated from the $f$-triangle and $f$-polynomial. Nevertheless, the 
transformation formulas are often hard to interpret. For ''nice'' simplicial 
complexes, the following concepts
allow for a more direct approach.

   A simplicial complex $\Delta$ is called {\it shellable} if there is a linear
  order $\prec$ on the set of its maximal faces such that 
  if $\sigma \prec \tau$ then there is $\gamma \prec \tau$ such that
  $$\sigma \cap \tau \subseteq \gamma \cap \tau = \tau \setminus \{x\}$$
  for some $x \in \tau$. The order $\prec$ is then called a shelling order.
For a shelling order $\prec$ and $\sigma$ a maximal face of $\Delta$ we define the
restriction 
  \[
R(\sigma) = \big\{\,x \in \sigma ~:~ \sigma \setminus\{x\} \subseteq \sigma' \text{ for some maximal face } \sigma' \in \Delta \text{ with } \sigma' \prec \sigma\,\big\}.
\]

 \begin{proposition}[Theorem 3.4 \cite{BjoWac96}] \label{prop:shelh}
 Let $\Delta$ be a shellable simplicial complex with shelling order $\prec$. Then for $0 \leq j  \leq i$ 
 $$h_{ij} = \Big|\, \big\{ \,\sigma \in \Delta\,:\,\sigma \text{ maximal face }, \,|\sigma| = i \text { and } |R(\sigma)| = j\,\big\}\,\Big|.$$
\end{proposition}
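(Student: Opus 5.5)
The plan is to derive the formula from the standard interval decomposition of a shellable complex and then carry out a generating-function computation.

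\emph{Step 1: reduce to a size-decreasing order.} First I would replace $\prec$ by a shelling order in which maximal faces appear in weakly decreasing cardinality. I would invoke the Rearrangement Lemma of Bj\"orner--Wachs: ordering the facets by decreasing size, and keeping the relative order of facets of equal size, again yields a shelling order. Moreover, every facet keeps the same restriction set $R(\sigma)$. I would argue this directly from the shelling condition. If $\tau\setminus\{x\}\subseteq\sigma'$ with $\sigma'\prec\tau$, then the codimension-one face $\tau\setminus\{x\}$ is also covered by an earlier facet after rearranging. Conversely, moving smaller facets later cannot create new covers of codimension-one faces of larger facets. Since the statement only involves the multiset of pairs $(|\sigma|,|R(\sigma)|)$, we may therefore assume the sizes are decreasing.

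\emph{Step 2: partition of $\Delta$ into intervals.} Next I would show that every face $\tau\in\Delta$ lies in exactly one interval $[R(\sigma),\sigma]$, namely for the $\prec$-first facet $\sigma$ containing $\tau$. This follows from the shelling property. The faces of $\sigma$ already contained in earlier facets are exactly those contained in some $\sigma\setminus\{x\}$ with $x\in R(\sigma)$, that is, exactly those not containing $R(\sigma)$. Because sizes decrease, any facet containing $\tau$ that is larger than $\sigma$ would precede $\sigma$, contradicting minimality. Hence $\deg(\tau)=|\sigma|$.

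\emph{Step 3: the generating-function computation.} From Step 2,
\[
f_{ij}=\sum_{\sigma:\ |\sigma|=i}\binom{i-|R(\sigma)|}{j-|R(\sigma)|}.
\]
A facet with $|\sigma|=i$ and $|R(\sigma)|=r$ therefore contributes $\sum_j\binom{i-r}{j-r}x^iy^{i-j}=x^i(1+y)^{i-r}$ to $\ff^\Delta(x,y)$. Substituting $y\mapsto y-1$ gives the contribution $x^iy^{i-r}$ to $\fh^\Delta(x,y)$. Comparing coefficients yields the asserted formula for $h_{ij}$.

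The main obstacle is Step 1, specifically checking that the rearrangement preserves both shellability and the restriction sets in the nonpure setting. Without decreasing sizes, the degree of a face need not equal the size of the first facet containing it, and the computation in Step 3 would break down.
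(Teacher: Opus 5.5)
Your plan is the standard Bj\"orner--Wachs argument behind the cited theorem; the paper itself gives no proof of its own. Steps~2 and~3 are correct. The faces first appearing at $\sigma$ are exactly those in $[R(\sigma),\sigma]$, and a facet with $|\sigma|=i$, $|R(\sigma)|=r$ contributes $x^i(1+y)^{i-r}$, hence $x^iy^{i-r}$ after $y\mapsto y-1$. If Step~1 is just a citation of the Rearrangement Lemma, the proof is complete.

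The direct justification you sketch for Step~1, however, does not prove it. Sorting by size changes the predecessors of a facet $\tau$ in two ways:
\begin{itemize}
\item smaller facets that preceded $\tau$ now follow it;
\item larger facets $\sigma'$ with $\tau\prec\sigma'$ now precede it.
\end{itemize}
Your first sentence handles the first change correctly: a facet containing $\tau\setminus\{x\}$ has at least $|\tau|$ elements, because $\tau\setminus\{x\}$ is not a facet. Your second sentence is again about facets losing predecessors, which trivially creates no new covers. The real danger is the second change. A priori such a $\sigma'$ could contain some $\tau\setminus\{x\}$ with $x\notin R(\tau)$, which would enlarge the restriction. It could also meet $\tau$ in a face lying in no $\tau\setminus\{x\}$ with $x\in R(\tau)$, which would violate the shelling condition at $\tau$.

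What rules this out is the following lemma: \emph{if $|\sigma'|>|\tau|$, then $\sigma'\cap\tau\subseteq\gamma$ for some facet $\gamma\prec\tau$.} Prove it by induction on the position of $\sigma'$.
\begin{itemize}
\item If $\sigma'\prec\tau$, take $\gamma=\sigma'$.
\item If $\tau\prec\sigma'$, the shelling condition at $\sigma'$ gives $\gamma_1\prec\sigma'$ and $y\in\sigma'$ with $\tau\cap\sigma'\subseteq\gamma_1\cap\sigma'=\sigma'\setminus\{y\}$. Since $\gamma_1$ is a facet containing $\sigma'\setminus\{y\}$, we get $|\gamma_1|\geq|\sigma'|>|\tau|$. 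Induction applied to $\gamma_1$ then gives $\tau\cap\sigma'\subseteq\tau\cap\gamma_1\subseteq\gamma$ for some $\gamma\prec\tau$.
\end{itemize}
Together with the shelling condition at $\tau$ and your size observation, this shows that the sorted order is a shelling and that $R(\tau)$ is unchanged.

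The same lemma also shows that your closing remark is mistaken. In \emph{any} shelling order, let $\tau$ be the first facet containing a face $G$, and suppose some facet $\sigma'\supseteq G$ had $|\sigma'|>|\tau|$. Then $G\subseteq\sigma'\cap\tau\subseteq\gamma$ with $\gamma\prec\tau$, contradicting the choice of $\tau$. Hence $\deg(G)=|\tau|$ already holds for the original order. You can therefore drop Step~1 entirely and run Steps~2 and~3 directly with $\prec$.
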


\section{Type \texorpdfstring{$A_n$}~: classical and \texorpdfstring{$(a,b)$}--Dyck paths} \label{sec:a} 

Through the following example, 
we explain the well-known fact that antichains in $\Phi_+(A_n)$ 
correspond to classical Dyck paths. Before we get to the example, we
recall the basic definitions around Dyck words.

For a word $w = w_1\cdots w_n$ in the alphabet $\{D, U\}$
we write $|w|$ for its length $n$. We set $D_i(w) = \{j~:~j \leq i , w_j = D\}$
and $U_i(w) = \{j~:~j \leq i , w_j = U\}$.
A Dyck path or Dyck word of semilength $n$ is a word $w = w_1\cdots w_{2n}$ of
length $|w| = 2n$ in the alphabet $\{U, D\}$ such that 
for all $1 \leq i < 2n$ we have $|D_i(w)| \leq |U_i(w)|$ and
$|D_{2n}(w)| = |U_{2n}(w)|$.

\begin{example}
   Let $\fD = A_{n}$ with root system 
   $\Phi(A_{n}) = \big\{\,\pm(e_i-e_j)~:~ 1 \leq i <j \leq n+1\,\big\}$.
   We choose the 
   simple roots $e_1-e_2, e_2-e_3,
   \ldots, e_{n}-e_{n+1}$. Then the positive roots
   are $\Phi_+(\fD) = \{ e_i - e_j \,:\,
   1 \leq i < j \leq n+1\}$.
   It is easily seen that 
   $e_i - e_j \leq e_{i'} - e_{j'}$ if and only if
   $i' \leq i < j \leq j'$. See \ref{fig:a6} for the
   Hasse diagram of the root poset for $A_5$.
   
    \begin{figure}[ht!]
\centering
\begin{tikzpicture}

\def\n{5} 
\pgfmathtruncatemacro{\N}{\n+1} 

\foreach \i in {1,...,\n} {
  \foreach \j in {\the\numexpr\i+1\relax,...,\N} {
    \pgfmathtruncatemacro{\h}{\j-\i}
    \node (r-\i-\j) at (2*\i+\h,\h) {};
    \fill (r-\i-\j) circle (2.5pt) node[right] {$~~e_\i-e_\j$};
  }
}

\foreach \i in {1,...,\n} {
  \foreach \j in {\the\numexpr\i+1\relax,...,\N} {
    \ifnum\j<\N
      \draw (r-\i-\j) -- (r-\i-\the\numexpr\j+1\relax);
    \fi
    \ifnum\i>1
      \draw (r-\i-\j) -- (r-\the\numexpr\i-1\relax-\j);
    \fi
  }
}

\end{tikzpicture}
\caption{Root Poset of Type $A_5$} \label{fig:a6}
\end{figure}

\ref{fig:dyck} visualizes the 
bijection between antichains in the root poset of
$A_n$ and Dyck paths of semilength $n+1$ in the case $n = 5$ and
for the antichain $\{e_2-e_4,e_4-e_5\}$.
   \begin{figure}[ht!]
\centering

\begin{tikzpicture}

\def\n{5} 
\pgfmathtruncatemacro{\N}{\n+1}

\foreach \i in {1,...,\n} {
  \foreach \j in {\the\numexpr\i+1\relax,...,\N} {
    \ifnum\j<\N
      \draw (r-\i-\j) -- (r-\i-\the\numexpr\j+1\relax);
    \fi
    \ifnum\i>1
      \draw (r-\i-\j) -- (r-\the\numexpr\i-1\relax-\j);
    \fi
  }
}
\node (A) at (1,1) {};
 \node (B) at (2,2) {};
   \node (C) at (3,3) {};
      \node (D) at (4,4) {};
     \node (E) at (11,3) {};
      \node (F) at (12,2) {};
      \node (G) at (13,1) {};

\draw[blue, very thick] (A) -- (B) -- (C) -- (D);

\draw[blue,very thick] (E) -- (F) -- (G);

    \draw[blue,very thick] (D) -- (r-2-4);
\draw[blue,very thick] (r-2-4) -- (r-2-5);
\draw[blue,very thick] (r-2-5) -- (r-4-5);
\draw[blue,very thick] (r-4-5) -- (E);

\foreach \i in {1,...,\n} {
  \foreach \j in {\the\numexpr\i+1\relax,...,\N} {
    \pgfmathtruncatemacro{\h}{\j-\i}
    \node (r-\i-\j) at (2*\i+\h,\h) {};
    \ifnum \i = 2 
      {\ifnum \j = 4
        \fill[red] (r-\i-\j) circle (2.5pt) node[right]
          {$~~e_2-e_4$};
        \else 
        \fill (r-\i-\j) circle (2.5pt);
        \fi}
    \else
        {\ifnum \i = 4
          {\ifnum \j = 5
             \fill[red] (r-\i-\j) circle (2.5pt) node[right]
          {$~~e_4-e_5$};; 
           \else 
             \fill (r-\i-\j) circle (2.5pt);
           \fi}
        \else
         \fill (r-\i-\j) circle (2.5pt);
       \fi}
       \fi
  }
  
  \fill[blue] (A) circle (2.5pt);
   \fill[blue] (B) circle (2.5pt);
   \fill[blue] (C) circle (2.5pt);
   \fill[blue] (D) circle (2.5pt);
   \fill[blue] (E) circle (2.5pt);
   \fill[blue] (F) circle (2.5pt);
   \fill[blue] (G) circle (2.5pt);
}

\end{tikzpicture}
\caption{The Dyck Path of Semilength $6$ corresponding to the Antichain 
$\{e_2-e_4,e_4-e_5\}$ in the Root Poset of Type $A_5$} \label{fig:dyck}
\end{figure}
\end{example}

Indeed, we will consider the more general setting of $(a,b)$-Dyck paths (see \cite{ARW13}).
For two positive coprime integers $a,b$
we say that $w = w_1\cdots w_r$ is an $(a,b)$-Dyck path or $(a,b)$-Dyck word if
$b\cdot |D_i(w)| \leq a \cdot |U_i(w)|$ for $1 \leq i < r$ and
$b \cdot |D_{r}(w)| = a \cdot |U_{r}(w)|$.
It follows that $r = |U_r(w)|\,(1+\frac{a}{b})$.
Since $a$ and $b$ are coprime, it also follows that $|U_r(w)|$ is an integer multiple of $b$, say $nb$, and $|D_r(w)|$ is an integer multiple of $a$ for the same
number $n$. In particular, $r = n(a+b)$.
We call the integer $n$ determined by $nb= |U_r(w)|$ the $(a,b)$-length of the $(a,b)$-Dyck path $w$.
An $(a,b)$-Dyck path $w$ is also called a rational Dyck path.
For $(a,b) = (1,1)$ an $(a,b)$-Dyck path is a Dyck path in the
usual sense and the $(1,1)$-length is the semilength of the Dyck path. 

We denote by $\Dyck_n^ {(a,b)}$ the set of all
$(a,b)$-Dyck paths of $(a,b)$-semilength $n$. We write $\Dyck_n$ for the classical $\Dyck_n^{(1,1)}$.
A valley of the Dyck word $w \in \Dyck_n^{(a,b)}$ is some $1 \leq i < r$ such that
$w_i = D$ and $w_{i+1} = U$.
We often identify the valley with its position $(\,i,U_i(w) - D_i(w)\,)$ 
and call $U_i(w)-D_i(w)$
the height of the valley. We denote by
$V(w)$ the set of all positions of the valleys of $w$. The first lemma is an immediate consequence of
the definitions.

\begin{lemma}
    Let $a,b$ be two positive coprime integers. 
    The map $V : \Dyck_n^{(a,b)} \rightarrow 2^{\NN \times \NN}$ sending $w \in \Dyck_n^{(a,b)}$ to $V(w)$  is injective. 
\end{lemma}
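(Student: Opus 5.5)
We reconstruct $w$ from $V(w)$ and the known parameters $a,b,n$. A word in $\{U,D\}$ decomposes uniquely into maximal runs, alternating between $U$-runs and $D$-runs. An $(a,b)$-Dyck path of positive length must start with $U$, because $b\,|D_1(w)| \le a\,|U_1(w)|$ forces $w_1 = U$. It must also end with $D$, since at the last step the height $a|U_i|-b|D_i|$ returns to $0$ after being $\ge 0$, and the final letter cannot be $U$. Hence
\[
w = U^{u_1}D^{d_1}U^{u_2}D^{d_2}\cdots U^{u_k}D^{d_k}
\]
with all $u_i,d_i\ge 1$, and the valleys are exactly the positions $i$ at which a $D$-run ends and is followed by a $U$-run. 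So $|V(w)| = k-1$, and the valleys are totally ordered by their first coordinate.

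The key step is that the valley data determine the run lengths. Write the valleys as $(p_1,h_1),\dots,(p_{k-1},h_{k-1})$ with $p_1<\dots<p_{k-1}$. Here $p_j = \sum_{i\le j}(u_i+d_i)$, and $h_j$ is the height at $p_j$. Throughout I use the height in the form $a\,|U_{p_j}(w)| - b\,|D_{p_j}(w)|$, or simply record the counts $|U_{p_j}(w)|$ and $|D_{p_j}(w)|$; for $(a,b)=(1,1)$ this is the paper's $U_i(w)-D_i(w)$. Once $p_j$ and the height are known, $|U_{p_j}|$ and $|D_{p_j}|$ are determined, because $|U_{p_j}|+|D_{p_j}| = p_j$ and the linear system with rows $(1,1)$ and $(a,-b)$ has nonzero determinant $-(a+b)$. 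Taking successive differences between consecutive valleys gives every $u_j$ and $d_j$ for $j\le k-1$. The final pair is fixed by the totals: $u_k = nb - |U_{p_{k-1}}|$ and $d_k = na - |D_{p_{k-1}}|$, where $p_0=0$ covers the case $k=1$, in which $w = U^{nb}D^{na}$ is the unique path without valleys. Thus $w$ is recovered from $V(w)$, and $V$ is injective on $\Dyck_n^{(a,b)}$.

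The main obstacle is interpretive rather than technical: one must make sure the recorded height really does determine the split into $|U_{p_j}|$ and $|D_{p_j}|$. The linear-system remark above settles this for either normalization of the height. No earlier results of the paper are needed.
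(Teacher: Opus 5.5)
Your proposal is correct and follows essentially the same argument as the paper: cut $w$ at its valleys into blocks $U^{u}D^{d}$, note that each block's run lengths are forced by the coordinates of its endpoints, and use the common start and end points to pin down the first and last blocks. Your checks that $w$ starts with $U$ and ends with $D$, and that the height normalization still determines $|U_{p_j}(w)|$ and $|D_{p_j}(w)|$, fill in details the paper leaves implicit.
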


\begin{proof}
Let the valleys of $w$ be ordered from left to right. The portion
of $w$ between two consecutive valleys contains no other valley
and hence consists of a sequence of $U$-steps followed by a
sequence of $D$-steps. The same holds for the portions preceding
the first valley and following the last valley. The numbers of
$U$- and $D$-steps in each such portion are uniquely determined
by the coordinates of its endpoints. Since all paths in
$\Dyck_n^{(a,b)}$ have the same initial and terminal points, the
set $V(w)$ uniquely determines $w$. Thus, the map $V$ is
injective.
\end{proof}

We now define $\Delta_{\Dyck_n}^{(a,b)}$ as $\{ \,V(w)~:~w \in \Dyck_n^{(a,b)}\,\}$.
Following our convention, we write $\Delta_{\Dyck_n}$ for $\Delta_{\Dyck_n}^{(1,1)}$.

\begin{lemma}
    For two positive coprime integers $a,b$ the set $\Delta_{\Dyck_n}^{(a,b)}$ is a simplicial complex. 
    For $(a,b) = (1,1)$ its dimension is $n-2$.
\end{lemma}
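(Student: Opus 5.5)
Proving that $\Delta_{\Dyck_n}^{(a,b)}$ is a simplicial complex amounts to showing closure under taking subsets. Since every subset is a union of single-element deletions, it suffices to take $w \in \Dyck_n^{(a,b)}$ and a valley $v=(i,h) \in V(w)$, and to construct $w' \in \Dyck_n^{(a,b)}$ with $V(w') = V(w)\setminus\{v\}$. The empty set is covered by the path $U^{nb}D^{na}$, which has no valleys.

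\emph{Construction of $w'$.} Let $v^-$ and $v^+$ be the neighbouring valleys of $v$, or the start and end points of the path if $v$ is the first or last valley. By the proof of the injectivity lemma, between $v^-$ and $v$ the path is $U^{p}D^{q}$, and between $v$ and $v^+$ it is $U^{r}D^{s}$. Replace the portion $U^pD^qU^rD^s$ by $U^{p+r}D^{q+s}$, which has the same endpoints. Every lattice point of the new segment lies weakly above the old segment, because the old segment is the lower boundary of the same rectangle-like region: the new path first goes straight up and then straight down. Hence the inequality $b|D_k|\le a|U_k|$ still holds at all positions. The final counts are unchanged, so $w'\in\Dyck_n^{(a,b)}$.

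\emph{Valleys of $w'$.} Outside the modified segment nothing changes. Inside it there is now no $DU$ factor. At the junctions, $v^-$ is preceded by $D$ and followed by $U$ in both paths, and the same holds at $v^+$. One must check that these remain valleys. This requires $p+r\ge1$ and $q+s\ge 1$, which holds because $p,s\ge 1$: both $v^-$ and $v^+$ are valleys, or they are the endpoints. Also, no valley can be created at the endpoints. Thus $V(w')=V(w)\setminus\{v\}$.

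\emph{Dimension for $(a,b)=(1,1)$.} We bound the number of valleys of a Dyck path of semilength $n$. Every valley is a $DU$ factor. Each valley is followed, before the next valley, by a run beginning with $U$, so the number of valleys is one less than the number of maximal $U$-runs, hence at most $n-1$. The bound $n-1$ is attained by $(UD)^n$. So the maximal face size is $n-1$ and the dimension is $n-2$.

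The main obstacle is the careful check in the construction step that the new path stays admissible and that the valleys $v^\pm$ survive. For the rational case this is the monotonicity argument: each prefix of the merged segment has at least as many $U$'s and at most as many $D$'s, relative to the common start point, as the corresponding prefix of the original segment.
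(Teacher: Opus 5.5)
Your proof is correct and is essentially the paper's argument: replacing $U^pD^qU^rD^s$ by $U^{p+r}D^{q+s}$ is exactly the paper's swap $w_1U\,D^dU^u\,Dw_2 \mapsto w_1U\,U^uD^d\,Dw_2$ of the two maximal runs around the deleted valley, and the dimension claim rests on the same extremal example $(UD)^n$. Your prefix-count monotonicity check and your bound via the number of maximal $U$-runs spell out the two steps the paper only calls clear.
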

\begin{proof}
    It suffices to show that if $(x,y) \in V(w)$ then there is
    $w' \in \Dyck_n^{(a,b)}$ such that $V(w') = V(w) \setminus \{(x,y)\}$. 
    We factor $w = w_1U\underbrace{D \cdots D}_d \underbrace{U\cdots U}_u D w_2$ such that $w$ reaches the valley $(x,y)$ after
        $w_1U\underbrace{D \cdots D}_d$. Set
        $w' = w_1U \underbrace{U\cdots U}_u \underbrace{D \cdots D}_d Dw_2$. Then $(x,y)$ is no longer a valley of $w'$ and all
        other valleys are preserved. It follows that 
        $V(w') = V(w) \setminus \{(x,y)\}$. The condition on $(a,b)$-Dyck paths is clearly preserved from $w$ to $w'$.

        The Dyck path
        $\underbrace{UD \cdots UD}_{2n}$
        has $n-1$ valleys and clearly
        maximizes the number of valleys
        for $(1,1)$-Dyck paths. It follows that
        the dimension of 
        $\Delta_\Dyck^{(1,1)}$ is $n-2$.
\end{proof}

Next, we classify the maximal faces of 
$\Delta_{\Dyck_n}^{(a,b)}$.

\begin{lemma} \label{lem:max}
  Let $a,b$ be two positive coprime integers and let $w \in \Dyck_n^{(a,b)}$ be an $(a,b)$-Dyck path. Then $V(w)$ is a maximal face
  of $\Delta_{\Dyck_n}^{(a,b)}$ if and only if $w$ does not 
  have a factorization $w_1UUDDw_2$ for possibly empty
  words $w_1,w_2$. 
\end{lemma}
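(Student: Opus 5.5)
We prove the two implications separately, working throughout with the decomposition of a rational Dyck path into peak segments used in the proof of the injectivity lemma for $V$. If $v_1,\dots,v_k$ are the valleys of $w$ ordered from left to right, and $v_0$, $v_{k+1}$ denote the initial and terminal points, then the portion of $w$ from $v_{i}$ to $v_{i+1}$ has the form $U^{p_i}D^{q_i}$ with $p_i,q_i\geq 1$. A factor $UUDD$ can only occur inside one of these segments, since every segment boundary is a $DU$. Hence $w$ has no factorization $w_1UUDDw_2$ if and only if $\min(p_i,q_i)=1$ for every $i$.

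Suppose first that $w=w_1UUDDw_2$. We set $w'=w_1UDUDw_2$. Let the endpoint of $w_1$ have $u$ up-steps and $d$ down-steps. The only new prefix of $w'$ ends at the lattice point $(u+1,d+1)$, which is the midpoint of $(u,d)$ and $(u+2,d+2)$. Both of these are prefix counts of $w$, so they satisfy $b\cdot D\leq a\cdot U$. The region $\{(U,D): bD\leq aU\}$ is convex, so $(u+1,d+1)$ satisfies it too, and therefore $w'\in\Dyck_n^{(a,b)}$. Both $UUDD$ and $UDUD$ begin with $U$ and end with $D$, so the valleys at the junctions with $w_1$ and $w_2$ are unchanged. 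All other valleys of $w$ lie inside $w_1$ or $w_2$ and are likewise unchanged, with the same positions and heights, since step indices are preserved. The factor $UDUD$ creates exactly one new valley. Thus $V(w)\subsetneq V(w')$, and $V(w)$ is not maximal.

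Conversely, suppose $w$ has no factor $UUDD$, and suppose $V(w)\subsetneq V(w')$ for some $w'\in\Dyck_n^{(a,b)}$. Every valley of $w$ is a valley of $w'$, so $w'$ passes through all the points $v_1,\dots,v_k$ at the same step indices. It also shares the initial and terminal points $v_0,v_{k+1}$. An extra valley of $w'$ must therefore lie strictly between some consecutive $v_i$ and $v_{i+1}$. The portion of $w'$ from $v_i$ to $v_{i+1}$ starts with $U$: at $i=0$ because paths start with $U$, otherwise because $v_i$ is a valley of $w'$. Similarly it ends with $D$. Because it goes between the same two lattice points as the corresponding portion of $w$, it uses exactly $p_i$ up-steps and $q_i$ down-steps.

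This portion also contains an interior $DU$. A word that starts with $U$, ends with $D$, and contains $DU$ must contain the pattern $U\cdots D\,U\cdots D$, and so has at least two $U$'s and at least two $D$'s. Hence $p_i,q_i\geq 2$, so the corresponding segment $U^{p_i}D^{q_i}$ of $w$ contains $UUDD$. This is a contradiction, so $V(w)$ is maximal.

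The step needing the most care is the converse. We have to justify that $w'$ really passes through the valleys of $w$ at the same indices, which holds because valleys are recorded as (index, height) pairs, and that the step counts on each segment are forced. The only subtle point in the first direction is the Dyck condition for rational $(a,b)$, which the convexity argument above handles.
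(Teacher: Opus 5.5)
Your proof is correct and matches the paper's argument: the same $UUDD\to UDUD$ replacement for one direction, where your midpoint/convexity check is the paper's pair of bounds $\geq a-b$ and $\geq b-a$ in another form, and the same segment-by-segment comparison for the converse. The only differences are small. You treat an arbitrary larger face $V(w')$ directly instead of first reducing to one with a single extra valley, and you state explicitly why the relevant portion of $w'$ starts with $U$ and ends with $D$, which the paper uses without comment.
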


\begin{proof} 
Suppose first that $w$ contains a factor $UUDD$. 
Thus, we may write $w$ as $w=w_1\,UUDD\,w_2.$ Define $w'=w_1\,UDUD\,w_2.$  We first verify that $w'$ is again an $(a,b)$-Dyck path. Let $r = |w|$ then we also have $|w'| = r$. By definition 
$|U_r(w)| = |U_r(w')|$ and $|D_r(w)| = |D_r(w')|$. In particular, $\frac{a}{b} = \frac{|D_r(w)|}{|U_r(w)|} = \frac{|D_r(w')|}{|U_r(w')|}$. 
Thus the $(a,b)$-Dyck path condition for $w'$ is equivalent to $a|U_i(w')|-b|D_i(w')| \geq 0$
for $0 \leq i < r$. 

Let $j$ be the length of the word $w_1UU$. By
construction, if $i \neq j$ then $U_i(w) = U_i(w')$ or equivalently $D_i(w) = D_i(w')$. 
Since $w$ is an $(a,b)$-Dyck path it follows that $a|U_i(w)|-b|D_i(w)| \geq 0$
for all $i$
and hence $a|U_i(w')|-b|D_i(w')| \geq 0$ for $i \neq j$. 
Again by construction $|U_j(w')| = |U_{j+2}(w)|-1 = |U_{j-2}(w)|+1$
and $|D_j(w')| = |D_{j+2}(w)|-1 = |D_{j-2}(w)|+1$. It follows that
\begin{align*}
     a|U_j(w')|-b|D_j(w')| & = a(|U_{j-2}(w)|+1)-b(|D_{j-2}(w)|+1)  \\
     & = (a-b)
+ a|U_{j-2}(w)|-b|D_{j-2}(w)|  \overset{w \in \Dyck_n^ {(a,b)}}{\geq} a-b\end{align*}
and
\begin{align*} a|U_j(w')|-b|D_j(w')| & = a(|U_{j+2}(w)|-1)-b(|D_{j+2}(w)|-1) \\
& = (b-a)
+ a|U_{j+2}(w)|-b|D_{j+2}(w)| \overset{w \in \Dyck_n^ {(a,b)}}{\geq} b-a.\end{align*}
As a consequence we have that $a|U_j(w')|-b|D_j(w')| \geq 0$ and hence $w' \in \Dyck_n^ {(a,b)}$.

The replacement of $UUDD$ by $UDUD$ creates exactly one new valley and preserves all the other valleys. Therefore, \[ V(w')=V(w)\cup\{(x,y)\} \] for some $(x,y)\notin V(w)$. Consequently, 
$V(w)$ is not a maximal face of $\Delta_{\Dyck_n}^{(a,b)}$. 

Conversely, suppose that $V(w)$ is not a maximal face of $\Delta_{\Dyck_n}^{(a,b)}$. Since $\Delta_{\Dyck_n}^{(a,b)}$ is a simplicial complex, there exist $(x,y)\notin V(w)$ and $w'\in\Dyck_n^{(a,b)}$ such that \[ V(w')=V(w)\cup\{(x,y)\}. \] Order the valleys in $V(w)$ in ascending order by their 
first coordinate and adjoin the initial and terminal points of the path. Let $(x',y')$ and $(x'',y'')$ be the two consecutive points in this order such that $x' < x < x''$.

The portion of $w$ between $(x',y')$ and $(x'',y'')$ contains no valley. Therefore, its corresponding subword is of the form $U^u D^d$  for some positive integers $u$ and $d$. On the other hand, the portion of $w'$ between the same two points contains exactly one valley, namely $(x,y)$. Hence its corresponding subword has the form \[ U^{u_1}D^{d_1}U^{u_2}D^{d_2}, \] where $u_1,u_2,d_1,d_2\geq 1$.  Since these two subpaths have the same initial and terminal points, they contain the same total numbers of $U$-steps and $D$-steps. Thus, \[ u=u_1+u_2\geq2 \qquad\text{ and }\qquad d=d_1+d_2\geq2. \] It follows that the factor $U^u D^d$ of $w$ contains $UUDD$ as a subfactor. This completes the proof. 

\end{proof}
     
  We will call a Dyck path $w \in \Dyck_n^{(a,b)}$ a $UUDD$-avoiding Dyck path if 
  $w$ has no factorization $w=w_1UUDDw_2$. By \ref{lem:max} the $UUDD$-avoiding Dyck paths 
  $w$ are
  exactly those for which $V(w)$ is a maximal face of $\Delta_{\Dyck_n}^{(a,b)}$.

Next, we define a linear order $\prec$ on the maximal faces of $\Delta^{(a,b)}_{\Dyck_n}$.
    By the injectivity of the map $V$ it suffices to define the order on 
    $(a,b)$-Dyck words.
    Let $v$ and $w$ be two $(a,b)$-Dyck paths corresponding to maximal faces of 
    $\Delta^{(a,b)}_{\Dyck_n}$.
    Let $V(v)=\big\{\,(x_1,y_1),\dots,(x_\ell,y_\ell)\,\big\}$ and
    $V(w) = \big\{\,(x_1',y_1'),\dots,(x_{\ell'}',y_{\ell'}')\,\big\}$ be the positions of the 
    valleys of $v$ 
    and $w$. Without loss of generality we will always 
    assume that $x_1<x_2<\cdots<x_\ell$ and $x_1' < x_2'< \cdots < x_{\ell'}'$. So in principle we
    consider $V(v)$ and $V(w)$ as tuples of valley positions.  
    Define $v \prec w$ if and only if $V(v) <_{\mathrm{lex}} V(w)$ in the lexicographic order
    induced by $(x,y)<(x',y')$ if $x<x'$ or $x=x'$ and $y<y'$.

 Now we are in position to prove \ref{thm:shellable}.
 
\begin{proof}[Proof of \ref{thm:shellable}]
    Let $\prec$ be the linear order on $\Delta^{(a,b)}_{\Dyck_n}$
    defined above.

    Let $v \prec w$ be $(a,b)$-Dyck words corresponding to maximal faces
    \[V(v) = \big\{\,(x_1,y_1),\dots,(x_\ell,y_\ell)\,\big\} \text{ and } V(w) = \big\{\,(x_1',y_1'),\dots,(x_{\ell'}',y_{\ell'}')\,\big\} \] of $\Delta^{(a,b)}_{\Dyck_n}$. As usual we assume
    $x_1 < \cdots < x_\ell$ and $x_1' < \cdots < x_{\ell'}'$.
    Suppose $k$ is minimal such that $(x_k', y_k')$ is a valley in $w$ but not 
    in $v$. Let $w = w_1DUw_2$ where $|w_1| = x_k-1$. We consider the case when $w_1$ ends with $D$ and
    the case when it ends with $U$ separately. 

\medskip
    \noindent {\sf Case: \textbf{$w_1 = w_1'D$}  }

\medskip

    Then $w = w_1' D D U w_2$, which implies
    \begin{align} \label{eq:w1D}
    \frac{|D(w_1')| + 2}{|U(w_1')| + 1} & \le \frac{a}{b}.
    \end{align}
    We define $z = w_1'DUDw_2$. Next we have to show that $z$ is an $(a,b)$-Dyck path corresponding to a maximal face of $\Delta_{\Dyck_n}^ {(a,b)}$. 
    
    Clearly, since $w$ does not contain $UUDD$ the path $z$ also contains no $UUDD$ pattern.
    Thus by \ref{lem:max} if $z$ is an $(a,b)$-Dyck path then it is maximal.
    
    To verify that $z \in \Dyck^{(a,b)}_n$, it suffices to check
    \[
    \frac{|D(w_1')| + 1}{|U(w_1')| + 1} \le \frac{a}{b},
    \]
    which follows immediately from \eqref{eq:w1D}. We conclude that 
    $V(z)$ is a maximal face of $\Delta_{\Dyck_n}^{(a,b)}$. In $z$, the valley $(x_k',y_k')$ is
    replaced by $(x_k' - 1, y_k' + 1)$. Consequently, we have $V(z) <_{\text{lex}} V(w)$ and
    $V(z) \cap V(w) = V(w) \setminus \{\,(x_k', y_k')\,\}$. Therefore, $z  \prec w$ and
    \[
    V(v) \cap V(w) \subseteq V(z) \cap V(w) = V(w) \setminus \{\,(x_k', y_k')\,\}.
    \]

    \medskip
    \noindent {\sf Case: \textbf{$w_1 = w_1'U$} }

\medskip

 \noindent We first show that:
 
 \smallskip

 \noindent {\sf Claim:} $w_1$ does not factor as $w_1''DU$. 

 \smallskip 
 
 By definition $(x_k',y_k')$ is the first valley in $V(w)$ that does not belong to $V(v)$. 
 Given that $v \prec w$, all valleys in $w$ occurring before $(x_k',y_k')$ coincide with those in $v$. If $w_1=w_1''DU$ then $(x_{k-1}',y_{k-1}') = (x_k'-2,y_k') \in V(v),V(w)$. 
 But by $v \prec w$ and the choice of $(x_k,y_k)$ the $k$th valley of $w$ must come at a position 
 $(x_k,y_k) < (x_k',y_k')$ which then is impossible.

 \medskip
 
 The claim implies that we are left to consider the case $w_1 = w_1''UU$. 
    Next, we consider the following three cases for the factorization of $w_2$. 
    
\medskip

\begin{itemize}
    \item[$\longrightarrow$] $w_2 = U  w_2'$ 
\end{itemize}
 Define $z = w_1'' U D U U U w_2'$.
Since the two steps preceding $(x_k',y_k')$ are both $U$, we may assume  that there are $\ell \geq 2$ steps $U$ between the $(k-1)$st and the $k$th 
valley of $w$. That is, $w$ can be further factored as
\[
w = w_1''' D U^\ell  D  U^2  w_2'.
\]
The  path $z$ then factors accordingly as 
\[
z = w_1''' D U^{\ell-1}  D  U^3  w_2'.
\]

Because $k$ is minimal such that $(x_k',y_k')$ is a valley in $w$ and not a valley in $v$, it follows from $v \prec w$ that the first $k-1$ valleys of $w$ are all in $v$, and the $k$th valley $(x_k,y_k)$ of $v$ has $(x_k,y_k) < (x_k',y_k')$. Hence we can write $v = w_1''' D U^{\ell'}  D  \overline{w_2}$ for some $\ell' < \ell$. Since $v \in \Dyck^{(a,b)}_n$, we have
\begin{align} \label{eq:w2UU}
\frac{|D(w_1''')| + 2}{|U(w_1''')| + \ell'} \le \frac{a}{b}.
\end{align}

To verify that $z \in \Dyck^{(a,b)}_n$, it suffices to check
\[
\frac{|D(w_1''')| + 2}{|U(w_1''')| + \ell - 1} \le \frac{a}{b},
\]
which follows immediately from \eqref{eq:w2UU} using $\ell' \le \ell - 1$.

    \medskip
    
    \begin{itemize}
        \item[$\longrightarrow$]  $w_2 = DU w_2''$:
  \end{itemize}
        
        Define $z = w_1'' U D U UDU w_2''$. By a discussion analogous to the one above, one verifies that $z \in \Dyck^{(a,b)}_n$.

  \medskip
  
  \begin{itemize}
      \item[$\longrightarrow$]
    $w_2 = D D w_2''$:
    \end{itemize}
    Define $z = w_1'' U D U D U Dw_2''$.
   By $w\in \Dyck^{(a,b)}_n$, we have
\begin{align} \label{eq:w2DD}
\frac{|D(w_1'')| + 3}{|U(w_1'')| + 3} & \le \frac{a}{b}.
\end{align}
To verify $z = w_1'' U D U D U D w_2'' \in \Dyck^{(a,b)}_n$, it suffices to check
\[
\frac{|D(w_1'')| + 1}{|U(w_1'')| + 1} \le \frac{a}{b} \quad\text{ and }\,\quad 
\frac{|D(w_1'')| + 2}{|U(w_1'')| + 2} \le \frac{a}{b},
\]
which follow immediately from \eqref{eq:w2DD}, since
\[
\frac{|D(w_1'')| + 1}{|U(w_1'')| + 1} \le \frac{|D(w_1'')| + 3}{|U(w_1'')| + 3}
\quad\text{and}\quad
\frac{|D(w_1'')| + 2}{|U(w_1'')| + 2} \le \frac{|D(w_1'')| + 3}{|U(w_1'')| + 3}.
\]

It is easily verified that in all above cases, the constructed path $z$ satisfies
$z \prec w$ and that
$
V(v) \cap V(w) \subseteq V(z) \cap V(w) = V(w) \setminus \{\,(x_k,y_k)\,\}.
$

We have now shown that $\prec$ is a shelling order.
In particular, $\Delta_{\Dyck_n}^{(a,b)}$ is shellable.
    For $a=b= 1$ we get as a special case that $\Delta_{\Dyck_n}$ is shellable.
\end{proof}

Basic facts about non-pure shellable complexes (see \cite[Theorem 4.1]{BjoWac96}) now imply that:

\begin{corollary}\label{cor:wedge1}
    For coprime positive integers $a,b$ the simplicial complex $\Delta_{\Dyck_n}^{(a,b)}$ is homotopy equivalent to a wedge of spheres of 
    possibly different dimensions. 
\end{corollary}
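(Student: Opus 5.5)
This follows from \ref{thm:shellable} together with the standard topological consequence of (non-pure) shellability due to Bj\"orner and Wachs \cite[Theorem 4.1]{BjoWac96}. That result states: if $\Delta$ is shellable with shelling order $\prec$ on its maximal faces, then $\Delta$ is homotopy equivalent to a wedge of spheres. Concretely, there is one sphere $S^{|\sigma|-1}$ for each maximal face $\sigma$ with $R(\sigma)=\sigma$.

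The plan is as follows.

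\begin{enumerate}
\item Invoke \ref{thm:shellable}. The lexicographic order $\prec$ on valley sets is a shelling order of $\Delta_{\Dyck_n}^{(a,b)}$ in the sense of \ref{sec:basic}, which is the non-pure notion of Bj\"orner--Wachs.

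\item Recall the mechanism behind \cite[Theorem 4.1]{BjoWac96}. Adding the maximal faces one at a time in shelling order, the new face $\tau$ meets the previously built subcomplex in a pure subcomplex of codimension one in $\partial\tau$. This is exactly the condition in our definition: every intersection $\sigma\cap\tau$ lies in some $\tau\setminus\{x\}$ already present. That subcomplex is generated by the facets $\tau\setminus\{x\}$ with $x\in R(\tau)$.
   \begin{itemize}
   \item If $R(\tau)\neq\tau$, this union is a cone, hence contractible, so the homotopy type is unchanged.
   \item If $R(\tau)=\tau$, the attachment is along the full boundary sphere, so a sphere $S^{|\tau|-1}$ is wedged on.
   \end{itemize}
   Induction over the shelling order then gives a wedge of spheres.

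\item Since maximal faces may have different cardinalities, the resulting spheres may have different dimensions. This is exactly the claimed statement.
\end{enumerate}

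There is no real obstacle, because the statement is a direct citation. The only point to check is that the paper's shellability definition matches the non-pure definition used in \cite{BjoWac96}. It does, since both require the intersection with earlier faces to be pure of dimension $\dim\tau-1$.

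The same argument also yields a count. The number of $(i-1)$-spheres is the entry $h_{ii}$ of the $h$-triangle, by \ref{prop:shelh}: it counts maximal faces with $|\sigma|=|R(\sigma)|=i$.
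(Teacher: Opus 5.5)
Your proposal is correct and matches the paper's argument: the corollary is \ref{thm:shellable} combined with \cite[Theorem 4.1]{BjoWac96}, and the paper's shelling condition is the Bj\"orner--Wachs non-pure one. One point in your recap is unjustified as stated: gluing a facet with $R(\tau)=\tau$ along its boundary wedges on a sphere only if that boundary is null-homotopic in the complex built so far, which one secures by rearranging the shelling (e.g.\ moving such facets to the end, so they are glued onto a contractible subcomplex); since you cite their theorem, this does not affect your proof.
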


The exact number of spheres and their dimensions can be read off from the $h$-vector. \ref{thm:typeA-f} and \ref{thm:typeA-h} study the $f$-vector and $f$-triangle, and
the $h$-vector and $h$-triangle 
of $\Delta_{\Dyck_n}^{(a,b)}$. Indeed, for a precise
description we will have to confine ourselves to $(a,b) = (1,1)$. 
Using \cite[Theorem 3.4]{BjoWac96}, for general $(a,b)$ we only have the following corollary of \ref{thm:shellable}.

\begin{corollary}\label{cor:wedge2}
    For coprime positive integers $a,b$ the 
    $h$-triangle $(h_{ij})_{1 \leq i\leq j \leq n-1}$ of
    $\Delta_{\Dyck_n}^{(a,b)}$ has only non-negative entries.
 \end{corollary}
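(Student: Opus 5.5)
This follows directly from \ref{thm:shellable} together with \ref{prop:shelh}, with no new geometric input. By \ref{thm:shellable}, the order $\prec$ on maximal faces of $\Delta_{\Dyck_n}^{(a,b)}$ (lexicographic on the ordered tuples of valley positions) is a shelling order. For each maximal face $V(w)$, with $w$ a $UUDD$-avoiding path by \ref{lem:max}, we form the restriction set $R(V(w))$ with respect to $\prec$. \ref{prop:shelh} then expresses each entry of the $h$-triangle as a cardinality:
\[
h_{ij} = \Big|\,\big\{\, V(w) \,:\, V(w) \text{ maximal},\ |V(w)| = i,\ |R(V(w))| = j \,\big\}\,\Big|.
$$
A cardinality of a finite set is a non-negative integer, so every $h_{ij}\geq 0$.

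The argument has three steps. First, observe that the definition of shelling used in this paper (for $\sigma\prec\tau$ there is $\gamma\prec\tau$ with $\sigma\cap\tau\subseteq\gamma\cap\tau=\tau\setminus\{x\}$) is exactly the non-pure shelling condition of Bj\"orner and Wachs. Hence \ref{prop:shelh} applies to non-pure complexes, and $\Delta_{\Dyck_n}^{(a,b)}$ is generally non-pure, since maximal faces can have different numbers of valleys. Second, invoke \ref{thm:shellable} and \ref{prop:shelh} as above. Third, reconcile the index range in the statement with the convention $0\le j\le i$ used in \cref{sec:basic}. Entries outside the range where maximal faces exist are zero, and zero is non-negative.

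The only step needing care is the bookkeeping of the indexing. The statement writes $(h_{ij})_{1\le i\le j\le n-1}$, whereas \cref{sec:basic} uses $0\le j\le i\le d$ with $i$ the size of the face. For general $(a,b)$ the dimension $d$ need not equal $n-1$. I would simply note that every entry $h_{ij}$, for any admissible pair of indices, is a count by \ref{prop:shelh}, so non-negativity holds regardless of the precise range.
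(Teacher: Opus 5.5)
Your proof is correct and is the paper's own argument. The paper obtains the corollary directly from the shelling order of \ref{thm:shellable} combined with \ref{prop:shelh} (Bj\"orner--Wachs, Theorem~3.4), which expresses each $h_{ij}$ as the number of facets of size $i$ whose restriction has size $j$; your remark that the stated index range is a slip (the convention is $0 \leq j \leq i$, and for general $(a,b)$ facet sizes are not bounded by $n-1$) is accurate and does not affect the conclusion.
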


Next, we describe the $f$-triangle of $\Delta_{\Dyck_n}$. For that 
and in 
the following considerations we write $\Dyck^ {\max}_n$ for the $UUDD$-avoiding Dyck paths in
$\Dyck_n$ or by \ref{lem:max} equivalently for the Dyck paths $w$ for which $V(w)$ is a maximal face of
$\Delta_{\Dyck_n}$.

Let $w\in\Dyck_n$ have valleys
$p_1 = (x_1,y_1),p_2=(x_2,y_2),\ldots,p_\ell=(x_\ell,y_\ell)$ listed such that
$x_1 < x_2 < \cdots < x_\ell$. 
We adjoin the starting point $p_0$ of $w$ and the terminal
point $p_{\ell+1}$ of $w$.
For $1\leq i\leq \ell+1$, let $u_i = u_i(w)$ and $d_i= d_i(w)$
denote the numbers of $U$-steps and $D$-steps, respectively, in
the portion of $w$ from $p_{i-1}$ to $p_i$.
We define 
$\mu(w)=\sum_{i=1}^{\ell+1}\min\bigl\{u_i,d_i\bigr\}-1$.
To the best of our knowledge, this invariant has not appeared in the literature. If $w$ is $UUDD$-avoiding, then $\mu(w) = \ell$ is the number of
valleys of $w$ and if $w = U^nD^n$, then $\mu(w) = n-1$. Thus $\mu(w) - \ell$ provides a measure of the fluctuation between valleys of $w$.

\begin{theorem}\label{thm:typeA-f}
 Let $(f_{ij})_{0 \leq j \leq i \leq n-1}$ be the $f$-triangle of 
    $\Delta_{\Dyck_n}$. Then 
\[
f_{ij}
=\Big|\,\bigl\{\,
w\in\Dyck_n
\,:\,
|V(w)|=j\text{ and }\mu(w)=i
\,\bigr\}\,\Big|,
\qquad
0\leq j\leq i\leq n-1.
\]
\end{theorem}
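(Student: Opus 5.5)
The plan is to compute the degree of the face $V(w)$ directly and show that $\deg(V(w)) = \mu(w)$. By the injectivity of $V$, the faces of $\Delta_{\Dyck_n}$ are in bijection with $\Dyck_n$, and $|V(w)| = j$ is the cardinality of the face. So once the degree is identified, the formula for $f_{ij}$ is just a restatement of the definition of the $f$-triangle. The whole proof therefore reduces to the equality
\[
\max\bigl\{\,|V(w')|~:~w'\in\Dyck_n,\ V(w)\subseteq V(w')\,\bigr\} \;=\; \sum_{i=1}^{\ell+1}\min\{u_i,d_i\}-1 .
\]

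\emph{Upper bound.} Let $w'\in\Dyck_n$ with $V(w)\subseteq V(w')$, and write $p_0,\dots,p_{\ell+1}$ for the valleys of $w$ together with the two endpoints. Then $w'$ passes through every $p_i$. Its portion between $p_{i-1}$ and $p_i$ starts with a $U$-step and ends with a $D$-step, since $p_{i-1}$ is a valley or the start and $p_i$ is a valley or the end. Hence this portion has the form $U^{a_1}D^{b_1}\cdots U^{a_k}D^{b_k}$ with all $a_s,b_s\ge 1$, $\sum a_s=u_i$ and $\sum b_s=d_i$. Consequently it contains $k-1\le \min\{u_i,d_i\}-1$ valleys strictly inside. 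Adding the $\ell$ valleys $p_1,\dots,p_\ell$ gives
\[
|V(w')|\le \ell+\sum_{i=1}^{\ell+1}\bigl(\min\{u_i,d_i\}-1\bigr)=\mu(w).
\]

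\emph{Lower bound.} We construct $w'$ segment by segment; the segment from $p_{i-1}$ starts at height $h\ge 0$.
\begin{itemize}
\item If $u_i\le d_i$, use $(UD)^{u_i-1}\,U D^{d_i-u_i+1}$. Its interior valleys all sit at height $h\ge 0$.
\item If $u_i>d_i$, use $U^{u_i-d_i+1}D\,(UD)^{d_i-1}$. Its interior valleys all sit at height $h+u_i-d_i\ge 0$, which is the height of $p_i$.
\end{itemize}
Each replacement segment stays weakly above the lower of its two endpoint heights, so the Dyck condition is preserved. Each segment still begins with $U$ and ends with $D$, so every $p_i$ remains a valley of $w'$. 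The resulting $w'$ has exactly $\mu(w)$ valleys and satisfies $V(w)\subseteq V(w')$.

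This gives $\deg(V(w))=\mu(w)$. As a sanity check, this matches the remarks before the theorem: $\mu(w)=\ell$ for $UUDD$-avoiding $w$ (consistent with \ref{lem:max}), and $\mu(U^nD^n)=n-1$.

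The only step requiring any care is the nonnegativity check in the construction, since the new interior valleys must not drop below height $0$. Choosing where to put the extra steps, as above, resolves it. Everything else is bookkeeping with the definition of the degree.
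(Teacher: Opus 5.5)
Your proof is correct and follows essentially the same route as the paper. It uses the same segment-by-segment bound (at most $\min\{u_i,d_i\}$ blocks between consecutive points $p_{i-1},p_i$) and the same replacement words $(UD)^{u-1}UD^{d-u+1}$ and $U^{u-d+1}D(UD)^{d-1}$; the only differences are that you bound over all faces containing $V(w)$ rather than only $UUDD$-avoiding ones, and you skip the paper's extra check that the constructed path is $UUDD$-avoiding, which the degree computation does not need.
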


\begin{proof}
Fix $w\in\Dyck_n$ with
$V(w)=\big\{\,p_1 = (x_1,y_1),p_2=(x_1,y_2),\ldots, p_\ell=(x_\ell,y_\ell)\,\big\}$
indexed such that $x_1 < \cdots < x_\ell$. Define $p_0$, $p_{\ell+1}$ and 
$d_i,u_i$ for $1 \leq i \leq \ell+1$ as above.

Since for $1 \leq i \leq \ell+1$ there are no valleys of $w$ strictly between
$p_{i-1}$ and $p_i$, the corresponding subword of $w$ between the two valleys is
$U^{u_i}D^{d_i}$ for some $u_i, d_i > 0$.

Let $z\in\Dyck^ {\max}_n$ be a $UUDD$-avoiding Dyck path such that
$
V(w)\subseteq V(z).
$
Consider the subword $z_i$ of $z$ between the valleys $p_{i-1}$ and $p_i$.
This subword can be factored as
\[
z_i = U^{\alpha_1}D^{\beta_1}
U^{\alpha_2}D^{\beta_2}
\cdots
U^{\alpha_{s_i}}D^{\beta_{s_i}},
\]
where $\alpha_j,\beta_j > 0$ for $1 \leq j \leq s_i$. Since the
subpath defined by $z_i$ has the same starting point $p_{i-1}$ and
the same endpoint $p_{i}$ as the corresponding subpath of
$w$, we have
\[
\sum_{j=1}^{s_i}\alpha_j=u_i,
\qquad
\sum_{j=1}^{s_i}\beta_j=d_i.
\]
In particular,
\[
s_i\leq u_i
\qquad\text{and}\qquad
s_i\leq d_i,
\]
and hence
\[
s_i\leq\min\{u_i,d_i\}.
\]

The subword $z_i$ contributes $s_i-1$
valleys to $V(z) \setminus V(w)$. By $V(w) \subseteq V(z)$ 
the $\ell$ valleys 
$p_1,\ldots, p_\ell$ are inherited by $z$ from $w$. 
Therefore,
\[
|V(z)|
=
\ell+\sum_{i=1}^{\ell+1}(s_i-1)
=
\sum_{i=1}^{\ell+1}s_i-1.
\]
It follows that
\[
|V(z)|
\leq
\sum_{i=1}^{\ell+1}\min\{u_i,d_i\}-1
=
\mu(w).
\]
We conclude that,
\[
\deg\bigl(V(w)\bigr)\leq\mu(w).
\]

Next, we construct a $UUDD$-avoiding Dyck path $z$ attaining this
upper bound. For positive integers $u$ and $d$, define
\[
Q(u,d)
=
\begin{cases}
(UD)^{u-1}UD^{\,d-u+1},
   &\text{if }u\leq d,\\
U^{\,u-d+1}D(UD)^{d-1},
   &\text{if }d\leq u.
\end{cases}
\]
In either case, $Q(u,d)$ contains exactly $u$ up-steps and $d$
down-steps, and it consists of precisely
$\min\{u,d\}$
maximal blocks of the form $U^\alpha D^\beta$ for $\alpha,\beta > 0$. Moreover, each
such block has either $\alpha=1$ or $\beta=1$. Consequently,
$Q(u,d)$ contains no factor $UUDD$.

For every $i$, replace in $w$ the subword 
$U^{u_i}D^{d_i}$ between $p_{i-1}$ and $p_i$ 
by $Q(u_i,d_i)$, and denote the resulting word by
$z$. By construction, each replacement preserves the initial and terminal
points of the corresponding subpath. 

Next, we verify that $z$ is a Dyck path. Suppose that the
portion under consideration begins at height $h$.

If $u_i\leq d_i$, the initial factors $UD$ in
$Q(u_i,d_i)$ return repeatedly to height $h$, and the final
factor
$UD^{\,d_i-u_i+1}$
descends to the original terminal height
$h+u_i-d_i\geq0.$
Hence this subpath never goes below height $0$.

If $d_i\leq u_i$, the initial factor
$U^{\,u_i-d_i+1}D$
reaches the terminal height $h+u_i-d_i$, and every subsequent
factor $UD$ stays weakly above that height. Thus, this portion
also remains weakly above height $0$.
It follows that $z\in\Dyck_n$. 

Since each replacement word $Q(u_i,d_i)$ 
begins with $U$ and ends with $D$, all
$p_1,\ldots, p_\ell$ are also valleys of $z$. Therefore,
we have $V(w)\subseteq V(z).$
Furthermore, each subword $Q(u_i,d_i)$ is $UUDD$-avoiding, and the
junction between two consecutive subwords is of the form $DU$.
Hence $z$ is $UUDD$-avoiding. By
\ref{lem:max}, $V(z)$ is a maximal face of
$\Delta_{\Dyck_n}$.

Finally, the subword $Q(u_i,d_i)$ contributes
$\min\{u_i,d_i\}-1$
new valleys. Hence
\[
|V(z)|
=j+\sum_{i=1}^{\ell+1}
\bigl(\min\{u_i,d_i\}-1\bigr)
=\mu(w).
\]
Thus we have
\[
\deg\bigl(V(w)\bigr)\geq\mu(w).
\]
Combining the two inequalities gives
\[
\deg\bigl(V(w)\bigr)=\mu(w).
\]
The claimed description of $f_{ij}$ now follows from the definition of the
$f$-triangle.
\end{proof}

Now we explore the row and column sums of the $f$-triangle.

\begin{corollary}
Let $(f_{ij})_{0\leq j\leq i\leq n-1}$ be the $f$-triangle of
$\Delta_{\Dyck_n}$.

\begin{enumerate}
\item
[(i)]
Then the column sum $f_{j-1} = \displaystyle{\sum_{i \geq 0}} f_{ij}$ is the Narayana number $N_{n,j+1}$.
        In particular, the $f$-vector
        $(f_{-1},\ldots, f_{n-2})$
        of $\Delta_{\Dyck_n}$ is $(N_{n,1},\ldots, N_{n,n})$.

\item
[(ii)]
For $0\leq i\leq n-1$, the sum of the entries in the $i$th row 
satisfies
\[
\sum_{j=0}^{i}f_{ij}
=
\Big|\,\bigl\{\,w\in\Dyck_n\,:\,\mu(w)=i\,\bigr\}\,\Big|.
\] 

\end{enumerate}
\end{corollary}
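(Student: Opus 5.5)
Both parts follow from \ref{thm:typeA-f} by summing, together with the classical enumeration of Dyck paths by valleys (equivalently, by peaks).

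For (i), the identity $f_{j-1} = \sum_{i} f_{ij}$ holds for any simplicial complex: every face $\sigma$ with $|\sigma| = j$ has some degree $i$ with $j \le i \le n-1$. So the column sum counts the faces of cardinality $j$. By injectivity of $V$, this is the number of $w \in \Dyck_n$ with $|V(w)| = j$. Equivalently, by \ref{thm:typeA-f}, summing over $\mu(w)$ gives the same count. A Dyck path with $j$ valleys has exactly $j+1$ peaks, since peaks and valleys alternate and the path starts with $U$ and ends with $D$. It therefore remains to invoke the classical fact that the number of Dyck paths of semilength $n$ with $k$ peaks is $N_{n,k}$.

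If a self-contained argument for this fact is wanted, I would use a standard cycle-lemma or reflection count. Alternatively, one can use the bijection to noncrossing partitions: the number of blocks corresponds to the number of peaks, and noncrossing partitions are counted by Kreweras' formula. I would simply cite it.

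Setting $k = j+1$ and letting $j$ range over $0,\dots,n-1$ gives $(f_{-1},\dots,f_{n-2}) = (N_{n,1},\dots,N_{n,n})$. The case $j=0$ is the empty face, corresponding to $U^nD^n$, and indeed $N_{n,1} = 1$. This also agrees with the antichain description via \cite{Rin18}.

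For (ii), summing the formula of \ref{thm:typeA-f} over $j$ from $0$ to $i$ gives the set of all $w \in \Dyck_n$ with $\mu(w) = i$. To see that no path is missed, note that $|V(w)| \le \mu(w)$ always holds, because $\mu(w) - |V(w)| = \sum_i (\min\{u_i,d_i\} - 1) \ge 0$. Hence every $w$ with $\mu(w) = i$ has $|V(w)| = j$ for some $j \le i$ and is counted.

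The only step with genuine content is the Narayana count of Dyck paths by peaks; everything else is bookkeeping. I expect no real obstacle beyond stating the valley–peak correspondence correctly, namely that $j$ valleys give $j+1$ peaks, so the index shift to $N_{n,j+1}$ is right.
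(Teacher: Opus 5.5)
Your proposal is correct and takes the same route as the paper. For (i), both identify faces of cardinality $j$ with Dyck paths having $j$ valleys, hence $j+1$ peaks, and cite the Narayana count; for (ii), both sum \ref{thm:typeA-f} over $j$, and your explicit check that $|V(w)|\le\mu(w)$ (so no path is missed when $j$ only runs up to $i$) spells out a step the paper calls immediate.
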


\begin{proof}
The faces of $\Delta_{\Dyck_n}$ of cardinality $j$ are in
bijection with the Dyck paths of semilength $n$ having exactly
$j$ valleys. Equivalently, these paths have $j+1$ peaks. Their
number is the Narayana number
\[
N_{n,j+1}
=
\frac{1}{n}
\binom{n}{j+1}\binom{n}{j};
\]
see \cite{Deu98}. This proves~(i).

Part~(ii) follows immediately from
\ref{thm:typeA-f}.

\end{proof}

Note that the number $\sum_{i \geq 0} f_{ii}$ counts the maximal faces of 
$\Delta_{\Dyck_n}$. The corresponding sequence of numbers can be found in 
the Online Encyclopedia of Integer Sequences \cite{OEIS} as A086581 (or A082582).
This follows from \ref{lem:max}
and the description of A086581.

The number $f_{ii}$ of Dyck paths in $\Dyck^{\max}_n$ with $i$ valleys seems to coincide with 
A271942 in \cite{OEIS}. In that case, the description of the sequence does not easily
match the definition of $f_{ii}$.

\begin{conjecture}
    The number $f_{ii}$ of Dyck paths in $\Dyck^{\max}_n$ with $i$ valleys coincides with 
    A271942 in \cite{OEIS}.
\end{conjecture}

Now we turn to the study of the $h$-triangle. 
Let $ w \in \Dyck_{n}^{\max} $ and let $\prec$ be the shelling order from 
the proof of \ref{thm:shellable}. If we translate the definition of
the restriction
of a maximal face $w$ from \ref{prop:shelh} to this situation, we get:
\[
R(w) = \big\{\,p \in V(w)\,:\,  V(w)\setminus\{p\} \subseteq V(w') \text{ for some } w' \in \Dyck_{n}^{\max} \text{ with } w' \prec w\,\big\}.
\]
By \ref{prop:shelh}, we have
$$
h_{i\,j} = \Big|\,\big\{\,w \in \Dyck_{n}^{\max} \,:\, |V(w)| = i \text{ and } |R(w)| = j\,\big\}\,\Big|.
$$
For the purpose of defining the concept of \textit{successive valleys}, we regard the initial point $(0,0)$ 
as a virtual valley. More precisely, a valley $(x,y)\in V(w)$ is called \emph{successive} if \[ (x-2,y)\in V(w)\cup\{(0,0)\}. \] Thus, whenever $(2,0)$ is a valley of $w$, then it is
regarded as successive. We denote by
\[
\suc(w)
=
\Big|\,\left\{
(x,y)\in V(w)
\,:\,
(x-2,y)\in V(w)\cup\{(0,0)\}
\right\}\,\Big|.
\] 
 the number of successive valleys in $w$.

We can now interpret the $h$-triangle in terms of statistics on
Dyck paths.

\begin{theorem}\label{thm:typeA-h}
For $0\leq j\leq i\leq n-1$,
\[
h_{i\,j} = \Big|\,\big\{\,w \in \Dyck_{n}^{\max}\,:\, |V(w)| = i \text{ and } \suc(w) = i - j\,\big\}\,\Big|.\]
\end{theorem}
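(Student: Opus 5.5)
The plan is to identify the restriction set exactly: for $w\in\Dyck_n^{\max}$ and the shelling order $\prec$ from the proof of \ref{thm:shellable} (with $(a,b)=(1,1)$), we show that a valley $p\in V(w)$ lies in $R(w)$ if and only if $p$ is \emph{not} successive. Then $|R(w)| = |V(w)|-\suc(w)$, and \ref{prop:shelh} immediately gives $h_{ij}=|\{w\in\Dyck_n^{\max} : |V(w)|=i,\ \suc(w)=i-j\}|$.

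\emph{Non-successive valleys lie in $R(w)$.} Let $p=(x,y)$ be a non-successive valley and write $w=w_1DUw_2$ with $p$ reached after $w_1D$. The argument follows the case analysis in the proof of \ref{thm:shellable}, except that we no longer compare with a given $v$; we only need some $z\prec w$ in $\Dyck_n^{\max}$ with $V(w)\setminus\{p\}\subseteq V(z)$.

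If $w_1$ ends in $D$, take $z=w_1'DUDw_2$. Then $p$ is replaced by $(x-1,y+1)$, and $z$ is lexicographically smaller.

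If $w_1$ ends in $U$, then $w_1$ cannot end in $DU$, since then $(x-2,y)$ would be a valley and $p$ would be successive. It also cannot equal $U$, since then $p=(2,0)$, which is successive. Hence $w_1=w_1''UU$, and we use the three constructions according to whether $w_2$ begins with $U$, $DU$ or $DD$. In each of them, the only change to $V(w)$ is that $p$ is replaced by a new valley with a strictly smaller first coordinate, so $z\prec w$. Each $z$ is $UUDD$-avoiding, hence maximal by \ref{lem:max}. For $(a,b)=(1,1)$, the Dyck condition is just that the new valley has height equal to the height of a point already on $w$ (for instance after $w_1''$), which is $\ge 0$. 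This replaces the appeal to $v$ in the first subcase.

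\emph{Successive valleys do not lie in $R(w)$.} Let $p=(x,y)$ be successive, so $q=(x-2,y)\in V(w)\cup\{(0,0)\}$, and $w$ reads $\cdots q\,UD\,p\cdots$. Suppose $w'\prec w$ is maximal with $V(w)\setminus\{p\}\subseteq V(w')$. All valleys of $w$ other than $p$ lie in $w'$, so the first place where the sorted tuples $V(w')$ and $V(w)$ differ is the slot of $p$.

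In the first case $w'$ has a valley $p'<p$ in that slot. Then $p'$ comes after $q$, so its first coordinate is $\ge x-2$, and by lex order it is $\le x$. The path $w'$ must leave $q$ (or the origin) with a $U$ and make at least one $D$ before a valley, so $p'$ has first coordinate exactly $x$ and height $y$, i.e.\ $p'=p$. This is a contradiction.

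In the second case $V(w')$ is a proper prefix of $V(w)$. Then $p$ is the last valley of $w$ and $w'$ ends with a single block $U^aD^{a+y}$ after $q$. Maximality of $w'$ forces $a=1$, while $w$ ends with $UD\,U^bD^{b+y}$ with $b\ge1$. Counting $U$-steps gives $1=1+b$, a contradiction.

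The main obstacle is the first direction. One must recheck that the three constructions in the proof of \ref{thm:shellable} produce a lexicographically smaller maximal $z$ whose only lost valley is $p$, without the auxiliary path $v$ that was used there to certify the Dyck condition. The key point is that $z$ still has every valley of $w$ before $p$, so the first difference from $w$ is the new earlier valley.
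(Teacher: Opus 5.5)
Your strategy is the paper's. You identify $R(w)$ with the set of non-successive valleys, which the paper calls the first valleys of the blocks not containing $(2,0)$. For membership you use the same replacements $w_1'DUDw_2$, $w_1''UDUUw_2$ and $w_1''UDUDUDw_2''$, and that direction is correct. One harmless misstatement there: the replacement does not always just move $p$. For $w=w_1'DDUUw_2'$ the path $z$ also gains the valley $(x+1,y+1)$, and in the $DD$ subcase both $(x-1,y-1)$ and $(x+1,y-1)$ appear. This does not matter, since you only need $z\prec w$ and $V(w)\setminus\{p\}\subseteq V(z)$.

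The gap is in the other direction, at the sentence ``All valleys of $w$ other than $p$ lie in $w'$, so the first place where the sorted tuples differ is the slot of $p$.'' This does not follow. A priori $w'$ may have valleys outside $V(w)$ interleaved among the valleys of $w$ before $q$. The tuples then first differ at such an extra valley, which already forces $V(w')<_{\mathrm{lex}}V(w)$. Your local argument (leave $q$ by a $U$, then at least one $D$) only excludes an extra valley between $q$ and $p$.

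Your reasoning never uses that $V(w)$ is maximal, and without maximality the conclusion is false. Take the non-maximal $w=UUDDUUDUDUDD$, with $V(w)=\{(4,0),(7,1),(9,1)\}$ and successive valley $p=(9,1)$. The $UUDD$-avoiding path $w'=UDUDUUDUDDUD$ has $V(w')=\{(2,0),(4,0),(7,1),(10,0)\}\supseteq V(w)\setminus\{p\}$. Because of the extra valley $(2,0)$, it also satisfies $V(w')<_{\mathrm{lex}}V(w)$.

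The missing ingredient is the paper's splicing argument. Suppose $w'$ had a valley $e\notin V(w)$ strictly between two consecutive points $r,s$ of $V(w)\cup\{(0,0)\}$, both preceding $p$; then $r,s\in V(w')\cup\{(0,0)\}$. Replace the segment of $w$ from $r$ to $s$ by the segment of $w'$ between the same points. This gives a Dyck path in which $r$ and $s$ remain valleys, since $w'$ leaves $r$ by a $U$ and enters $s$ by a $D$. Moreover $e$ is an additional valley, so the valley set properly contains $V(w)$, contradicting the maximality of $V(w)$.

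With this added, your two cases finish the proof. Your second case can be dispatched in one line: if $V(w')$ is a proper prefix of $V(w)$, then $V(w')\subsetneq V(w)$, contradicting the maximality of $V(w')$. The paper's write-up passes over this case without comment.
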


\begin{proof}
Let $w\in\Dyck_n^{\max}$ have $i$ valleys, of which $i-j$ are
successive. Partition the valleys of $w$ into maximal blocks of
consecutive valleys.

If $(2,0)\notin V(w)$, each block with $r$ valleys contributes
$r-1$ successive valleys. Hence, the number of blocks is
$i-(i-j)=j.$
If $(2,0)\in V(w)$, the block containing $(2,0)$ contributes one
additional successive valley because $(0,0)$ is regarded as an
auxiliary valley. Hence, in this case, the total number of blocks
is $j+1$.

We claim that $R(w)$ consists precisely of the first valleys of the blocks not containing $(2,0)$.

We first show that no valley other than the first valley of a
block belongs to $R(w)$. Let
\[
p=(x,y)
\qquad\text{and}\qquad
q=(x+2,y)
\]
be two consecutive valleys in the same block. Suppose, for a
contradiction, that $q\in R(w)$. Then there exists
$w'\in\Dyck_n^{\max}$ such that
\[w'\prec w
\qquad\text{and}\qquad
V(w)\setminus\{q\}\subseteq V(w').
\]
In particular, every valley of $w$ preceding $q$, including $p$,
also belongs to $V(w')$.
Since $w'\prec w$, the first valley at which the ordered valley sequences of $w'$ and $w$ differ must occur before $q$. Such a new valley cannot occur between $p$ and $q$, since consecutive valleys are separated by the shortest possible subpath $UD$. 
Next we argue, that it also cannot occur between two earlier consecutive
preserved valleys of $w$. Indeed, replacing the corresponding
subpath of $w$ by the subpath of $w'$ between the same endpoints
would produce a Dyck path whose valley set properly contains
$V(w)$. This would contradict the fact that $V(w)$ is a maximal face.
Consequently, no new valley of $w'$ can occur before $q$.
Therefore, the first difference between the valley sequences of
$w$ and $w'$ occurs at $q$. Since $q\notin V(w')$, this implies
$w\prec w',$ a contradiction. Thus, $q\notin R(w).$

The valley $(2,0)$ does not belong to $R(w),$ since it is the
earliest possible valley of a Dyck path. Indeed, if
\[
V(w)\setminus\{(2,0)\}\subseteq V(w')
\]
for some other maximal face $V(w')$, then the first valley at which
$w'$ differs from $w$ must occur after $(2,0)$, and hence
$w\prec w'$. Therefore, $(2,0)\notin R(w).$

It remains to show that the first valley of every block not containing $(2,0)$ belongs to $R(w)$. Let $p = (x,y)$ be such a valley, represented by the factor $DU$ in $w = w_1DUw_2$. We distinguish two cases.

\medskip
\noindent\textsf{Case 1: $(x+2,y) \not\in V(w)$.}
First, suppose $w_1 = w_1'D$ and  
$w=w_1'DDUw_2.$
Define
$w'=w_1'DUDw_2.$
It replaces $p$ by an
earlier valley and preserves all other valleys. Thus $w' \prec w$ and
$V(w) \setminus \{p\} \subseteq V(w')$. We conclude $p \in R(w)$.

Next, suppose $w_1 = w_1'U$ and $w=w_1'UDUw_2.$ 
Since
$p$ is the first valley of its block and $p\neq(2,0)$, the
preceding step must also be a $U$-step. Hence,
$w=w_1''UUDUw_2.$

If $w_2$ does not begin with $DD$, define
$w'=w_1''UDUUw_2$ which then is $UUDD$ avoiding.
The only prefix of $w'$ representing a path ending below the 
path for the corresponding prefix of
$w$ is $w_1''UD$. The height at this point is the same as the height
after $w_1''$ and hence $\geq 0$.
Thus, $w'$ is a Dyck path.

Now consider the case $w_2=DDw_2'$. Then we set 
$w' = w_1''UDUDUDw_2'$. Since $w$ is $UUDD$-avoiding it follows
that $w'$ is $UUDD$-avoiding. By an argument similar to the ones above
we also conclude that $w'$ is a Dyck path. 

We have shown that in each subcase, 
$w'\in\Dyck_n^{\max}.$
Moreover, the first new valley of $w'$ precedes $p$ and every valley
of $w$ other than $p$ is preserved. Therefore, we have
\[
w'\prec w,
\qquad
V(w')\cap V(w)=V(w)\setminus\{p\}.
\]
It follows that $p\in R(w)$.

\medskip

\noindent\textsf{Case 2: $(x+2,y) \in V(w)$.}
In that case $w_2 = DUw_2'$ and $w = w_1DUDUw_2'$. 
 
First, suppose $w_1 = w_1'D$ and $w=w_1'DDUDUw_2'$.
Define
$w'=w_1'DUDDUw_2'.$
This replacement moves a $U$ to the left and hence preserves
the Dyck path condition.

Next, suppose that $w_1 = w_1'U$ and $w=w_1'UDUDUw_2'$.
Since $p$ is the first valley of its block and is different from 
$(2,0)$ we must have $w_1' = w_1''U$ with a possibly empty 
$w_1''$. Hence we have $w=w_1''UUDUDUw_2'$.
Define
$w'=w_1''UDUUDUw_2'$.
The only prefix of $w'$ representing a path ending below the
path represented by the corresponding prefix of $w$ is 
$w_1''UD$. Since the height of the endpoint of this path is the
same as the height at the end of $w_1''$ it follows that $w'$ is
a Dyck path. 

In both subcases, the $w'$ is easily seen to be $UUDD$-avoiding,
satisfies
$w'\prec w,$
and has
$V(w')\cap V(w)=V(w)\setminus\{p\}.$
Again we conclude $p\in R(w)$.

This shows that $R(w)$ consists precisely of the first valleys
of the blocks not containing $(2,0)$. If $(2,0)\notin V(w)$,
there are $j$ such blocks. If $(2,0)\in V(w)$, there are $j+1$
blocks in total, but the block containing $(2,0)$ contributes
nothing to $R(w)$. Hence, in both cases,
$|R(w)|=j.$
The result now follows from the interpretation of
$h_{ij}$ for the shellable complex $\Delta_{\Dyck_n}$.
\end{proof}

The next corollary follows from \ref{thm:typeA-h} and \cite[Theorem 4.1]{BjoWac96}.

\begin{corollary}
    The complex $\Delta_{\Dyck_n}$ has the homotopy type of a wedge of spheres. 
    The number of spheres of dimension $i-1$ in the wedge is
    
    $$h_{i\,i}=\Big|\,\Big\{\,w\in \Dyck_n^{\max}\,:\,\genfrac{}{}{0pt}{}{|V(w)|=i \text{ and}}{\suc(w) = 0} \,\Big\}\,\Big|.$$
    In particular, $$\rk_\ZZ(\,\tilde{H}_{i-1}(\Delta_{\Dyck_n},\ZZ)\,)=
    \Big|\,\Big\{\,w\in \Dyck_n^{\max}\,:\,\genfrac{}{}{0pt}{}{|V(w)|=i \text{ and}}{\suc(w) = 0} \,\Big\}\,\Big|.$$
\end{corollary}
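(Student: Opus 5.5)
The corollary follows by combining the shellability of $\Delta_{\Dyck_n}$ from \ref{thm:shellable} with the Björner--Wachs description of the homotopy type of a non-pure shellable complex, \cite[Theorem 4.1]{BjoWac96}. That result says the following. If $\Delta$ is shellable with shelling order $\prec$, then $\Delta$ is homotopy equivalent to a wedge of spheres. The spheres are indexed by the maximal faces $\sigma$ with $R(\sigma)=\sigma$, and the sphere for $\sigma$ has dimension $\dim\sigma=|\sigma|-1$. So the first step is to use the shelling order $\prec$ from the proof of \ref{thm:shellable} with $(a,b)=(1,1)$. The number of $(i-1)$-spheres then equals the number of maximal faces $V(w)$ with $|V(w)|=i$ and $|R(w)|=i$. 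By \ref{prop:shelh}, this number is exactly $h_{i\,i}$.

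The second step is to translate $h_{i\,i}$ into Dyck path language using \ref{thm:typeA-h} with $j=i$. This gives
\[
h_{i\,i}=\Big|\,\big\{\,w\in\Dyck_n^{\max}\,:\,|V(w)|=i\text{ and }\suc(w)=0\,\big\}\,\Big|,
\]
which is the first claimed formula. As a check, the proof of \ref{thm:typeA-h} shows that $R(w)$ consists of the first valleys of the blocks not containing $(2,0)$. Hence $R(w)=V(w)$ holds exactly when every block is a singleton and $(2,0)\notin V(w)$, which is the condition $\suc(w)=0$.

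The final step is the homology statement. A wedge of spheres has free reduced homology, and $\tilde H_{i-1}$ has rank equal to the number of $(i-1)$-dimensional spheres in the wedge. Homotopy invariance of homology then gives the formula for $\rk_\ZZ\tilde H_{i-1}(\Delta_{\Dyck_n},\ZZ)$.

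I do not expect a real obstacle here. The only point needing care is matching conventions. The Björner--Wachs theorem counts spheres by faces $\sigma$ with $R(\sigma)=\sigma$, and these are the facets counted by the diagonal $h_{i\,i}$ under the indexing of \ref{prop:shelh}, where $i=|\sigma|$. With that indexing, a facet of size $i$ contributes a sphere of dimension $i-1$, as the corollary states.
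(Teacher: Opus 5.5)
Your proposal is correct and matches the paper's argument. The paper also obtains the corollary directly from \ref{thm:typeA-h} with $j=i$ together with \cite[Theorem 4.1]{BjoWac96}, whose $(i-1)$-spheres are counted by the diagonal entry $h_{i\,i}$ for the shelling order constructed in the proof of \ref{thm:shellable}.
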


The following is an immediate consequence of \ref{thm:typeA-f}, \ref{thm:typeA-h} together with
the transformation formulas \cite[Definition 3.1,(3.4)]{BjoWac96}.

\begin{corollary} \label{cor:fh-ident}
\begin{align*}
     \Big|\,\Big\{\,&w \in \Dyck_{n}^{\max}\, :\, |V(w)| = i \text{ and } \suc(w) = i - j \,\Big\}\,\Big|  = \\ & \sum_{k=0}^ j (-1)^ {j-k} \binom{i-k}{j-k}  \Big|\,\Big\{ w \in \Dyck_n~:~|V(w)| = k \text{ and}\ \mu(w)=i \Big\}\,\Big|\\
    \Big|\,\Big\{\,& w \in \Dyck_n~:\, |V(w)| = j  \text{ and } \mu(w)=i\, \Big\}\,\Big|  = \\
     & \sum_{k=0}^j \binom{i-k}{i-j}  \Big|\,\Big\{\,w \in \Dyck_{n}^{\max}\, :\,  |V(w)| = i \text{ and } \suc(w) = i - k \,\Big\}\,\Big|.
    \end{align*}
\end{corollary}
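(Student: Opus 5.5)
The plan is to derive both identities from the relation $\fh^\Delta(x,y)=\ff^\Delta(x,y-1)$ defining the $h$-triangle. We substitute the combinatorial descriptions: $f_{ij}$ from \ref{thm:typeA-f} (Dyck paths with $j$ valleys and $\mu(w)=i$) and $h_{ij}$ from \ref{thm:typeA-h} ($UUDD$-avoiding paths with $i$ valleys and $\suc(w)=i-j$). Nothing beyond extracting coefficients is needed, since both theorems are already established for $\Delta_{\Dyck_n}$ with $\dim = n-2$.

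First expand
\[
\ff^\Delta(x,y-1)=\sum_{k\le i} f_{ik}\,x^i (y-1)^{i-k}=\sum_{k\le i} f_{ik}\,x^i\sum_{m}\binom{i-k}{m}y^m(-1)^{i-k-m}.
\]
Fixing the degree $i$ in $x$, we collect the coefficient of $y^{i-j}$, i.e.\ take $m=i-j$, so that $k\le j$. This gives
\[
h_{ij}=\sum_{k=0}^{j}(-1)^{j-k}\binom{i-k}{i-j}f_{ik}.
\]
Since $\binom{i-k}{i-j}=\binom{i-k}{j-k}$, this is exactly the first displayed identity once the combinatorial descriptions are inserted.

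For the inverse we use $\ff^\Delta(x,y)=\fh^\Delta(x,y+1)$ and run the same expansion with $(y+1)^{i-k}$ in place of $(y-1)^{i-k}$. This yields $f_{ij}=\sum_{k=0}^{j}\binom{i-k}{i-j}h_{ik}$. Substituting $h_{ik}$ from \ref{thm:typeA-h}, which counts paths with $\suc(w)=i-k$, gives the second identity. Equivalently, one can invoke \cite[Definition 3.1,(3.4)]{BjoWac96} directly.

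The only point needing care is bookkeeping, not mathematics. We must match the index conventions, in particular the exponent $i-j$ in the two-variable polynomials, and note that the row index $i$ is preserved by the substitution $y\mapsto y\pm1$. That is what makes each identity a relation within a single row $i$.
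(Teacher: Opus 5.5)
Your proposal is correct and takes essentially the paper's route. The paper also obtains both identities by substituting the descriptions of $f_{ij}$ and $h_{ij}$ from \ref{thm:typeA-f} and \ref{thm:typeA-h} into the transformation formulas of \cite[Definition 3.1,(3.4)]{BjoWac96}; your coefficient extraction from $\fh^\Delta(x,y)=\ff^\Delta(x,y-1)$ and its inverse just writes those formulas out, and your signs and binomials $\binom{i-k}{i-j}=\binom{i-k}{j-k}$ are right.
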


It would be interesting to find a more direct combinatorial proof of the preceding corollary.

\begin{question}
    Is there a direct proof of \ref{cor:fh-ident} based on the combinatorics of Dyck paths?
\end{question}

Even though the $f$- and $h$-triangles of the complex 
$\Delta_{n}^{(a,b)}$ of rational Dyck paths are non-negative (for the $h$-triangle this is
\ref{cor:wedge2}), we were not able to extend \ref{thm:typeA-f} and \ref{thm:typeA-h} to rational Dyck paths. Of particular interest is the case $(a,b) = (1,b)$ in which the
number of rational Dyck paths is counted by the Fuss-Catalan numbers.
We refer the reader to \cite{Sta15} for background
on Fuss-Catalan numbers and ask the following question.

\begin{question}
    Which statistics on the rational Dyck paths $\Dyck_n^{(a,b)}$ are underlying the 
    $f$- and $h$-triangles of $\Delta_n^ {(a,b)}$?
\end{question}

In the Appendix we have listed a few examples of the $f$- and $h$-triangles for
$\Delta_n^{(1,b)}$ for $n=3,4,5,6$ and $b=2,3$ as inspiration for studying the question.

\section{Type \texorpdfstring{$B_n$}~: symmetric Dyck paths} \label{sec:b}

A Dyck path
$w=w_1w_2\cdots w_{2m}$
of semilength $m$ is called \textit{symmetric} if
\[
w_{2m+1-i}=\overline{w_i}
\qquad\text{for all }1\leq i\leq 2m,
\]
where $\overline{U}=D$ and $\overline{D}=U$. 
Geometrically, such a path is symmetric with respect to
the vertical line $x=m$. For a word $w = w_1\cdots w_k$ with letters
$w_1,\ldots, w_k \in \{D,U\}$ we write 
$\overline{w}$ for the word $\overline{w_1}\cdots \overline{w_k}$ and 
$w^{\rev}$ for the word $w_k\cdots w_1$.
In particular, if $w$ is a symmetric Dyck path, then 
$w = \overline{w}^{\rev} = \overline{w^{\rev}}$.

The following example illustrates the well-known correspondence between antichains in $\Phi_+(B_n)$ and symmetric Dyck paths of semilength $2n$.  A type $B_n$ Dyck path is a lattice path consisting of $2n$ steps that starts at $(0,0)$, never goes below the horizontal axis, and ends at $ (2n,2y) $ for some $0\leq y\leq n$.  

Adjoining to a type $B_n$ Dyck path its reflection across the vertical line $x=2n$ produces a classical Dyck path from $(0,0)$ to $(4n,0)$ that is symmetric. Conversely, the first half of every such symmetric Dyck path 
of semilength $2n$ is a type $B_n$ Dyck path. Thus, type $B_n$ Dyck paths can be naturally identified with symmetric classical Dyck paths of length $4n$, or equivalently, of semilength $2n$.

We denote by
$\SDyck_{2n}\subseteq\Dyck_{2n}$
the set of symmetric Dyck paths of semilength $2n$. For
$w\in\SDyck_{2n}$, let
\[
V^{\half}(w)
=
\bigl\{\,(x,y)\in V(w)\,:\,x\leq 2n\,\bigr\}
\]
be the set of valleys lying in the first half of $w$, including
a possible valley on the axis of symmetry. We define
\[
\Delta_{\SDyck_{2n}}
=
\bigl\{\,
V^{\half}(w)
\,:\,
w\in\SDyck_{2n}
\,\bigr\}.
\]
Under the correspondence described above,
$\Delta_{\SDyck_{2n}}$ is naturally isomorphic to the antichain
complex $\Delta_{B_n}$ and is therefore a simplicial complex. The next
example illustrates this correspondence.

\begin{example} \label{ex:bn}
   Let $\fD = B_n$ with root system 
   $$\Phi(B_n) = \big\{\, \pm (e_i-e_j), \pm(e_i+e_j)~:~ 1 \leq i < j \leq n
   \,\big\}
   \cup \big\{~\pm e_i ~:~1 \leq i \leq n\,\big\}.$$
   We choose the 
   simple roots $e_1 - e_2, e_2 - e_3,
   \ldots, e_{n-1} - e_n, e_n$.
   Then the positive roots
   are $\Phi_+(\fD) = \{ e_i \pm e_j \,:\,
   1 \leq i < j \leq n\} \cup \{ \,e_i~:~1 \leq i \leq n\}$.
   See \ref{fig:b3} for the
   Hasse diagram of the root poset for $B_3$ and
   the corresponding half of the symmetric Dyck path of semilength $6 = 2\cdot 3$ 
   associated with the antichain $\{e_2-e_3\}$.
   
\begin{figure}
   \begin{tikzpicture}
 
\node (a1)  at (-2,0)  {};
 \fill (a1) circle (2.5pt) node[right] {$~~e_1-e_2$};
 
\node (a2)  at ( 0,0)  {};
 \fill (a2) circle (2.5pt) node[right] {$~~e_2-e_3$};
 
\node (a3)  at ( 2,0)  {};
\fill (a3) circle (2.5pt) node[right] {$~~e_3$};

\node (a12) at (-1,1) {};
\fill (a12) circle (2.5pt) node[right] {$~~e_1-e_3$};

\node (a23) at ( 1,1) {};
\fill (a23) circle (2.5pt) node[right] {$~~e_2$};

\node (a123) at (0,2) {};
\fill (a123) circle (2.5pt) node[right] {$~~e_1$};

\node (a2_2a3) at (2,2) {};
\fill (a2_2a3) circle (2.5pt) node[right] {$~~e_2+e_3$};

\node (a12_2a3) at (1,3) {};
\fill (a12_2a3) circle (2.5pt) node[right] {$~~e_1+e_2$};

\node (top) at (2,4) {};
\fill (top) circle (2.5pt) node[right] {$~~e_1+e_3$};

\draw (a1) -- (a12);
\draw (a2) -- (a12);

\draw (a2) -- (a23);
\draw (a3) -- (a23);

\draw (a12) -- (a123);
\draw (a23) -- (a123);

\draw (a23) -- (a2_2a3);

\draw (a123) -- (a12_2a3);
\draw (a2_2a3) -- (a12_2a3);

\draw (a12_2a3) -- (top);

 \node (cp0) at (4,0) {};
 \fill[blue] (cp0) circle (2.5pt) ;
 \node (cp1) at (5,1) {};
 \fill[blue] (cp1) circle (2.5pt) ;
 \node (cp2) at (6,2) {};
 \fill[blue] (cp2) circle (2.5pt) ;
 
\node (ca1)  at (6,0)  {};
 \fill (ca1) circle (2.5pt) ;
 
\node (ca2)  at ( 8,0)  {};
 \fill[red] (ca2) circle (2.5pt) node[right] {$~~e_2-e_3$};
 
\node (ca3)  at ( 10,0)  {};
\fill (ca3) circle (2.5pt) ;

\node (ca12) at (7,1) {};
\fill (ca12) circle (2.5pt) ;

\node (ca23) at ( 9,1) {};
\fill (ca23) circle (2.5pt);

\node (ca123) at (8,2) {};
\fill (ca123) circle (2.5pt);

\node (ca2_2a3) at (10,2) {};
\fill (ca2_2a3) circle (2.5pt);

\node (ca12_2a3) at (9,3) {};
\fill (ca12_2a3) circle (2.5pt);

\node (ctop) at (10,4) {};
\fill (ctop) circle (2.5pt);

\draw (ca1) -- (ca12);
\draw[blue, very thick] (ca2) -- (ca12);
\draw[blue,very thick] (cp0) -- (cp1);
\draw[blue,very thick] (cp1) -- (cp2);
\draw[blue,very thick] (cp2) -- (ca12);
\draw[blue,very thick] (ca2) -- (ca23);
\draw (ca3) -- (ca23);

\draw (ca12) -- (ca123);
\draw (ca23) -- (ca123);

\draw[blue,very thick] (ca23) -- (ca2_2a3);

\draw (ca123) -- (ca12_2a3);
\draw (ca2_2a3) -- (ca12_2a3);

\draw (ca12_2a3) -- (ctop);
\end{tikzpicture}

\caption{Root Poset of $B_3$ and Type $B_3$ Dyck-Path Corresponding to the Antichain $\{e_2-e_3\}$} \label{fig:b3}
\end{figure}
   \end{example}

We next characterize the symmetric Dyck paths corresponding to
the maximal faces of $\Delta_{\SDyck_{2n}}$. We write
\[
\SDyck_{2n}^{\max}
=
\bigl\{\,
w\in\SDyck_{2n}
\,:\,
V^{\half}(w)\text{ is a maximal face of }
\Delta_{\SDyck_{2n}}
\,\bigr\}.
\]

\begin{lemma} \label{lem:SDyck-max}

Let $w\in\SDyck_{2n}$ be a symmetric Dyck path of length $4n$.
Then $V^{\half}(w)$ is a maximal face of
$\Delta_{\SDyck_{2n}}$ if and only if $w$ is $UUDD$-avoiding.
\end{lemma}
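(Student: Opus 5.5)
The plan is to reduce both directions to \ref{lem:max} and its proof, using two facts about symmetric paths. First, the mirror image of a valley $(x,y)$ of $w\in\SDyck_{2n}$ is the valley $(4n-x,y)$. Hence $V(w)=V^{\half}(w)\cup\{(4n-x,y)\,:\,(x,y)\in V^{\half}(w)\}$, so $V(w)$ is determined by $V^{\half}(w)$, and the assignment is monotone under inclusion. Second, the pattern $UUDD$ is invariant under the symmetry $u\mapsto\overline{u}^{\rev}$: its reverse is $DDUU$, whose complement is $UUDD$. Therefore an occurrence of $UUDD$ at positions $j,\dots,j+3$ in $w$ forces an occurrence at the mirrored positions $4n-j-2,\dots,4n-j+1$.

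For the direction ``not $UUDD$-avoiding $\Rightarrow$ not maximal'', I would take an occurrence of $UUDD$ and its mirror occurrence and split into cases by their relative position. The word $UUDD$ has no nontrivial self-overlap, since no proper prefix equals a suffix. Hence two occurrences either coincide or are disjoint.

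If they coincide, the factor sits exactly at positions $2n-1,\dots,2n+2$. I would replace it by $UDUD$, which is again invariant under $\overline{\,\cdot\,}^{\rev}$. This creates exactly one new valley, located at $x=2n$, so it lies in $V^{\half}$.

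If they are disjoint, one occurrence lies entirely in the first half, say at positions $j,\dots,j+3$ with $j+3\le 2n$. I would replace both occurrences by $UDUD$. The result $w'$ is again symmetric. It is a Dyck path by exactly the height computation in the proof of \ref{lem:max}, applied in the case $a=b=1$ to each of the two disjoint replacements. The replacement creates the new valley at $x=j+1\le 2n$ and preserves all other valleys. Hence $V^{\half}(w')=V^{\half}(w)\cup\{(j+1,\cdot)\}$ strictly contains $V^{\half}(w)$, and $V^{\half}(w)$ is not maximal.

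For the converse, I would suppose $V^{\half}(w)\subsetneq V^{\half}(w')$ for some $w'\in\SDyck_{2n}$. By the mirror description of full valley sets, this gives $V(w)\subsetneq V(w')$ in $\Delta_{\Dyck_{2n}}$. Then $V(w)$ is not a maximal face of $\Delta_{\Dyck_{2n}}$. By \ref{lem:max}, applied with $(a,b)=(1,1)$, the path $w$ contains a factor $UUDD$. If one prefers not to pass through maximality in $\Delta_{\Dyck_{2n}}$, the converse argument in the proof of \ref{lem:max} applies verbatim. Pick a valley of $w'$ not in $V(w)$ and the two consecutive points of $V(w)\cup\{\text{endpoints}\}$ surrounding it. Between them, $w$ is $U^uD^d$ while $w'$ has at least one valley, which forces $u,d\ge 2$.

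I expect the main obstacle to be the first direction, namely handling occurrences of $UUDD$ that straddle or touch the axis of symmetry. The no-self-overlap observation is what rules out partial overlaps between an occurrence and its mirror. It also forces the new valley to land at $x\le 2n$ in every case. I would verify this observation carefully, checking overlap shifts $1,2,3$ explicitly, before writing out the rest.
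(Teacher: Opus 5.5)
Your proposal is correct and follows essentially the same route as the paper. The paper also gets the ``$UUDD$-avoiding $\Rightarrow$ maximal'' direction from the type $A$ classification of maximal faces, using that a symmetric path's full valley set is determined by its half-valley set. For the other direction it also uses the no-self-overlap of $UUDD$ to separate a single self-mirrored occurrence at the axis from two disjoint mirrored occurrences, and replaces each occurrence by $UDUD$.
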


\begin{proof}

Suppose first that $w$ is $UUDD$-avoiding. Then by
\ref{lem:max} the $V(w)$ is a maximal face of the
Dyck complex $\Delta_{\Dyck_{2n}}$.

Assume, for contradiction, that $V^{\half}(w)$ is not a maximal face
of $\Delta_{\SDyck_{2n}}$. Then there exists
$z\in\SDyck_{2n}$ such that
$V^{\half}(w)\subsetneq V^{\half}(z).$
Since both $w$ and $z$ are symmetric, every valley lying strictly
in the first half has a corresponding reflected valley in the
second half, while the valley on the axis of symmetry, if it exists, 
is fixed by the reflection. Therefore,
$V(w)\subsetneq V(z).$
This contradicts the maximality of $V(w)$ in
$\Delta_{\Dyck_{2n}}$. Hence $V^{\half}(w)$ is a maximal face of
$\Delta_{\SDyck_{2n}}$.

Conversely, suppose that $w$ is not $UUDD$-avoiding. We show
that $V^{\half}(w)$ is not maximal by modifying $w$ preserving the
property of being symmetric. 

First, we consider the case when
$w = w_1UUDDw_2UUDDw_3$, where
$w_1 = \overline{w_3}^\rev$ and $w_2 = \overline{w_2}^{\rev}$.
Then $z = w_1UDUDw_2UDUDw_3$ is a Dyck path since the height in the
first factor $UDUD$ never goes below the height at the end of $w_1$ 
and the height of the second factor $UDUD$ never goes below the height at 
the end of $w_1UDUDw_2$. The symmetry of $z$ is obvious and
$z \in \SDyck_{2n}$. 

Moreover, each factor $UDUD$ creates one new valley and does not
destroy any existing valley. The two new valleys are reflections
of one another, and exactly one of them lies in the first half.
Consequently,
$V^{\half}(w)\subsetneq V^{\half}(z).$

Two subwords $UUDD$ cannot overlap, since none of the proper prefixes of $UUDD$ is a suffix. Hence the only case left to consider is
when $w = w_1UUDDw_2$ with $w_1 = \overline{w_2}^{\rev}$. 
Define $z = w_1UDUDw_2$.

Clearly, $z$ is again symmetric. As above, the
replacement preserves the Dyck path condition and creates a new
valley on the axis of symmetry. Therefore,
$V^{\half}(w)\subsetneq V^{\half}(z).$

In either case, $V^{\half}(w)$ is properly contained in another
face of $\Delta_{\SDyck_{2n}}$, and hence it is not a maximal face. This
completes the proof.
\end{proof}

Let $\prec$ be the linear order on $\Dyck_{2n}^{\max}$ introduced before the
proof of \ref{thm:shellable} in \ref{sec:a}. Since by \ref{lem:SDyck-max} we have
$\SDyck_{2n}^{\max} \subseteq \Dyck_{2n}^{\max}$ the order $\prec$ defines
also a linear order on $\SDyck_{2n}^ {\max}$. Indeed, since the 
valleys of $w \in \SDyck_{2n}$ are determined by $V^{\half}(w)$, 
we can decide $v \prec w$ by comparing $V^{\half}(v)$ and $V^ {\half}(w)$.

\begin{theorem}\label{thm:SDyck-shellable} 
The simplicial complex $\Delta_{\SDyck_{2n}}$ is shellable. \end{theorem}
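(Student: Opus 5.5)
We use the restriction of $\prec$ to $\SDyck_{2n}^{\max}$ as the shelling order. Comparing $V(v)$ and $V(w)$ lexicographically is the same as comparing $V^{\half}(v)$ and $V^{\half}(w)$, since the second half of each valley list is determined by the first half.

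Let $v\prec w$ be in $\SDyck_{2n}^{\max}$ and let $p=(x_k',y_k')$ be the first valley of $w$ not in $v$. Because $v\prec w$ and both paths are symmetric, the valleys of $w$ before $p$ are shared with $v$, and the first differing valley lies in the first half, so $x_k'\le 2n$. We must find $z\in\SDyck_{2n}^{\max}$ with $z\prec w$ and
\[
V^{\half}(v)\cap V^{\half}(w)\subseteq V^{\half}(z)\cap V^{\half}(w)=V^{\half}(w)\setminus\{p\}.
\]
Write $w=w_1DUw_2$ with $p$ the valley after $w_1$. We apply the same local moves as in the proof of \ref{thm:shellable}:
\begin{itemize}
\item if $w_1$ ends in $D$, replace $DDU$ by $DUD$;
\item otherwise $w_1$ ends in $UU$ (the Claim from that proof carries over verbatim), and we replace $UUDU\cdots$ by $UDUU\cdots$, $UDUUDU\cdots$ or $UDUDUD\cdots$ according to whether $w_2$ starts with $U$, $DU$ or $DD$.
\end{itemize}
We perform the move in the first half and the mirrored move (apply $\overline{\,\cdot\,}^{\rev}$) in the second half. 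The result $z$ is symmetric by construction. Each Dyck inequality is checked exactly as before: in the first half it is the same local height check, and in the mirrored region it follows by symmetry, since heights of a symmetric path are mirror images. $UUDD$-avoidance is local too, and by \ref{lem:SDyck-max} it gives maximality. The new valley lies before $p$, so $z\prec w$. Every valley of $w$ other than $p$ and its mirror image is kept, which gives the required equality for the halves.

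\textbf{Main obstacle.} The hard part is when the local window of the move reaches or crosses the axis $x=2n$, so that the move and its mirror overlap or interact.

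This happens when $p$ lies on the axis, or when the window of $w_1$'s last letters together with $w_2$'s first letters straddles position $2n$. If $p$ is on the axis, then $w=w_1DUw_1'$ with $w_1'=\overline{w_1}^{\rev}$. The case where $w_1$ ends with $D$ would need $DDUU$ turned into $DUDU$ to keep symmetry: this removes the axis valley $p$ and creates the valleys at $x=2n\mp 1$, of which exactly one lies in the first half. That is the symmetric analogue of the $DDU\to DUD$ move, and we must check it yields $z\prec w$ and is $UUDD$-free. If $w_1$ ends in $UU$, then symmetry forces $w=\cdots UUDUDD\cdots$, and we will try $\cdots UDUDUD\cdots$. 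Heights stay at least the height before the window, but this creates a valley on the axis again, so instead we need a move such as $UUDUDD\to UDUUDD$ followed by a fix; this may force $UUDD$ and fail.

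So the step I would work out first, by enumerating small local patterns around the axis, is a symmetric replacement for a valley on or adjacent to the axis that destroys exactly $p$ (and not any other first-half valley), creates only earlier valleys, and stays $UUDD$-avoiding. If no such replacement exists in some configuration, I would argue that in that configuration no $v\prec w$ can have $p$ as its first differing valley.
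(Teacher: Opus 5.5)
\textbf{Verdict: the proposal has a genuine gap.} Your treatment of the generic situation is correct. There the window of the type~$A$ move and its mirror image stay away from the axis $x=2n$, and this is exactly the paper's approach: transport the moves from the proof of \ref{thm:shellable} and mirror them.

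The problem is that you stop where the type~$B$ argument needs an actual check. The configurations at or next to the axis are left open, and your remarks about them are partly wrong. Take $p=(2n,y)$ with $w_1$ ending in $UU$. The forced block $UUDUDD$ sits on positions $2n-2,\dots,2n+3$, and the type~$A$ replacement $UDUDUD$ has a $U$ at position $2n$. Its new valleys are at $2n\pm1$, not on the axis. It is symmetric, stays weakly above the height at the start of the block, and creates no $UUDD$, so this case needs no modification. Your fallback would not help either, because configurations next to the axis do occur as first differing valleys. For $n=2$ one has $v=UDUDUDUD\prec w=UUDUDUDD$, with first differing valley $(3,1)$.

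What is missing is the short bookkeeping near the axis.
\begin{itemize}
\item If $w_1$ ends in $D$, the swap $DU\to UD$ at positions $x,x+1$ and its mirror either coincide ($x=2n$, giving $DDUU\to DUDU$) or are disjoint. Nothing goes wrong.
\item If $w_1=w_1''UU$, the case $w_2=U\cdots$ is impossible for $x\ge 2n-2$. For $x\ge 2n-1$ symmetry excludes it, and for $x=2n-2$ it would produce $UUDD$ across the axis.
\item Symmetry forces $w_2$ to begin with $DD$ at $x=2n$ and with $DU$ at $x=2n-1$. For $x\le 2n-2$ the two mirrored moves do not interact.
\end{itemize}
The one configuration where naive mirroring fails is $x=2n-1$, that is, $w=w_1''\,UUDUDUDD\,\overline{w_1''}^{\rev}$ (the $n=2$ example above has this form). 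Mirroring the swap gives $UDUUDDUD$, which contains $UUDD$ straddling the axis, so $z\notin\SDyck_{2n}^{\max}$. The paper's own ``easily mirrored'' in the subcase $w_2=DU\cdots$ glosses over this too.

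One fix is $z=w_1''\,UDUDUDUD\,\overline{w_1''}^{\rev}$, the replacement used in the proof of \ref{thm:typeB-h}. A cleaner fix removes all $UUDD$-checks at once:
\begin{itemize}
\item Let $z$ be any symmetric Dyck path with $V^{\half}(w)\setminus\{p\}\subseteq V^{\half}(z)$, with $p\notin V^{\half}(z)$, and with a new valley to the left of $p$. Extend $V^{\half}(z)$ to a facet $V^{\half}(z')$.
\item Maximality of $V^{\half}(w)$ forces $p\notin V^{\half}(z')$.
\item $z'\prec w$, because $z'$ contains every valley of $w$ left of $p$ together with a valley of its own left of $p$.
\end{itemize}
That is all the shelling condition requires.
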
 
\begin{proof} 
We proceed parallel in with the proof of \ref{thm:shellable}.
  Let $v \prec w$ be Dyck words corresponding to maximal faces
    \[V^{\half}(v) = \big\{\,(x_1,y_1),\dots,(x_\ell,y_\ell)\,\big\} \text{ and } V^ {\half}(w) = \big\{\,(x_1',y_1'),\dots,(x_{\ell'}',y_{\ell'}')\,\big\} \] of $\Delta_{\SDyck_n}$. As usual we assume
    $x_1 < \cdots < x_\ell$ and $x_1' < \cdots < x_{\ell'}'$.
    Let $(x_k,y_k) \in V^{\half}(w)$ be the lexicographically least valley in 
    $V^ {\half}(w) \setminus V^{\half}(v)$. 
    
Let $w = w_1DUw_2$ where $|w_1| = x_k-1$. As in the proof of \ref{thm:shellable} we consider the case when $w_1$ ends with $D$ and the case when it ends with $U$ separately.

\medskip

    \noindent {\sf Case:} $w_1 = w_1'D$

    In this case the $z \prec w$ constructed in the proof of \ref{thm:shellable} is $z = w_1'DUDw_2$. If $|w_1D| = x_k = 2n$ then this
    $z$ is already symmetric and if $|w_1D| <2n$ then the change can be
    easily mirrored in $w_2$ to make $z$ symmetric.

\medskip

    \noindent {\sf Case:} $w_1 = w_1'U$

    The same arguments as in the proof of \ref{thm:shellable} 
    show that $w_1$ does not factor as $w_1 = w_1'' DU$ or equivalently there is no valley immediatly preceding $(x_k,y_k)$. 
    Thus we have $w_1 = w_1''UU$. 
    
    We continue to follow the proof of \ref{thm:shellable} and distinguish
    subcases.

    \medskip

    $\rightarrow$ $w_2 = Uw_2'$

    In this case $|w_1DUU| = x_k+2 \leq 2n$ and 
    $z = w_1''UDUUUw_2'$ from the proof of \ref{thm:shellable} 
    can be easily made symmetric by mirroring the
    changes in $w_2'$ to make $z$ a symmetric Dyck path. The 
    arguments from \ref{thm:shellable} then complete the case.

    \medskip

    $\rightarrow$ $w_2 = DUw_2''$

    In that case $w = w_1''UUDUDUw_2'$ and the proof of 
    \ref{thm:shellable} considers the path $z = w_1''UDUUDUw_2''$.
    Since $w$ cannot be $w_1''UUD\overline{w_1''UUD}^{\rev}$
    it follows that $|w_1''UUD| < 2n$ and this change can be easily mirrored in $w_2''$
    to make $z$ a symmetric Dyck word. 

    \medskip

    $\rightarrow$ $w_2 = DDw_2''$
    
    In that case $w = w_1''UUDUDDw_2'$ and the proof of 
    \ref{thm:shellable} considers the path $z=w_1''UDUDUDw_2''$. 
    As in the preceding subcase it follows that  $|w_1''UUD| \leq 2n$. This and the structure of the replacement 
    imply that $z$ can be easily transformed into a symmetric Dyck path by 
    modifying $w_2''$. 

In all cases, $z$ is a $UUDD$-avoiding symmetric Dyck path
and $z \prec w$. 
Direct inspection shows that $(x_k,y_k) \not\in V^ {\half}(z)$ 
and that all
other valleys from $w$ are inherited by $z$. 
Thus 
\[
V^{\half}(z)\cap V^{\half}(w)
 =
V^{\half}(w)\setminus\{ (x_k,y_k)\}.
\]

It follows that $\prec$ is a shelling order and $\Delta_{\SDyck_{2n}}$ is
a shellable simplicial complex.
\end{proof}

Next we aim to find combinatorial interpretations for the $f$- and
$h$-triangle of $\Delta_{\SDyck_{2n}}$. We start by defining a
type $B$-analog of the function $\mu$. 

For a point $(x,y)$ reached by a path $w \in \{D,U\}^ *$ the number of 
letters $D$ and the number of letters $U$ in $w$ can be computed as 
\[
d\big(\,(x,y)\,\big)=\frac{x-y}{2}
\quad \text{ and } \quad
u\big(\,(x,y)\,\big)=\frac{x+y}{2}.
\]

Let $w\in\SDyck_{2n}$, and write
\[
V^{\half}(w)=\big\{\,(x_1,y_1) , \ldots , (x_\ell,y_\ell)\,\big\}
\]
where $x_1<\cdots<x_\ell$. For $k = 1,\ldots, \ell$ set
\[
d_k=d\big(\,(x_k,y_k)\,\big)= \frac{x_k-y_k}{2} 
\quad \text{ and } \quad
u_k=u\big(\,(x_k,y_k)\,\big) = \frac{x_k+y_k}{2}.
\]
If $\ell \geq1$ or equivalently $V^ {\half}(w) \neq \varnothing$  
define
\[
\mu_B(w)
=
d_1+
\sum_{k=1}^{\ell-1}
\min\{d_{k+1}-d_k,u_{r+k}-u_k\}
+
\left\lfloor
\frac{2n-d_\ell-u_\ell}{2}
\right\rfloor.
\tag{5.1}
\]
If $V^{\half}(w)=\varnothing$, we set
$\mu_B(w)=n.$

In terms of the coordinates $(x_k,y_k)$,
\[
\mu_B(w)
=
\frac{x_1-y_1}{2}
+
\sum_{k=1}^{\ell-1}
\frac{x_{k+1}-x_k-|y_{k+1}-y_k|}{2}
+
\left\lfloor\frac{2n-x_\ell}{2}\right\rfloor.
\tag{5.2}
\]

Now we are in a position to describe the $f$-triangle of $\Delta_{\SDyck_{2n}}$.

\begin{theorem}\label{thm:typeB-f}
Let
$(f^B_{ij})_{0\leq j\leq i\leq n}$
be the $f$-triangle of $\Delta_{\SDyck_{2n}}$. Then we have 
\[
f^B_{ij}
=
\Big|\,\left\{
w\in\SDyck_{4n}
\,:\,
|V^{\half}(w)|=j
\text{ and }
\mu_B(w)=i
\right\}\,\Big|.
\]
\end{theorem}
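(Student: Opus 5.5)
We follow the proof of \ref{thm:typeA-f}. Fix $w\in\SDyck_{2n}$ with $V^{\half}(w)=\{p_1,\ldots,p_\ell\}$, $p_k=(x_k,y_k)$, $x_1<\cdots<x_\ell$. We will show $\deg(V^{\half}(w))=\mu_B(w)$ in $\Delta_{\SDyck_{2n}}$; the stated formula for $f^B_{ij}$ then follows from the definition of the $f$-triangle. (The statement writes $\SDyck_{4n}$ and $u_{r+k}$; we read these as $\SDyck_{2n}$ and $u_{k+1}$, as in (5.2).) By \ref{lem:SDyck-max}, a symmetric $z$ with $V^{\half}(w)\subseteq V^{\half}(z)$ and $V^{\half}(z)$ maximal is exactly a symmetric $UUDD$-avoiding Dyck path whose valley set contains $V(w)$. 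Since the valleys of a symmetric path are determined by its first half, it suffices to analyse the first half of $z$: a type $B_n$ path from $(0,0)$ to the axis $x=2n$ passing through $p_1,\ldots,p_\ell$.

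\textbf{Upper bound.} Cut the first half of $z$ into three kinds of segments.
\begin{enumerate}
\item[(a)] The segment from $(0,0)$ to $p_1$. It has $u_1$ up-steps and $d_1$ down-steps, and $d_1\le u_1$ because $y_1\ge 0$. If it has $s$ maximal blocks $U^\alpha D^\beta$, then $s\le d_1$, so it contributes at most $d_1-1$ new valleys, plus the valley $p_1$ itself. Its total contribution is therefore at most $d_1$, which is the first term of (5.1).
\item[(b)] The segment from $p_k$ to $p_{k+1}$. Exactly as in \ref{thm:typeA-f}, it contributes at most $\min\{d_{k+1}-d_k,\,u_{k+1}-u_k\}-1$ new valleys, plus the valley $p_{k+1}$. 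Its total contribution is at most $\min\{d_{k+1}-d_k,\,u_{k+1}-u_k\}$.
\item[(c)] The tail from $p_\ell$ to the axis. It has $2n-x_\ell$ steps and may end at any height, and a valley exactly on the axis counts. A valley at position $x$ needs a $D$ at step $x$ and a $U$ at step $x+1$. Each new valley in the tail (a valley on the axis included) therefore consumes at least two steps after $p_\ell$ within the first half, which gives at most $\lfloor (2n-x_\ell)/2\rfloor$ valleys.
\end{enumerate}
The step we expect to be the main obstacle is the bookkeeping for a valley on the axis. Such a valley uses step $2n+1$, which is a $U$ lying in the second half, so we must check carefully that the pattern $(DU)^m$ ending exactly at $x=2n$ is admissible and yields $m$ valleys. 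The empty case $\ell=0$ is handled the same way: the whole first half yields at most $\lfloor 2n/2\rfloor=n$ valleys, matching the convention $\mu_B(w)=n$. Summing (a), (b) and (c) gives $|V^{\half}(z)|\le\mu_B(w)$.

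\textbf{Lower bound.} We construct a $z$ attaining each bound.
\begin{enumerate}
\item[(a)] On the initial segment use $Q(u_1,d_1)=U^{u_1-d_1+1}D(UD)^{d_1-1}$, ending with the final $D$ before $p_1$.
\item[(b)] Between consecutive $p_k$ use $Q(u_{k+1}-u_k,\,d_{k+1}-d_k)$, exactly as in \ref{thm:typeA-f}.
\item[(c)] On the tail use $(UD)^m$ followed, if $2n-x_\ell$ is odd, by one final $U$, where $m=\lfloor(2n-x_\ell)/2\rfloor$. When $2n-x_\ell$ is even, the last valley lands on the axis. The tail ends at height $y_\ell$ or $y_\ell+1$.
\end{enumerate}
Then take the symmetric completion of the resulting first half.

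It remains to check that the completed path is a $UUDD$-avoiding symmetric Dyck path; $UUDD$-avoidance is what makes $V^{\half}(z)$ maximal, via \ref{lem:SDyck-max}.
\begin{itemize}
\item Heights stay nonnegative by the same argument as in \ref{thm:typeA-f}.
\item The junctions between consecutive pieces are of the form $DU$, so no $UUDD$ arises there.
\item Near the axis, a $UUDD$ could only arise from $UU$ meeting the mirrored $DD$. This needs checking in the odd case, where the first half ends $\cdots UDU$ and its mirror begins $DUD\cdots$, so no $UUDD$ occurs. It also needs checking when $\ell=0$ or the tail is empty; there we must ensure the first half does not end in $UU$, adjusting the last block of the preceding $Q$ if necessary.
\end{itemize}
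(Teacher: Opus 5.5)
Your argument is correct and is essentially the paper's proof. The paper encodes each first-half valley as $(d,u)$ in the poset $P_n=\{(d,u): 1\le d\le u,\ d+u\le 2n\}$ with the strict product order, so that faces of $\Delta_{\SDyck_{2n}}$ become chains; it proves exactly your three bounds ($d_1$, the $\min$ terms, and $\lfloor(2n-x_\ell)/2\rfloor$) and attains them with the same constructions, since your $Q(u_1,d_1)$ and $(UD)^m$ tail are precisely its chains $(j,u_1-d_1+j)$ and $(d_\ell+j,u_\ell+j)$. The chain formulation handles the axis bookkeeping automatically, and your closing $UUDD$-avoidance check is unnecessary: any face of size $\mu_B(w)$ containing $V^{\half}(w)$ already gives $\deg(V^{\half}(w))\ge\mu_B(w)$.
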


\begin{proof}
Consider the set
\[
P_n=\left\{(x,y)\in\mathbb{N}^2\,:\,1\leq x\leq y,\quad x+y\leq2n
\right\},
\]
equipped with the strict product order
\[
(x,y)\sqsubset (x',y')
\quad\Longleftrightarrow\quad
d<d'
\ \text{and}\
u<u'.
\]

We identify a valley $(x,y)$ in the first half of a symmetric Dyck path $w \in \SDyck_{2n}$
with 
\[
\big(\,d(\,(x,y)\,),u(\,(x,y)\,)\,\big)
=
\left(\frac{x-y}{2},\frac{x+y}{2}\right)
\in P_n.
\]
As usual for $w \in \SDyck_{2n}$ we have
$V^{\half}(w) = \big\{\, (x_1,y_1),\ldots,(x_\ell,y_\ell)\,\big\}$ with $x_1 < \cdots < x_\ell$.
For $1 \leq k < k' \leq \ell$ and valleys $(x_k,y_k)$ and $(x_{k'},y_{k'})$ both
the number of $D$-steps and the number of $U$-steps preceding the respective valley increase.
Consequently,
$(d_k,u_k)\sqsubset (d_{k'},u_{k'}).$
It follows that the set $V^{\half}(w)$ can be identified
with a chain $C_w$ in $P_n$. Conversely, every chain $C$ in $P_n$
determines $w \in {\SDyck_{2n}}$ with $C = C_w$. Under this
identification, the maximal faces correspond to maximal chains.

We now calculate the maximum size of a chain $C'$ in $P_n$ containing
the fixed chain
$C_w=\big\{\,(d_1,u_1) \sqsubset \cdots \sqsubset (d_\ell,u_\ell)\,\big\}.$

Since the first coordinates of a chain in $P_n$ are strictly positive and strictly increasing, the chain $C'$ can contain at most $d_1$ elements less than or equal to 
$(d_1,u_1)$ in the order on $P_n$. 

Let $1 \leq k \leq \ell-1$. Then the number of elements $(x,y) \in C' \setminus C$ with $(d_k,u_k)  \sqsubset (x,y) \sqsubset (d_{k+1},u_{k+1})$ is 
 at most
$\min\{d_{k+1}-d_k,u_{k+1}-u_k\}.$
Indeed, both coordinates must increase by at least one at every
step.

Finally, suppose that $C'$ contains $s$ elements strictly
larger than $(d_\ell,u_\ell)$ in the order on $P_n$. 
Let $(x,y)$ be the largest among those. 
Then $x \geq d_\ell+s$ and $y \geq u_\ell+s$. 
Since every element $(x,y)$ of $P_n$ satisfies $x+y\leq2n$, we obtain
$d_j+u_j+2s\leq2n.$
Therefore,
\[
s\leq
\left\lfloor
\frac{2n-d_\ell-u_\ell}{2}
\right\rfloor.
\]
Combining these bounds and comparing with the formula for $\mu_B(w)$ gives
$|C'|\leq\mu_B(w)$ and hence $\deg(V^ {\half}(w)) \leq \mu_B(w)$,
for every chain $C'$ containing $C$.

It remains to show that this bound is attained. We add to $C$ the
elements of the chain
\[
(1,u_1-d_1+1)
\sqsubset
(2,u_1-d_1+2)
\sqsubset\cdots\sqsubset
(d_1,u_1).
\]
For each $1\leq k<\ell$, let
\[
m_k=
\min\{d_{k+1}-d_k,u_{k+1}-u_k\}
\]
and add to $C$ the elements
$(d_k+j,u_k+j),$
$1\leq j<m_k,$
between $(d_k,u_k)$ and $(d_{k+1},u_{k+1})$. 
Finally, add to $C$ the elements
\[
(d_\ell+j,u_\ell+j),
\qquad
1\leq s\leq
\left\lfloor
\frac{2n-d_\ell-u_\ell}{2}
\right\rfloor.
\]
All elements added to $C$ belong to $P_n$ and together they form a
chain $C'$ containing $C$. By construction the cardinality 
of $C'$ is $\mu_B(w)$. Hence $\deg(V^ {\half}(w)) \geq \mu_B(w)$. 

Combining the two inequalities we obtain
$\deg(V^ {\half}(w)) = \mu_B(w)$.

The assertion now follows from the definition of
the $f$-triangle.
\end{proof}

Next we determine a description of the $h$-triangle of
$\Delta_{\SDyck_{2n}}$.
Again translating the definition of the restriction of a maximal face from \ref{prop:shelh} to the given setting we obtain
\[
R(w)
=
\left\{
p\in V^{\half}(w)
\,:\,
\genfrac{}{}{0pt}{}{V^{\half}(w)\setminus\{p\}
\subseteq V^{\half}(w')}{
\text{ for some }
w'\in\SDyck_{2n}^{\max}
\text{ with } w'\prec w}
\right\}.
\]
By \ref{prop:shelh}, we have
\[
h^B_{i\,j}
=
\Big|\,\left\{\,
w\in\SDyck_{2n}^{\max}
\,:\,
|V^{\half}(w)|=i \text{ and } |R(w)|=j\,
\right\}\,\Big|.
\]

As in the classical case, we regard $(0,0)$ as a virtual valley for the purpose of 
defining successive valleys. Note that it is not a vertex of
$\Delta_{\SDyck_{2n}}$. Define
\[
\suc_B(w)
=
\Big|\,\left\{
(x,y)\in V^{\half}(w)
\,:\,
(x-2,y)\in V^{\half}(w)\cup\{(0,0)\}
\right\}\,\Big|.
\]
Thus $\suc_B(w)$ is the number of valleys in the first half of $w$ which are
immediately preceded by another valley at the same height, with
$(0,0)$ included as a virtual predecessor.

\begin{theorem}\label{thm:typeB-h}
For $0\leq j\leq i\leq n$, we have
\[
h^B_{i,j}
=
\Big|\,\left\{
w\in\SDyck_{2n}^{\max}
\,:\,
|V^{\half}(w)|=i \text{ and }
\suc_B(w)=i-j
\right\}\,\Big|.
\]
\end{theorem}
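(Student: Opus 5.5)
We will argue exactly parallel to the proof of \ref{thm:typeA-h}, using \ref{prop:shelh} for the shelling order $\prec$ restricted to $\SDyck_{2n}^{\max}$ from \ref{thm:SDyck-shellable}. Fix $w\in\SDyck_{2n}^{\max}$ with $|V^{\half}(w)|=i$. We partition $V^{\half}(w)$ into maximal blocks of consecutive valleys $(x,y),(x+2,y),\dots$, with the virtual valley $(0,0)$ adjoined to the block containing $(2,0)$ if that valley is present. As in type $A$, a count of successive valleys shows that the number of blocks not containing $(2,0)$ equals $i-\suc_B(w)$. The theorem therefore reduces to the claim that $R(w)$ is precisely the set of first valleys of the blocks not containing $(2,0)$.

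\emph{Non-first valleys are not in $R(w)$.} Let $q=(x+2,y)$ be preceded by $p=(x,y)\in V^{\half}(w)$, and suppose $w'\prec w$ with $V^{\half}(w)\setminus\{q\}\subseteq V^{\half}(w')$.
\begin{itemize}
\item The first point where the ordered valley sequences of $w$ and $w'$ differ must come before $q$.
\item It cannot lie between $p$ and $q$, because the factor $UD$ between them admits no further valley.
\item It cannot lie between two earlier preserved valleys, nor before the first valley. Otherwise, splicing the corresponding segment of $w'$ into $w$, and mirroring it in the second half, would give a symmetric Dyck path whose half-valley set strictly contains $V^{\half}(w)$. This contradicts maximality.
\end{itemize}
Hence $w\prec w'$, a contradiction. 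The same argument handles $(2,0)$, which is the earliest possible valley. The splicing step needs care because the splice must keep the path symmetric. Since both $w$ and $w'$ are symmetric, we splice the first-half segment and its mirror image simultaneously. The only delicate situation is a segment straddling the axis $x=2n$; there both endpoints are mirror images, so the spliced segment is itself symmetric.

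\emph{First valleys of the remaining blocks are in $R(w)$.} Let $p=(x,y)$ be such a valley, with $w=w_1DUw_2$. We reuse the constructions from the proof of \ref{thm:typeA-h}:
\begin{itemize}
\item if $w_1$ ends in $D$, replace $DDU$ by $DUD$ (Case 1), or $DDUDU$ by $DUDDU$ (Case 2);
\item if $w_1=w_1''UU$, replace by $w_1''UDUU\cdots$, or by $w_1''UDUDUD\cdots$ when $w_2$ starts with $DD$, or by $w_1''UDUUDU\cdots$ in Case 2.
\end{itemize}
In each case we then mirror the modification in the second half, exactly as in the proof of \ref{thm:SDyck-shellable}.

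\emph{Main obstacle.} We expect the main difficulty to be the bookkeeping near the axis of symmetry. A local replacement may touch positions beyond $2n$, or the valley $p$ may itself lie on the axis, where $x=2n$. We must then check three things: the modified word is still a symmetric Dyck path; it is $UUDD$-avoiding, so that by \ref{lem:SDyck-max} it lies in $\SDyck_{2n}^{\max}$; and exactly $p$ is removed from $V^{\half}$, with an earlier valley created so that $w'\prec w$.

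We would treat the axis cases separately as follows.
\begin{itemize}
\item If $x=2n$, the replacement is applied to $w_1$ and mirrored to the right. For example, $w=w_1'DDU\,\overline{w_1'DDU}^{\rev}$ cannot occur since $DU$ sits on the axis, and the $UU$-ending case $w_1''UUD\,U\cdots$ becomes $w_1''UDUUD\cdots$ suitably symmetrized.
\item If $x<2n$, the replacements either fit in the first half or overlap the axis by a letter. Here we argue, as in \ref{thm:SDyck-shellable}, that the offending configurations would force a $UUDD$ at the centre, contradicting maximality.
\end{itemize}
Once the claim is established, \ref{prop:shelh} gives $|R(w)|=i-\suc_B(w)$, which is the statement.
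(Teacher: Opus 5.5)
Your overall structure matches the paper's proof: the block decomposition with the virtual valley $(0,0)$, the splicing argument showing that non-initial valleys are not in $R(w)$, and reusing the type $A$ replacements, mirrored, for first valleys. The gap is in how you handle the axis. You assert that for $x<2n$, any configuration in which the mirrored replacement misbehaves would already force a $UUDD$ at the centre of $w$. That is false.

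The criterion to check is this. If $u$ is the modified first half, then $u\,\overline{u}^{\rev}\in\SDyck_{2n}^{\max}$ if and only if:
\begin{itemize}
\item $u$ stays weakly above height $0$;
\item $u$ contains no $UUDD$;
\item $u$ does not end in $UU$.
\end{itemize}
The last condition is exactly what rules out a $UUDD$ straddling the axis.

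Now take $x=2n-1$ with $w_1=w_1''UU$. Symmetry forces $w=w_1''UUDUDUDD\,\overline{w_1''}^{\rev}$. This is a legitimate element of $\SDyck_{2n}^{\max}$: for $n=2$ it is $UUDUDUDD$, with $V^{\half}(w)=\{(3,1)\}$ and $p=(3,1)$. Here $(x+2,y)=(2n+1,y)\notin V^{\half}(w)$ and $w_2$ begins with $DU$. So both replacements on your list that apply (the ``$w_2$ not starting with $DD$'' one and the Case 2 one) give the first half $u=w_1''UDUU$. Mirroring then yields $w_1''UDUU\,DDUD\,\overline{w_1''}^{\rev}$, which contains $UUDD$ and is not in $\SDyck_{2n}^{\max}$. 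For $n=2$ this is $UDUUDDUD$, whose half-valley set $\{(2,0)\}$ lies in $\{(2,0),(4,0)\}$. Without mirroring, the word is not symmetric at all. So your argument does not establish $p\in R(w)$ in this configuration, and the offending $UUDD$ lives in $w'$, not in $w$.

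The fix is a different replacement around the axis, $w'=w_1''UDUDUDUD\,\overline{w_1''}^{\rev}$, which is what the paper uses in its subcase $x=2n-1$. This $w'$ is a symmetric $UUDD$-avoiding Dyck path. It satisfies $V^{\half}(w')=(V^{\half}(w)\setminus\{p\})\cup\{(2n-2,y-1),(2n,y-1)\}$, and hence $w'\prec w$.

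More generally, you need to check the near-axis positions $x\in\{2n-3,2n-2,2n-1,2n\}$ individually against the criterion above. In the other cases, either the mirrored type $A$ replacement works, or symmetry genuinely forces a central $UUDD$ in $w$. An example of the latter is $x=2n-2$ with $w_{2n}=U$, which gives $w_{2n-1}\cdots w_{2n+2}=UUDD$. However, $x=2n-1$ preceded by $UU$ is a real case and needs its own construction.
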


\begin{proof}
As in the proof of
\ref{thm:typeA-h}, partition
$V^{\half}(w)\cup\{(0,0)\}$
into maximal blocks of successive valleys. We claim that $R(w)$
consists precisely of the first valley of each block not containing
the virtual valley $(0,0)$.

The argument showing that a non-initial valley of a block
does not belong to the restriction set is identical to the  type
$A$ argument and actually follows from it. 

Now let $p = (x,y) \in V^ {\half}(w) $ be the first valley of a block corresponding to the $DU$ factor in the factorization 
$w = w_1DUw_2$. 
We now distinguish the same cases as in the proof of \ref{thm:typeA-h}.

\medskip

\noindent {\sf Case 1:} $(x+2,y) \not \in V^{\half}(w)$.
If $w_1 = w_1'D$ then by a symmetric modification of $w_2$ the 
path corresponding to $w_1'DUDw_2$ can be made into a symmetric $w' \prec w$ with $V^{\half}(w)\setminus\{p\}
\subseteq V^{\half}(w')$. It follows that $p\in R(w)$. 
In case $x = 2n$, the symmetry prevents the modification of $w_2$ to start with $DD$ and hence we always have $w' \in \SDyck^{\max}_{2n}$. 

Next consider the case $w_1 = w_1'U$. Since $(x,y)$ is the first valley in its block, we have $w_1 = w_1''UU$.
If $w_2$ does not start with $DD$, then 
by a suitable replacement of $w_2$ in 
$w_1''UDUUw_2 \in \Dyck_{2n}^{\max}$ we obtain $w'
\in \SDyck_{2n}^{\max}$ with $w' \prec w$ and $V^{\half}(w)\setminus\{p\}
\subseteq V^{\half}(w')$. It follows that $p\in R(w)$.

Consider the case $w_2 = DDw_2'$, then by symmetry of $w$, we must have $x < 2n-3$. Then by a suitable replacement of $w_2''$ in $w_1''UDUDUDw_2'' \in \Dyck_{2n}^{\max}$ 
we obtain a symmetric $w' \in \SDyck_{2n}^{\max}$
with $w' \prec w$ and $V^{\half}(w)\setminus\{p\}
\subseteq V^{\half}(w')$. It follows that $p\in R(w)$.

\medskip

\noindent {\sf Case 2:} $(x+2,y) \in V^{\half}(w)$.
Equivalently we have $w_2 = DUw_2'$.
Again we start with the case $w_1 = w_1'D$. 
In that case, by checking possible symmetry axes of $w_1'DDUDUw_2'$, we find that $x = 2n$ is not possible. 
Thus $x < 2n$. 
If $x \neq 2n-1$, consider
$w_1'DUDDUw_2' \in \Dyck_{2n}^{\max}$. Then by a suitable replacement of $w_2'$ this can be made into
$w' \in \SDyck_{2n}^ {\max}$. 
In the case $x= 2n-1$, the symmetry of $w$ implies $w_2 = DUUw_2''$.
Then by a replacement of $w_2''$, the Dyck path 
$w_1'DUDUDUw_2' \in \Dyck_{2n}^{\max}$ can be made into
$w' \in \SDyck_{2n}^{\max}$ which then satisfies $w' \prec w$ and
$V^{\half}(w)\setminus\{p\}
\subseteq V^{\half}(w')$. And then $p\in R(w)$.

Next consider the case $w_1 = w_1'U$. Using that $p = (x,y)$ is the first valley in its block, which in particular implies $x \neq 2$, it follows that $w_1 = w_1''UU$ 
and $ w= w_1''UUDUDUw_2'$. Checking possible axes of symmetry we deduce that $x < 2n$. If $x < 2n-1$ then by a suitable replacement of $w_2'$ the word 
$w_1''UDUUDUw_2' \in \Dyck_{2n}^{\max}$ can be made into a
symmetric $w' \in \SDyck_{2n}^{\max}$ such that 
$w' \prec w$ and $V^{\half}(w)\setminus\{p\}
\subseteq V^{\half}(w')$. We obtain that $p\in R(w)$.
Consider the case $x = 2n-1$. Then by symmetry $w_2' = DDw_2''$.
In that case, by a suitable replacement of $w_2''$, the Dyck path $w' = w_1''UDUDUDUDw_2''$ is symmetric and easily seen
to be maximal. Thus $w' \in \SDyck_{2n}^{\max}$ with $w' \prec w$
and $V^{\half}(w)\setminus\{p\}
\subseteq V^{\half}(w')$. It follows that $p\in R(w)$.

\end{proof}

\section{Type \texorpdfstring{$D$}~} \label{sec:d}

\subsection*{\texorpdfstring{$\mathbf{D_3}$}~} 

 The simplicial complex $\Delta_{D_3}$ is illustrated in \ref{fig:d3}, where it is clearly seen to be non-pure shellable. The homology groups, together with the corresponding $f$- and $h$-triangles, are also displayed in \ref{fig:d3}.
 
 \begin{figure}[ht!]
 \begin{tikzpicture}[
    scale=1,
    vertex/.style={circle,fill,inner sep=1.4pt},
    edge/.style={line width=0.6pt},
    face/.style={fill=gray!20,draw=black,line width=0.6pt}
]

\node[vertex] (a) at (4,1.8) {};
\node[vertex] (b) at (0,1.8) {};
\node[vertex] (c) at (2,0) {};
\node[vertex] (d) at (-2,-0.2) {};
\node[vertex] (e) at (6,0) {};
\node[vertex] (f) at (0,-2) {};

\node[above right]  at (a) {$e_1-e_2$};
\node[above left] at (b) {$e_2-e_3$};
\node[below right] at (c) {$e_2+e_3$};
\node[left]  at (d) {$e_1+e_3$};
\node[below] at (e) {$e_1+e_2$};
\node[right] at (f) {$e_1-e_3$};

\filldraw[face] (a.center) -- (b.center) -- (c.center) -- cycle;

\draw[edge] (d.center) -- (b.center);
\draw[edge] (f.center) -- (c.center);
\draw[edge] (f.center) -- (d.center);

\foreach \v in {a,b,c,d,e,f}
    \fill (\v) circle (1.4pt);

\end{tikzpicture}

$$\begin{array}{c|ccc} i & 0 & 1 & 2 \\
\hline
\widetilde{H}_i & \ZZ & \ZZ & 0 
\end{array}$$

$$(h_{ij})_{0 \leq i \leq j \leq 3} = \left[ \begin{array}{cccc} 0 & & & \\ 0 & 1 && \\ 0 & 2 &1 & \\ 1 & 0 & 0 & 0 
\end{array} \right],\
(f_{ij})_{0 \leq i \leq j \leq 3} = \left[ \begin{array}{cccc} 0 &&& \\ 0 & 1 && \\ 0&2&3&\\ 1&3&3&1 
\end{array} \right].$$
\caption{The simplicial complex $\Delta_{D_3}$, its homology groups and $h$- and $f$-triangles} \label{fig:d3}
\end{figure}

\subsection*{\texorpdfstring{$\mathbf{D_4}$}~}

In order to give the reader a better feeling for why shellability fails for $D_n$ and $n \geq 4$, we provide an additional
separate proof for $n=4$.
Consider the root poset of $D_4$ in \ref{fig:d4}. The maximal antichain
$\{e_1-e_4,e_1+e_4,e_2+e_3\} \in \Delta_{D_4}$ has the property that it is the
unique maximal antichain containing any of 
its $2$-element subsets. 
If $\Delta_{D_4}$ were shellable, then there would be a shelling order starting with the unique $4$-element antichain,
it follows that $\Delta_{D_4}$ cannot be shellable. Nevertheless, \ref{fig:d4fh}
shows that the $h$-triangle still has non-negative entries, however, the homology groups cannot be read off from the diagonal of the $h$-triangle, which provides further evidence of non-shellability.

The non-shellability argument just presented for $D_4$ relies on the following well-known criterion.

\begin{lemma}\label{lem:nonshel} Let $\Delta$ be a simplicial complex of dimension $d$. 
Assume there
is a maximal face $F \in \Delta$ of dimension $d' < d$ such that for all 
$x \in F$,
 $F$ is the unique maximal face of $\Delta$ containing $F
\setminus \{x\}$. 
Then $\Delta$ is not shellable.
\end{lemma}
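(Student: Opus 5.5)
We argue by contradiction, assuming $\Delta$ has a shelling order $\prec$ on its maximal faces $F_1 \prec F_2 \prec \cdots \prec F_m$. Every maximal face of a shellable complex is part of the order, so $F = F_t$ for some $t$. Two cases arise, depending on whether $F$ is the first face.

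\emph{Case $t=1$.} Since $\dim F = d' < d$, there is a maximal face $G$ of dimension $d$ with $F \prec G$. Non-pure shellings can be rearranged so that faces appear in weakly decreasing dimension (Björner--Wachs, the rearrangement lemma behind \cite[Theorem 4.1]{BjoWac96}). Hence we may assume from the start that the shelling order is dimension-decreasing. Then the first face has dimension $d$, so $t\neq 1$. If we prefer not to invoke rearrangement, we use the shelling condition for the pair $F \prec G$ directly: there is $\gamma \prec G$ with $F\cap G \subseteq \gamma\cap G = G\setminus\{y\}$. This alone gives no contradiction, so the rearrangement step is the cleanest route and the one I would use.

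\emph{Case $t>1$.} Apply the shelling condition to $F_1 \prec F$. It gives some $\gamma \prec F$ and some $x \in F$ with
\[
F_1\cap F \subseteq \gamma\cap F = F\setminus\{x\}.
\]
Then $\gamma$ is a maximal face different from $F$ that contains $F\setminus\{x\}$. This contradicts the hypothesis that $F$ is the unique maximal face containing $F\setminus\{x\}$. So $F$ cannot occur after the first position either, and no shelling exists.

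The main obstacle is ruling out $t=1$. Here we have to justify that a shelling can be reordered so that higher-dimensional facets come first. The standard fact is Rearrangement Lemma 2.6 of Björner--Wachs, which I will quote rather than reprove. In this rearranged order $F$, being of non-maximal dimension, is not first, and the case $t>1$ applies. The hypothesis $d'<d$ is used exactly at this point; the $t>1$ argument needs only the uniqueness property of $F$. We also need $F \neq \varnothing$ so that some $x\in F$ exists. This holds since $F$ is a maximal face with $d' \geq 0$ in the application, and if $F=\varnothing$ were maximal then $\Delta=\{\varnothing\}$, contradicting $d>d'$.
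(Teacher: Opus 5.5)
Your proof is correct and follows essentially the paper's argument (which the paper gives for its extension to a family $\mathcal{F}$ of facets): $F$ cannot be the first facet because it is not top-dimensional, so the shelling condition supplies an earlier facet $\gamma\neq F$ with $\gamma\supseteq F\setminus\{x\}$, contradicting uniqueness. The Rearrangement Lemma is avoidable, and your remark that the direct route gives no contradiction holds only for an arbitrary choice of $G$: if you take $G$ to be the first $d$-dimensional facet, then the facet $\gamma\prec G$ with $\gamma\cap G=G\setminus\{y\}$ must properly contain the non-maximal face $G\setminus\{y\}$, so $\gamma$ is itself $d$-dimensional and precedes $G$, which is impossible.
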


\begin{figure}[ht!]
\centering
\begin{tikzpicture}[
    scale=1,
    dot/.style={circle,fill,inner sep=2.5pt},
    edge/.style={line width=0.5pt}
]

\node[dot] (a1) at (-3,0) {};
\node[below left]  at (a1) {$e_1-e_2$};

\node[dot] (a2) at (-1,0) {};
\node[below left]  at (a2) {$e_2-e_3$};

\node[dot] (a3) at (1,0) {};
\node[below right] at (a3) {$e_3-e_4$};

\node[dot] (a4) at (3,0) {};
\node[below right] at (a4) {$e_3+e_4$};

\node[dot] (a12) at (-2,1.4) {};
\node[above left] at (a12) {$e_1-e_3$};

\node[dot] (a23) at (0,1.4) {};
\node[right] at (a23) {$~e_2-e_4$};

\node[dot] (a24) at (2,1.4) {};
\node[above right] at (a24) {$e_2+e_4$};

\node[dot] (a123) at (-2,2.8) {};
\node[above left] at (a123) {$e_1-e_4$};

\node[dot] (a124) at (0,2.8) {};
\node[above right] at (a124) {$e_1+e_4$};

\node[dot] (a234) at (2,2.8) {};
\node[above right] at (a234) {$e_2+e_3$};

\node[dot] (a1234) at (0,4.2) {};
\node[above right] at (a1234) {$e_1+e_3$};

\node[dot] (highest) at (0,5.6) {};
\node[above right] at (highest) {$e_1+e_2$};

\draw[edge] (a1) -- (a12);
\draw[edge] (a2) -- (a12);
\draw[edge] (a2) -- (a23);
\draw[edge] (a2) -- (a24);
\draw[edge] (a3) -- (a23);
\draw[edge] (a4) -- (a24);

\draw[edge] (a12) -- (a123);
\draw[edge] (a12) -- (a124);
\draw[edge] (a23) -- (a123);
\draw[edge] (a23) -- (a234);
\draw[edge] (a24) -- (a124);
\draw[edge] (a23) -- (a123);
\draw[edge] (a23) -- (a234);
\draw[edge] (a24) -- (a124);
\draw[edge] (a24) -- (a234);

\draw[edge] (a123) -- (a1234);
\draw[edge] (a124) -- (a1234);
\draw[edge] (a234) -- (a1234);

\draw[edge] (a1234) -- (highest);

\end{tikzpicture}
\caption{Root poset for $D_4$} \label{fig:d4}
\end{figure}

\begin{figure}[ht!]
{\small 
$$\begin{array}{c|cccc} i & 0 & 1 & 2 & 3\\
\hline
\widetilde{H}_i(\,\Delta_{D_4}\,) & \ZZ^2 & \ZZ^5 & \ZZ & 0 
\end{array}$$

$$(h_{ij})_{0 \leq j \leq i \leq 4} =  \left[ \begin{array}{ccccc}
  0 &&&& \\ 0 & 2 &&& \\ 0 & 0 & 6 && \\ 0 & 6 & 0 & 2 & \\ 1 & 0 & 0 &0 &0 
  \end{array} \right],\
(f_{ij})_{0 \leq j \leq i \leq 4} = \left[ \begin{array}{ccccc}
0 & & & & \\ 0 & 2 & & & \\ 0 & 8 & 6 & & \\ 0 & 6 & 12 & 8 & \\ 1&4&6&4&1 
\end{array} \right].
$$}
\caption{Homology groups and $h$- and $f$-triangle for $\Delta_{D_4}$}
\label{fig:d4fh}
\end{figure}

\subsection*{\texorpdfstring{$\mathbf{D_{\geq 5}}$}~}

We use the standard simple system of type $D_n$,
\[
\alpha_i=e_i-e_{i+1},\,\,\, 1\leq i\leq n-1,
\qquad
\alpha_n=e_{n-1}+e_n.
\]
The positive roots are $e_i-e_j$ and $e_i+e_j$, where
$1\leq i<j\leq n$, and their expansions in terms of the simple
roots are
$
e_i-e_j=\alpha_i+\cdots+\alpha_{j-1}
$
and
\begin{align} \label{6.1}
e_i+e_j & = 
\begin{cases}
\alpha_i+\cdots+\alpha_{j-1}
 +2(\alpha_j+\cdots+\alpha_{n-2})
 +\alpha_{n-1}+\alpha_n,
 & j<n,\\[1mm]
\alpha_i+\cdots+\alpha_{n-2}+\alpha_n,
 & j=n.
\end{cases}
\end{align} 

For case $n$ odd, we will need the following slight extension of
\ref{lem:nonshel}.

\begin{lemma}\label{lem:nonshel-family}
Let $\Delta$ be a simplicial complex of dimension $d$.
Assume there exists a non-empty collection $\cF$ of facets of $\Delta$ such that
\begin{itemize}
    \item if $F \in \cF$, then $\dim(F) < d$; and
    \item if for a facet $G$ of $\Delta$ there is $F \in \cF$ and $x \in F$ such that $F \setminus \{x\} \subseteq G$, then $G \in \cF$.
\end{itemize}
Then $\Delta$ is not shellable.
\end{lemma}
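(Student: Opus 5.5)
Suppose, for contradiction, that $\prec$ is a shelling order of the maximal faces of $\Delta$. Every maximal face of dimension $d$ lies outside $\cF$, so the first facet $F_1$ of the order is not in $\cF$. Hence there is a first facet $F$ in the order that belongs to $\cF$, and $F \neq F_1$. The plan is to show that $F$ would have to be attached along a codimension-one face, and that this forces an earlier facet into $\cF$.

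First, since $F$ is not the first facet, choose any facet $\sigma \prec F$. The shelling condition then gives a facet $\gamma \prec F$ and an element $x \in F$ with
\[
\sigma \cap F \subseteq \gamma \cap F = F \setminus \{x\}.
\]
In particular $F \setminus \{x\} \subseteq \gamma$.

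Next, apply the second hypothesis on $\cF$ with $G = \gamma$. It yields $\gamma \in \cF$. But $\gamma \prec F$, which contradicts the choice of $F$ as the first member of $\cF$ in the order. Therefore no shelling order exists, and $\Delta$ is not shellable.

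The only point that needs care is the existence of the first facet $F_1$ outside $\cF$. This requires that $\Delta$ actually has a facet of dimension $d$, which holds by the definition of $\dim \Delta$. Such a facet is not in $\cF$ by the first hypothesis, and so the first element of $\cF$ in the order is never the first facet overall. This is also the reason the hypothesis $\dim(F) < d$ is needed at all. The rest of the argument is immediate, and no step is a genuine obstacle. \ref{lem:nonshel} is recovered as the special case $\cF = \{F\}$.
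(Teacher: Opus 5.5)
Your core argument is the same as the paper's. You take the first member $F$ of $\cF$ in the shelling order, use the shelling condition to produce an earlier facet $\gamma$ with $F\setminus\{x\}\subseteq\gamma$, and derive $\gamma\in\cF$ from the closure hypothesis, which is a contradiction. There is, however, a real gap in the one step you single out as needing care, namely that $F$ is not the first facet of the order.

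Your justification does not work. Knowing that $\Delta$ has some $d$-dimensional facet, and that this facet lies outside $\cF$, says nothing about where it sits in $\prec$. So it does not follow that the first member of $\cF$ cannot be the first facet. Your opening sentence also silently assumes that the first facet $F_1$ has dimension $d$. The definition of shelling in the paper allows non-pure orders and does not require facets to be listed by decreasing dimension, so this needs an argument.

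The fact you need is that every shelling order begins with a facet of maximal dimension. It follows directly from the shelling condition:
\begin{itemize}
\item If a facet $\tau$ is not first, there is $\gamma\prec\tau$ and $x\in\tau$ with $\gamma\cap\tau=\tau\setminus\{x\}$.
\item Since $\gamma\neq\tau$ are both maximal, $\gamma\not\subseteq\tau$. Hence $\gamma$ contains $\tau\setminus\{x\}$ together with some $y\notin\tau$, so $|\gamma|\geq|\tau|$.
\item Apply this to the $\prec$-least facet of dimension $d$. If it were not first, there would be an earlier facet of dimension $d$, contradicting the choice. So it is the first facet overall, and it lies outside $\cF$ by the first hypothesis.
\end{itemize}
Alternatively, you may invoke the Bj\"orner--Wachs rearrangement lemma, which lets you assume the shelling lists facets by decreasing dimension.

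The paper's proof uses this same fact without comment when it says $F$ cannot be first because $\dim F<d$. With this short addition your proof is complete and coincides with the paper's. Your remark that \ref{lem:nonshel} is the special case $\cF=\{F\}$ is correct.
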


\begin{proof}
Suppose that $\Delta$ has a shelling order, and let $F$ be the
first facet in this order that belongs to $\cF$. The facet $F$ cannot be the first facet in the shelling, since
every facet in $\cF$ has dimension smaller than $d$.
By the shelling, there must be a facet $G$ preceding $F$ and an element
$x\in F$ such that
$
G\cap F=F\setminus\{x\}
$, and hence $F \setminus \{x\} \subseteq G$.
By the hypothesis on $\cF$, this implies 
$G\in\cF$, contradicting the choice of $F$.
\end{proof}

\begin{theorem}\label{thm:Dn-nonshellable}
For every $n\geq4$, the simplicial complex
$\Delta_{D_n}$ is not shellable.
\end{theorem}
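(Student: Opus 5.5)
We will apply \ref{lem:nonshel} when $n$ is even and \ref{lem:nonshel-family} when $n$ is odd. In both cases the obstruction is built from roots of the form $e_i\pm e_j$, generalising the $D_4$ facet $\{e_1-e_4,e_1+e_4,e_2+e_3\}$. Throughout, the order is read off from the expansions \eqref{6.1}. Concretely, $e_i-e_j\le e_k-e_l$ iff $k\le i<j\le l$. Also $e_i-e_j\le e_k+e_l$ iff $k\le i$ and $j\le\max$-type conditions hold; roughly, $e_i-e_j$ lies below $e_k+e_l$ unless $i<k$, or unless both $j>l$ and $j=n$ (the $\alpha_{n-1}$ versus $\alpha_n$ asymmetry). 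The relation $e_i+e_j\le e_k+e_l$ holds iff $k\le i$ and $l\le j$. We first record this comparability criterion precisely as a small lemma, since every later step reduces to it.

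\textbf{Dimension and the candidate facets.} The maximal antichain size in $D_n$ is $n$; the simple roots give one. For $n=2m$ even, we take the candidate facet
\[F=\{e_1-e_n,\,e_1+e_n\}\cup\{e_{i}+e_{n+1-i}\,:\,2\le i\le m\}.\]
This has size $m+1<n$ for $n\ge 4$, and for $n=4$ it is exactly the $D_4$ example. We first check via the lemma that $F$ is an antichain. We then check that it is maximal: every positive root is comparable to some element of $F$. Roots $e_i-e_j$ with $j<n$ lie below $e_1-e_n$ or $e_1+e_n$. Roots $e_i+e_j$ are compared with the nested chain $e_i+e_{n+1-i}$, which forms a "staircase" covering the upper part of the poset.

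\textbf{The key step.} For each $x\in F$, we must show that $F$ is the unique facet containing $F\setminus\{x\}$. Equivalently, every root $r\notin F$ incomparable to all of $F\setminus\{x\}$ must be comparable to $x$ and must not extend $F\setminus\{x\}$ to a different antichain. The cleanest route is to identify, for each $x$, the set $I_x$ of roots incomparable to all of $F\setminus\{x\}$. We then show that $I_x\cup\{x\}$ is a chain, or more generally that every antichain in $I_x$ has size $\le 1$ and is contained in a facet only together with $x$. Removing $e_1\pm e_n$ leaves a "gap" between $e_2-e_n$ and $e_2+e_n$ (as in $D_4$), and one checks that the only root filling it is the removed one. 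Removing $e_i+e_{n+1-i}$ frees an interval of the staircase whose incomparable roots form a chain ending at that root. This case analysis is the main obstacle; we would verify it first for $n=6$ by hand to calibrate the exact inequalities.

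\textbf{Odd $n$.} For $n=2m+1$ the staircase has a leftover middle index, and a single facet no longer works. Instead we take $\cF$ to be the set of facets of the form $\{e_1-e_n,e_1+e_n\}\cup S$, where $S$ ranges over the few ways of finishing the staircase around the middle indices $m+1,m+2$ (e.g.\ $e_{m+1}+e_{m+2}$, or a root $e_{m+1}-e_{m+2}$-type replacement). All of these have size about $m+2<n$. Next we verify the closure condition of \ref{lem:nonshel-family}: if $G\supseteq F\setminus\{x\}$ for some $F\in\cF$, then $G\in\cF$. For $x$ in the outer part, the argument from the even case forces $G=F$. For $x$ at the middle, the freed region consists exactly of the alternatives defining $\cF$. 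With both cases settled, \ref{lem:nonshel} and \ref{lem:nonshel-family} give non-shellability for all $n\ge4$.
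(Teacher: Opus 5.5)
Your overall plan (\ref{lem:nonshel} for even $n$, \ref{lem:nonshel-family} for odd $n$) and your even-case facet agree with the paper's. However, the odd case has a genuine gap, and the even-case verification aims at the wrong target.

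\textbf{Odd case.} The family of facets of the form $\{e_1-e_n,e_1+e_n\}\cup S$ is not closed in the sense required by \ref{lem:nonshel-family}. Your claim that for outer $x$ ``the even-case argument forces $G=F$'' is false. Take $n=5$. The roots incomparable to both $e_1-e_5$ and $e_1+e_5$ form the chain $e_3+e_4<e_2+e_4<e_2+e_3$, so any such closed family must contain $\{e_1-e_5,\,e_1+e_5,\,e_3+e_4\}$. Delete $e_1-e_5$ from it. The root $e_2-e_5=\alpha_2+\alpha_3+\alpha_4$ is incomparable to $e_1+e_5=\alpha_1+\alpha_2+\alpha_3+\alpha_5$ and to $e_3+e_4=\alpha_3+\alpha_4+\alpha_5$. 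This gives the facet $\{e_2-e_5,\,e_1+e_5,\,e_3+e_4\}$, which does not contain $e_1-e_5$.

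In general, $e_1-e_n$ can be traded for $e_2-e_n$ as soon as $S$ contains no root $e_2+e_l$. Closing up under exchanges inside $S$ forces such sets $S$ into your family. Note also that no ``$e_{m+1}-e_{m+2}$-type'' root can appear in $S$, since every $e_i-e_j$ lies below $e_1-e_n$.

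The missing idea is to run the even construction on the $2m$ indices $2,\dots,n$ and leave index $1$ free. Set $B=\{e_2-e_n\}\cup\{e_i+e_{n-i+2}:2\le i\le m+1\}$. The roots incomparable to all of $B$ are exactly the chain $e_1-e_j$, $2\le j\le n-1$ (see \eqref{6.4}). Hence $\cF=\{B\cup\{e_1-e_j\}\}$ is automatically closed under exchanging the free element, and each $b\in B$ is rigid by \eqref{6.5}. Taking the full closure of your family might also work: for $n=5$ it consists of eight facets of dimension at most $3$, among them the paper's $F_j$. But identifying and controlling that closure for all odd $n$ is exactly what your proposal does not do.

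\textbf{Even case.} Your facet is the paper's $\{\beta_0,\dots,\beta_m\}$ and \ref{lem:nonshel} is the right tool, but two points need fixing.

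First, your rough rule for $e_i-e_j$ versus $e_k+e_l$ is backwards where it matters. By \eqref{6.1}, $e_i-e_j\le e_k+e_l$ if and only if $k\le i$ and not $j=l=n$. Also, $e_k+e_l$ is never below $e_i-e_j$ (compare the $\alpha_n$-coefficients). Under your rule $e_1-e_n\le e_1+e_n$, so $F$ would not be an antichain.

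Second, and more importantly, showing that $I_x\cup\{x\}$ is a chain does not give uniqueness. Any $r\neq x$ in $I_x$ makes $(F\setminus\{x\})\cup\{r\}$ an antichain, which yields a second facet through $F\setminus\{x\}$. This is exactly what goes wrong in the odd case. You need $I_x=\{x\}$.

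The paper gets this at once by showing that every positive root outside $F$ is comparable to at least two elements of $F$. For $e_\ell+e_j$ with $j<n$ this follows from \eqref{6.2} and \eqref{6.3}. Such a root is comparable to every $\beta_i$ with $i$ between $\ell$ and $n-j+1$, and this range meets $\{1,\dots,m\}$ in at least two indices unless $\ell=n-j+1$.
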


\begin{proof}
By general facts about root posets and their antichains (see \cite{Arm09} or \cite{Rin18}), the root poset of type $D_n$ has width $n$, and its simple roots
form its unique $n$-element antichain. Hence
$
\dim\Delta_{D_n}=n-1.
$
We distinguish between even and odd $n$.

\medskip

\noindent\textbf{Case:} $n=2m$ is even.

Define
$
\beta_0=e_1-e_n
$
and
$
\beta_i=e_i+e_{n-i+1},
 1\leq i\leq m = \frac{n}{2},
$,
and set
$
F=\{\beta_0,\beta_1,\ldots,\beta_m\}.
$
Every root in $F$ is a sum of $n-1$ simple roots and hence has height $n-1$ in the root poset. It follows that $F$ is an antichain.
It has $m+1$ elements and hence dimension $m = \frac{n}{2} <n-1$.

We claim that every positive root $\gamma \notin F$ is comparable to at
least two elements of $F$.

First, let $\gamma=e_\ell-e_j$. Then
$
\gamma\leq\beta_0 = e_1-e_n.
$
If $j<n$, then also
$
\gamma\leq\beta_1=e_1+e_n.
$
If $j=n$, then $\ell \geq 2$ since $\gamma\neq\beta_0$, and
$
\gamma\leq\beta_2=e_2+e_{n-1}.
$

It remains to consider $\gamma=e_\ell+e_j$. If $j=n$, then by 
$\gamma\neq \beta_1$ we have $\ell\geq2$. In this case
$
\gamma\leq\beta_1
$
and
$
\gamma\leq\beta_2.
$
Suppose that $j<n$. A comparison of the coefficients in
\eqref{6.1} gives
\begin{align} \label{6.2}
\gamma\leq\beta_i
\quad & \Longleftrightarrow\quad
i\leq\ell
 \text{ and }
n-i+1\leq j,
\end{align}
and
\begin{align} \label{6.3}
\beta_i\leq\gamma
\quad & \Longleftrightarrow\quad
\ell\leq i
\text{ and }
j\leq n-i+1.
\end{align}

Put $r=n-j+1$. The equivalences~\eqref{6.2} and~\eqref{6.3} show that
$\gamma$ is comparable to every $\beta_i$ with $i$ lying
between $\ell$ and $r$, subject to $1\leq i\leq m$.
If $\ell=r$, then
$
\gamma=e_\ell+e_{n-\ell+1}=\beta_\ell\in F.
$
Otherwise, since $n=2m$, the interval between $\ell$ and $r$
contains at least two indices from $\{1,\ldots,m\}$. 

Hence every
positive root outside $F$ is comparable to at least two elements
of $F$.
It follows that, for every $\beta\in F$, no positive root outside
$F$ can be added to $F\setminus\{\beta\}$. Thus $F$ is the
unique facet containing $F\setminus\{\beta\}$. By
\ref{lem:nonshel}, $\Delta_{D_n}$ is not shellable.

\medskip

\noindent\textbf{Case:} $n=2m+1$ is odd.

We define
\[
B=
\big\{\,e_2-e_n\,\big\}
\cup
\big\{\,e_i+e_{n-i+2}\,:\,2\leq i\leq m+1\,\big\}.
\]
For every $j\in\{2,\ldots,n-1\}$, set
$
F_j=B\cup \big\{\,e_1-e_j\,\big\},
$
and let
$
\cF= \big\{\,F_j\,:\,2\leq j\leq n-1\,\big\}.
$

A comparison of the simple-root coefficients in \eqref{6.1} 
gives
\begin{align} \label{6.4}
\left\{
\,\gamma\in\Phi_+(D_n)\,:\,
\genfrac{}{}{0pt}{}{\gamma \text{ is incomparable}}{\text{with every element of } B}\,
\right\}
& =
\big\{\,e_1-e_j\,:\,2\leq j\leq n-1\,\big\}.
\end{align}

The roots on the right-hand side of \eqref{6.4} form a chain. It
follows that every $F_j$ is a maximal antichain.

Moreover, if $b\in B$, then for every $j \in \{2,\ldots, n-1\}$
another coefficientwise comparison
gives  that
\begin{align} \label{6.5}
\left\{
\,\gamma\in\Phi_+(D_n)\,:\,
\genfrac{}{}{0pt}{}{\gamma\text{ is incomparable with every}}{\text{element of }
(B\setminus\{b\})\cup\{e_1-e_j\}}
\,\right\}
=
\{b\}.
\end{align}

Consequently:

\begin{itemize}
\item every facet containing $B=F_j\setminus\{e_1-e_j\}$ is one
      of the facets $F_k$, where $2\leq k\leq n-1$;
\item for $b\in B$, the facet $F_j$ is the unique facet
      containing $F_j\setminus\{b\}$.
\end{itemize}

Thus, whenever a facet contains a codimension-one face of some
$F_j\in\cF$, it also belongs to $\cF$.
Since
$
|B|=m+1
$
and
$
|F_j|=m+2,
$
every facet in $\cF$ has dimension $m+1<2m=n-1 = \dim(\Delta_{D_n})$.
Therefore, \ref{lem:nonshel-family} implies that
$\Delta_{D_n}$ is not shellable.
\end{proof}

While the $h$-triangle for $D_5$ still has non-negative entries
(see \ref{fig:d5fh}), for $D_6$
this is no longer the case (see \ref{fig:d6fh}). Experimental data suggest that this failure occurs for all $n \geq 6$.

\begin{figure}[H]
{\small
$$\begin{array}{c|ccccc} i & 0 & 1 & 2 & 3 & 4   \\
\hline
\widetilde{H}_i(\,\Delta_{D_5}\,) & \ZZ^2  & \ZZ^{10} & \ZZ^8 & 0 & 0  
\end{array}$$

$$(h_{ij})_{0 \leq j \leq i \leq 5} = \left[ \begin{array}{cccccc}
0 & & & & &  \\ 0 & 2 & &&&\\ 0 & 1 & 10 &&& \\  0 & 4 & 11 & 9 &&  \\
0 & 8 & 6 & 0 & 1 & \\ 1 & 0& 0& 0& 0&0 
\end{array} \right],\
(f_{ij})_{0 \leq j \leq i \leq 5} = \left[ \begin{array}{cccccc} 
0 & & & & & \\
0 & 2 & & & & \\ 
0 & 1 & 11 & && \\ 
0 & 4 & 19 & 24 & & \\ 
0 & 8 & 30 & 36 & 15 &  \\ 
1 & 5 & 10 & 10 & 5 & 1 
\end{array} \right]$$}
\caption{Homology groups and $h$- and $f$-triangles for $\Delta_{D_5}$}
\label{fig:d5fh}
\end{figure}

\begin{figure}[H]
{\small
$$\begin{array}{c|cccccc} i & 0 & 1 & 2 & 3 & 4 & 5  \\
\hline
\widetilde{H}_i(\,\Delta_{D_6}\,) & \ZZ^2 & \ZZ^{14} & \ZZ^{26} & \ZZ^ 6 & 0 & 0 
\end{array}$$

$$(h_{ij})_{0 \leq j \leq i \leq 6} = \left[ \begin{array}{ccccccc}
0 & & & & & &  \\ 0 & 2 & & & & & \\
0 & 2 & 14 & & & & \\
0 & 2 & 26 & 26 & & &  \\ 0 & 8 & 26 & 30 & 6 & & \\
0 & 10 & 14 & -2 & -2 & 0 & \\
1 & 0& 0&0 &0 &0& 0
\end{array} \right],\
(f_{ij})_{0 \leq j \leq i \leq 6} = \left[ \begin{array}{ccccccc}
0 & & & & & & \\
 0 & 2 & & & & & \\
 0 & 2 & 16 & & & & \\
0 & 2 & 30 & 54 & & & \\
0 & 8 & 59 & 106 & 70 && \\
0 & 10 & 54 & 100 & 80 & 24 & \\ 1 & 6 & 15 & 20 & 15 & 6 & 1 \end{array} \right].$$}
\caption{Homology groups and $h$- and $f$-triangles for $\Delta_{D_6}$}
\label{fig:d6fh}
\end{figure}
  
\section{Exceptional Types} \label{sec:exc}

\subsection*{\texorpdfstring{$\mathbf{E_6, E_7,E_8}$}~} 

Consider the root poset of $E_6$ \cite[page 1502]{Rin18},
$E_7$ \cite[page 1503]{Rin18}, and $E_8$ \cite[page 1504]{Rin18}.
For \(n = 6, 7, 8\), let \(F_n\) be the set of all roots of rank \(n-1\) in the root poset of \(E_n\). In each case, taking \(F = F_n\) and \(d = n-1\), with \(d' = 3\) for \(n = 6\), \(d' = 4\) for \(n = 7\), and \(d' = 7\) for \(n = 8\), the conditions of \ref{lem:nonshel} are satisfied. Hence \(\Delta_{E_n}\) is not shellable for all \(n = 6, 7, 8\).

\begin{figure}[H]
\begin{tikzpicture}[
  root/.style={circle,draw,thick,inner sep=2.2pt},
  every label/.style={font=\small}
]
\node[root,label=below:$\alpha_1$] (a1) at (0,0) {};
\node[root,label=below:$\alpha_3$] (a3) at (1.5,0) {};
\node[root,label=below:$\alpha_4$] (a4) at (3.0,0) {};
\node[root,label=below:$\alpha_5$] (a5) at (4.5,0) {};
\node[root,label=below:$\alpha_6$] (a6) at (6.0,0) {};
\node[root,label=above:$\alpha_2$] (a2) at (3.0,1.5) {};

\draw[thick] (a1) -- (a3) -- (a4) -- (a5) -- (a6);
\draw[thick] (a4) -- (a2);
\end{tikzpicture}
\caption{Dynkin diagram for $E_6$} \label{fig:dye6}
\end{figure}

In \ref{fig:e6}, \ref{fig:e7} and \ref{fig:e8} we also list the 
$f$- and $h$-triangles and the homology groups of $\Delta_{E_n}$. 
 The presence of negative entries in the \(h\)-triangle and the fact that its diagonal entries do not coincide with the Betti numbers provides another proof of non-shellability.

\begin{figure}[H]
{\small
$$\begin{array}{c|cccccc} i & 0 & 1 & 2 & 3 & 4 & 5\\
\hline
\widetilde{H}_i(\Delta_{E_6}) & \ZZ^ 3 & \ZZ^{16} & \ZZ^{33} & \ZZ^ 7 & 0 & 0 
\end{array}$$

$$(h_{ij})_{0 \leq j \leq i \leq 6}  = \left[\begin{array}{ccccccc}
0 &   &     &     &     &    &  \\
0 & 3 &     &     &     &    &  \\
0 & 2 & 16  &     &     &    &  \\
0 & 6 & 30  & 33  &     &    &  \\
0 & 3 & 50  & 27 & 9  &    &  \\
0 & 16 & 6 & 12 & -6 & 2 & \\
1 & 0 & 0 & 0 & 0 & 0 & 0
\end{array}\right],$$
$$
(f_{ij})_{0 \leq j \leq i \leq 6} = \left[\begin{array}{ccccccc}
0 &   &     &     &     &    &  \\
0 & 3 &     &     &     &    &  \\
0 & 2 & 18  &     &     &    &  \\
0 & 6 & 42  & 69  &     &    &  \\
0 & 3 & 59  & 136 & 89  &    &  \\
0 & 16 & 70 & 126 & 100 & 30 & \\
1 & 6 & 15 & 20 & 15 & 6 & 1
\end{array}\right].$$}
 \caption{Homology groups and $h$- and $f$-triangles for $\Delta_{E_6}$}
\label{fig:e6}
\end{figure}
 
\begin{figure}[H]
{\small
$$\begin{array}{c|ccccccc} i & 0 & 1 & 2 & 3 & 4 & 5 & 6\\
\hline
\widetilde{H}_i(\Delta(E_7)) & \ZZ^4 & \ZZ^{29} & \ZZ^{91} & \ZZ^{68} & \ZZ^2 & 0 & 0
\end{array}$$

 $$(h_{ij})_{0 \leq j \leq i \leq 7}  = \left[\begin{array}{cccccccc}
  0 &    &     &     &     &     &   & \\ 
  0 & 4  &     &     &     &     &   & \\
  0 & 4  & 29  &     &     &     &   & \\
  0 & 4  & 71  & 91  &     &     &   & \\
  0 & 10 & 86  & 203 & 68  &     &   & \\
  0 & 10 & 109 & 128 & 34 & 4 &   & \\
  0 & 24 & 28 & -3 & 12 & -7 & 2& \\
  1 & 0  & 0  & 0 & 0 & 0 & 0 & 0 
  \end{array}\right],
 $$
 
 $$(f_{ij})_{0 \leq j \leq i \leq 7}  = \left[
 \begin{array}{cccccccc}
  0 &    &     &     &     &     &   & \\ 
  0 & 4  &     &     &     &     &   & \\
  0 & 4  & 33  &     &     &     &   & \\
  0 & 4  & 79  & 166 &     &     &   & \\
  0 & 10 & 116 & 405 & 367 &     &   & \\
  0 & 10 & 149 & 515 & 657 & 285 &   & \\
  0 & 24 & 148 & 349 & 411 & 240 & 56& \\
  1 & 7  & 21  & 35 & 35 & 21 & 7 & 1 
  \end{array}\right].$$}
 \caption{Homology groups and $h$- and $f$-triangles for $\Delta_{E_7}$}
\label{fig:e7}
\end{figure}

\begin{figure}[H]
{\small $$\begin{array}{c|cccccccc} i & 0 & 1 & 2 & 3 & 4 & 5 & 6 & 7\\
\hline
\widetilde{H}_i(\Delta_{E_8}) & \ZZ^6 & \ZZ^{59} & \ZZ^{281} & \ZZ^{454} & \ZZ^{106} & 0 & 0
& 0 
\end{array}$$

 $$ 
  (h_{ij})_{0 \leq j \leq i \leq 8}=\left[ \begin{array}{ccccccccc}
   0 & & & & & & & & \\
   0 & 6 & & & & & & & \\
   0 & 8 & 59 & & & & & &  \\
   0 & 6 & 139 & 283 & & & & & \\
   0 & 16 & 204 & 681 & 466 & & & & \\
   0 & 10 & 295 & 804 & 652 & 119 & & & \\
   0 & 24 & 279 & 467 & 125 & 26 & 5 & & \\
   0 & 42 & 56 & 17 & -12 & 15 & -8 & 2 & \\
   1 & 0  &  0 &  0 &   9 &  0 &  0 & 0 & 0
   \end{array}
   \right], $$

$$(f_{ij})_{0 \leq j \leq i \leq 8} =\left[ \begin{array}{ccccccccc}
   0 & & & & & & & & \\
   0 & 6 & & & & & & & \\
   0 & 8 & 67 & & & & & &  \\
   0 & 6 & 151 & 428 & & & & & \\
   0 & 16 & 252 & 1137 &  1367 & & & & \\
   0 & 10 & 335 & 1749 & 3185 & 1880 & & & \\
   0 & 24 & 399 & 1823 & 3440 & 2913 & 92 & & \\
   0 & 42 & 308 & 927 & 1456 & 1271 & 586 & 112 & \\
   1 & 8 & 28 & 56 & 70 & 56 & 28 & 8 & 1
   \end{array}
   \right]. $$}
 \caption{Homology groups and $h$- and $f$-triangles for $\Delta_{E_8}$}
\label{fig:e8}
\end{figure}

 \subsection*{\texorpdfstring{$\mathbf{G_2}$}~}

The root poset of $G_2$ consists of an antichain of size $2$ and a $4$-element chain whose elements all are larger than the two elements in the antichain.
It follows that $\Delta_{G_2}$ consists of one $1$-dimensional face and $4$
isolated vertices. Thus the complex is non-pure shellable
(see \ref{fig:g2} for the homology groups and $f$- and $h$-triangles).

\begin{figure}[H]
{\small $$\begin{array}{c|cc} i & 0 & 1 \\
\hline
\widetilde{H}_i(\,\Delta_{G_2}\,) & \ZZ^4 & 0 
\end{array}$$

 $$ 
  (h_{ij})_{0 \leq j \leq i \leq 2} = \left[
 \begin{array}{ccc}
 0 & & \\
 0 & 4 & \\
 1 & 0 & 0
 \end{array} \right],\
  (f_{ij})_{0 \leq j \leq i \leq 2} = \left[
 \begin{array}{ccc}
 0 & & \\
 0 & 4 & \\
 1 & 2 & 1
 \end{array} \right].$$}
 \caption{Homology groups and $h$- and $f$-triangles for $\Delta_{G_2}$}
\label{fig:g2}
 \end{figure}

 \subsection*{\texorpdfstring{$\mathbf{F_4}$}~}

Consider a choice of simple roots $\alpha_1,\alpha_2,\alpha_3,\alpha_4$
numbered from left to right in the Dynkin diagram of $F_4$ (see \ref{fig:dyf4}).

\begin{figure}[H]
\centering

\begin{tikzpicture}[
  root/.style={circle,draw,thick,inner sep=2.2pt},
  label/.style={font=\small}
]
\node[root] (a1) at (0,0) {}; 
\node[root] (a2) at (1.6,0) {}; 
\node[root] (a3) at (3.4,0) {};
\node[root] (a4) at (5.0,0) {};

\draw[thick] (a1) -- (a2);
\draw[thick] ($(a2)+(0,0.11)$) -- ($(a3)+(0,0.11)$);
\draw[thick] ($(a2)+(0,-0.11)$) -- ($(a3)+(0,-0.11)$);
\draw[thick] (a3) -- (a4);

\node [yshift=0.4cm] at (a1) {$\alpha_1$};
\node [yshift=0.4cm] at (a2) {$\alpha_2$};
\node [yshift=0.4cm] at (a3) {$\alpha_3$};
\node [yshift=0.4cm] at (a4) {$\alpha_4$};
\end{tikzpicture}
\bigskip
\caption{Dynkin diagram of $F_4$} \label{fig:dyf4}
\end{figure}

Then $\dim(\Delta_{F_4}) = 3$, and
$\alpha_1+\alpha_2+\alpha_3$, $\alpha_2+2\alpha_3$ and $\alpha_2+\alpha_3+\alpha_4$ are all roots of rank $3$ in the root poset. Hence they 
form a maximal antichain and
a $2$-dimensional facet $F$ of $\Delta_{F_4}$. 
It can be verified that
this facet satisfies the assumptions of \ref{lem:nonshel} for $d = 3$ and
$d'=2$, and hence $\Delta_{F_4}$ is not shellable.

\begin{figure}[H]
{\small 
$$\begin{array}{c|cccc} i & 0 & 1 & 2 & 3\\
\hline
\widetilde{H}_i(\,\Delta_{F_4}\,) & \ZZ^4 & \ZZ^{14} & \ZZ & 0  
\end{array}$$

$$ (h_{ij})_{0 \leq j \leq i \leq 4} = \left[\begin{array}{ccccc}
 0 &    &    &    & \\
  0 & 4  &    &    & \\
  0 & 4  & 15 &    &  \\
  0 & 12 & 6 & 2 & \\
  1 & 0 & 0 & 0 & 0 
  \end{array}\right],\
(f_{ij})_{0 \leq i \leq j \leq 4} = \left[
 \begin{array}{ccccc}
  0 &    &    &    & \\
  0 & 4  &    &    & \\
  0 & 4  & 19 &    &  \\
  0 & 12 & 30 & 20 & \\
  1 & 4 & 6 & 4 & 1 
  \end{array}\right].$$}
  \caption{Homology groups and $h$- and $f$-triangles for $\Delta_{F_4}$}
  \end{figure}

\section*{Appendix}
 \label{sec:app}
In the following, we list the $h$- and $f$-triangles of $\Delta_{\Dyck_n}^{(a,b)}$ for 
$n = 3,4,5,6$ and $(a,b) = (2,1),(3,1)$.

\begin{figure}[H]
{\small

$$ (h_{ij})_{0 \leq j \leq i \leq 2} = \left[\begin{array}{ccc}
 0 &    &     \\
  0 & 1  &    \\
  1 & 3  & 1   
  \end{array}\right],\
(f_{ij})_{0 \leq j \leq i \leq 2} = \left[
 \begin{array}{ccccc}
  0 &    &     \\
  0 & 1  &     \\
  1 & 5  & 5   
  \end{array}\right].$$

 $$ (h_{ij})_{0 \leq j \leq i \leq 3} = \left[\begin{array}{cccc}
 0 &    &   &  \\
  0 & 1  &  &  \\
  0& 2  & 5 & \\
  1 & 6 & 6 & 1 
  \end{array}\right],\
(f_{ij})_{0 \leq j \leq i \leq 3} = \left[
 \begin{array}{ccccc}
  0 &    &   &  \\
  0 & 1  &   &  \\
  0 & 2  & 7 & \\
  1 & 9 & 21 & 14 
  \end{array}\right].$$
  
$$ (h_{ij})_{0 \leq j \leq i \leq 4} = \left[\begin{array}{ccccc}
 0 &    &   &  & \\
  0 & 1  &  &  & \\
  0& 2  & 8 & & \\
  0 & 3 & 18 & 15 & \\
  1 & 19 & 20 & 10 & 1
  \end{array}\right],\
(f_{ij})_{0 \leq j \leq i \leq 4} = \left[
 \begin{array}{cccccc}
  0 &    &   &  & \\
  0 & 1  &   &  & \\
  0 & 2  & 10 & & \\
  0& 3 & 24 & 36 & \\
  1 & 14 & 56 & 84 & 42 
  \end{array}\right].$$

$$ (h_{ij})_{0 \leq j \leq i \leq 5} = \left[\begin{array}{cccccc}
 0 &    &   &     & & \\
  0 & 1  &  &     & &\\
  0& 2  & 11 &    & & \\
  0 & 3 & 27 & 40 & & \\
  1 & 4 & 42 & 84 & 35 & \\
  1 & 15 & 50 & 50 & 15 & 1 
  \end{array}\right],\
(f_{ij})_{0 \leq j \leq i \leq 5} = \left[
 \begin{array}{ccccccc}
  0 &    &   &  & & \\
  0 & 1  &   &  & & \\
  0 & 2  & 13 & & & \\
  0& 3 & 33 & 70 &  & \\
  0 & 4 & 54 & 180 & 165 & \\
  1 & 20 & 120 & 300 & 330 & 132 
  \end{array}\right].$$}

      \caption{$h$- and $f$-triangles for $\Delta^{(2,1)}_{\Dyck_n}$ and $n=3,4,5,6$}
      \label{fig:fc2}
  \end{figure}

\begin{figure}[H]

{\small

$$ (h_{ij})_{0 \leq j \leq i \leq 2} = \left[\begin{array}{ccc}
 0 &    &     \\
  0 & 1  &    \\
  1 & 8  & 12   
  \end{array}\right],\
(f_{ij})_{0 \leq j \leq i \leq 2} = \left[
 \begin{array}{ccccc}
  0 &    &     \\
  0 & 1  &     \\
  1 & 6 & 5   
  \end{array}\right].$$

 $$ (h_{ij})_{0 \leq j \leq i \leq 3} = \left[\begin{array}{cccc}
 0 &    &   &  \\
  0 & 1  &  &  \\
  0& 2  & 9 & \\
  1 & 12 & 28 & 14 
  \end{array}\right],\
(f_{ij})_{0 \leq j \leq i \leq 3} = \left[
 \begin{array}{ccccc}
  0 &    &   &  \\
  0 & 1  &   &  \\
  0 & 2  & 11 & \\
  1 & 15 & 55 & 55 
  \end{array}\right].$$

$$ (h_{ij})_{0 \leq j \leq i \leq 4} = \left[\begin{array}{ccccc}
 0 &    &   &  & \\
  0 & 1  &  &  & \\
  0& 2  & 13 & & \\
  0 & 3 & 33 & 55 & \\
  1 & 20 & 90 & 120 & 42
  \end{array}\right],\
(f_{ij})_{0 \leq j \leq i \leq 4} = \left[
 \begin{array}{cccccc}
  0 &    &   &  & \\
  0 & 1  &   &  & \\
  0 & 2  & 15 & & \\
  0& 3 & 39 & 91 & \\
  1 & 24 & 156 & 364 & 273 
  \end{array}\right].$$

$$ (h_{ij})_{0 \leq j \leq i \leq 5} = \left[\begin{array}{cccccc}
 0 &    &   &     & & \\
  0 & 1  &  &     & &\\
  0& 2  & 17 &    & & \\
  0 & 3 & 45 & 112 & & \\
  1 & 4 & 78 & 312 & 286 & \\
  1 & 30 & 220 & 550 & 495 & 132 
  \end{array}\right],\
(f_{ij})_{0 \leq j \leq i \leq 5} = \left[
 \begin{array}{ccccccc}
  0 &    &   &  & & \\
  0 & 1  &   &  & & \\
  0 & 2  & 19 & & & \\
  0& 3 & 51 & 160 &  & \\
  0 & 4 & 90 & 480 & 680 & \\
  1 & 35 & 350 & 1400 & 2380 & 1428 
  \end{array}\right].$$
  }
  \caption{$h$- and $f$-triangles for $\Delta^{(3,1)}_{\Dyck_n}$ and $n=3,4,5,6$}\label{figfc3}
  \end{figure}

\section*{Acknowledgements}
The first author was partially supported by the National Natural Science Foundation of China (Nos.\ 12671388 and 12271222).

\bibliography{ref}
\bibliographystyle{plainurl}

\end{document}